\newcolumntype{B}[1]{>{\centering\arraybackslash}m{#1}}
\DeclareMathOperator{\BV}{BV}
\DeclareMathOperator{\mycurl}{\text{curl}}
\DeclareMathOperator{\mydiv}{\text{div}}
\DeclareMathOperator{\mysheara}{\text{sh$_1$}}
\DeclareMathOperator{\myshearb}{\text{sh$_2$}}
\DeclareMathOperator{\Sym}{\operatorname{Sym}}
\newtheorem{thm}{Theorem}
\newtheorem{lemma}{Lemma}
\newtheorem{rem}{Remark}
\newcommand{\R}{\mathbb{R}}
\begin{document}

\title{\textbf{Unified Models for Second-Order TV-Type Regularisation in Imaging\\ \large{A New Perspective Based on Vector Operators}}}
\author[1]{Eva-Maria Brinkmann}
\author[2]{Martin Burger}
\author[3]{Joana Sarah Grah}
\affil[1]{Applied Mathematics: Institute for Analysis and Numerics, Westf\"alische Wilhelms-Universit\"at M\"unster, Germany\\

\url{e.brinkmann@wwu.de}}
\affil[2]{Department Mathematik, Friedrich-Alexander Universit\"at Erlangen-N\"urnberg, Germany\\

\url{martin.burger@fau.de}}
\affil[3]{Institute for Computer Graphics and Vision, Graz University of Technology, Austria\\

\url{joana.grah@icg.tugraz.at}}

\maketitle
\begin{abstract}
We introduce a novel regulariser based on the natural vector field operations gradient, divergence, curl and shear. 
For suitable choices of the weighting parameters contained in our model it generalises well-known first- and second-order TV-type regularisation methods including TV, ICTV and TGV$^2$ and enables interpolation between them. 
To better understand the influence of each parameter, we characterise the nullspaces of the respective regularisation functionals. 
Analysing the continuous model, we conclude that it is not sufficient to combine penalisation of the divergence and the curl to achieve high-quality results, but interestingly it seems crucial that the penalty functional includes at least one component of the shear or suitable boundary conditions. 
We investigate which requirements regarding the choice of weighting parameters yield a rotational invariant approach. 
To guarantee physically  meaningful reconstructions, implying that conservation laws for vectorial differential operators remain valid, we need a careful discretisation that we therefore discuss in detail.
\end{abstract}


\section{Introduction}
\label{sec:introduction}

\begin{sloppypar}
In the beginning of the 1990s, Rudin, Osher and Fatemi revolutionised image processing and in particular variational methods using sparsity-enforcing terms by introducing total variation (TV) regularisation \cite{ROF}. 
Since then, it has been serving as a state-of-the-art concept for various imaging tasks including denoising, inpainting, medical image reconstruction, segmentation and motion estimation. 
Minimisation of the TV functional, which for $u \in L^1(\Omega)$ is given by 
\begin{equation}
\text{TV}_{\alpha}(u) := \sup\limits_{\substack{\varphi \in C_c^\infty(\Omega,\mathbb{R}^2)\\ ||\varphi||_{\infty} \leq \alpha}} \int_{\Omega} u\ \mydiv(\varphi)\,dx, \tag{TV*}
\label{eq:TV*}
\end{equation} 
provides cartoon-like images with piecewise constant areas that are separated by sharp edges. 
Note that here and in the following $\Omega \subseteq \mathbb{R}^2$ is an open, bounded image domain with Lipschitz boundary and $\alpha > 0$.
With regard to the TV model, it is a well-known fact that there are two major drawbacks inherent in this method: on the one hand solutions typically suffer from a loss of contrast.
On the other hand they often exhibit the so-called 'staircasing-effect', where areas of gradual intensity transitions are approximated by piecewise constant regions separated by sharp edges such that the intensity function along a line profile in 1D is reminiscent of a staircase.
To address the former deficiency, Osher and coworkers proposed the use of Bregman iterations \cite{BregmanIterations}, a semi-convergent iterative procedure that allows for a regain of contrast and details in the recovered images. 
More recently, various debiasing techniques \cite{DebiasingDelledale1,Debiasing,DebiasingDelledale2} have been introduced to compensate for the systematic error of the lost contrast. 
In this paper, we shall however focus on the latter issue. 
To this end, we propose a novel regularisation functional composed of natural vector field operators that is capable of providing solutions with sharp edges and smooth transitions between intensity values simultaneously. 
This approach certainly stands in the tradition of several modified TV-type regularisation functionals that have been contrived to cure the staircasing effect by incorporating penalisation of second-order total variation, which is given by (cf.\ for example \cite{TV2,HigherOrderTV})
\begin{align}
\text{TV}_{\alpha}^2(u) = \sup\limits_{\substack{\varphi \in C_c^\infty(\Omega,\Sym^2(\mathbb{R}^2))\\ ||\varphi||_{\infty} \leq \alpha}} \int_{\Omega} u\ \mydiv^2(\varphi) ~dx.
\tag{TV2*}
\label{eq:TV2*}
\end{align}
Here, $\Sym^2(\mathbb{R}^2)$ denotes the set of second-order symmetric tensor fields on $\mathbb{R}^2$, i.e.\ the set of symmetric $2 \times 2$-matrices. Moreover, for a symmetric $2 \times 2$-matrix $\varphi$, $\mydiv(\varphi) \in C_0^1(\Omega,\mathbb{R}^2)$ and $\mydiv^2(\varphi) \in C_0(\Omega)$ are defined by
\begin{align}
(\mydiv(\varphi))_i &= \sum_{j=1}^2 \frac{\partial \varphi_{ij}}{\partial x_j}, \notag \\
\mydiv^2(\varphi) &= \sum_{i=1}^2 \frac{\partial^2 \varphi_{ii}}{\partial x_i^2} + 2 \sum_{i < j} \frac{\partial^2 \varphi_{ij}}{\partial x_i \partial x_j} = \frac{\partial^2 \varphi_{11}}{\partial x_1^2} + \frac{\partial^2 \varphi_{22}}{\partial x_2^2} + 2 \frac{\partial^2 \varphi_{12}}{\partial x_1 \partial x_2}.
\tag{div2}
\label{eq:first_and_second_order_divergence}
\end{align}
\end{sloppypar}

\begin{sloppypar}
Let us briefly recall the most popular instances of second-order TV-type regularisers in a formal way. Note first that for $u \in W^{1,1}(\Omega)$, the (first-order) total variation functional can be rephrased as
\begin{equation}
\text{TV}(u) = \int_{\Omega} |\nabla u|\,dx, \tag{TV}
\label{eq:TV}
\end{equation}
where here and in the following we always denote by $\nabla u$ the gradient of $u$ in the sense of distributions and by $\vert \cdot \vert$ the Euclidean norm. 
Against the backdrop of \eqref{eq:TV}, Chambolle and Lions \cite{ICTV} proposed to compose regularisers for image processing tasks by coupling several convex functionals of the gradient by means of the infimal convolution, defined for two functionals as
\begin{equation}
J_1(u) \square J_2(u) = \inf\limits_{u_2} J_1(u-u_2) + J_2(u_2). \tag{IC}
\label{eq:IC}
\end{equation}
In particular, they suggested to use a combination of first and second derivatives
\begin{align}
\text{ICTV}_{(\alpha_1,\alpha_0)}(u) = \inf\limits_{u_2 \in W^{2,1}(\Omega)} \alpha_1 \int_{\Omega}  \vert \nabla u - \nabla u_2 \vert ~dx + \alpha_0 \int_{\Omega} \vert \nabla \left( \nabla u_2 \right) \vert ~dx,
\tag{ICTV}
\label{eq:ICTV}
\end{align}
where here and in the following $\alpha_1, \alpha_0 > 0$ and we denote by $\vert \cdot \vert$ the Frobenius norm whenever the input argument is a matrix.
Following this train of thought, Chan, Esedoglu and Park \cite{CEP} proposed another variant of such a composed regularisation functional, namely
\begin{align}
\label{eq:CEP}
\text{CEP}_{(\alpha_1,\alpha_0)}(u) = \inf\limits_{u_2 \in W^{2,1}(\Omega)} \alpha_1 \int_{\Omega}  \vert \nabla u - \nabla u_2 \vert ~dx + \alpha_0 \int_{\Omega} \vert \mydiv\,(\nabla u_2) \vert ~dx.
\tag{CEP}
\end{align}
More recently, Bredies, Kunisch and Pock \cite{TGV} suggested to generalise the TV functional to the higher-order case in a different way. In comparison to the second-order TV functional \eqref{eq:TV2*}, they further constrained the set over which the supremum is taken by imposing an additional requirement on the supremum norm of the divergence of the symmetric tensor field. Thus, they introduced the total generalised variation (TGV) functional, which in the second-order case is given by
\begin{align}
\begin{split}
&\text{TGV}_{(\alpha_1,\alpha_0)}^2(u)
 = \sup_{\varphi \in \mathcal{B}_0} \int_{\Omega} u\ \mydiv^2(\varphi)~dx,\\
&\mathcal{B}_0 = \{ \varphi \in C_c^{\infty}(\Omega,\Sym^2(\mathbb{R}^2)): \Vert \varphi \Vert_\infty \leq \alpha_0,
\Vert \mydiv (\varphi) \Vert_\infty \leq \alpha_1 \}.
\end{split}
\tag{TGV*}
\label{eq:TGV*}
\end{align}
Considering the corresponding primal definition of this functional, we obtain the following unconstrained formulation: 
\begin{equation}
\text{TGV}_{(\alpha_1,\alpha_0)}^2(u) = \inf\limits_{w \in W^{1,1}(\Omega,\mathbb{R}^2)} \alpha_1 \int_{\Omega}  |\nabla u - w| ~dx + \alpha_0 \int_{\Omega} |\mathcal{E} (w)| ~dx. 
\tag{TGV}
\label{eq:TGV}
\end{equation}
In this case one naturally obtains a minimiser for $w$ in the space $BD(\Omega)$ of vector fields of bounded deformation, i.e.\ $w \in L^1(\Omega,\mathbb{R}^2)$ such that the distributional symmetrised derivative $\mathcal{E}(w)$ given by 
\begin{equation*}
\mathcal{E}(w) = \frac{1}{2}\left( \nabla w + \nabla w^T \right)
\tag{symG}
\label{eq:symmetrised_gradient}
\end{equation*}
is a $\Sym^2(\mathbb{R}^2)$-valued Radon measure. 
Note that we will very briefly recall the definition of Radon measures and some related notions in the subsequent section.
Looking closely at the \eqref{eq:TGV} functional, similarities and differences to the other second-order TV-type regularisation functionals introduced so far are revealed:  
all these approaches have in common that they employ the infimal convolution to balance between enforcing sparsity of the gradient of the function $u$ and sparsity of some differential operator of a vector field resembling the gradient of $u$. Thus, they locally emphasise penalisation of either the first- or the second-order derivative information, which will become visually apparent in Section \ref{sec:results}, Figures \ref{fig:resultsComparisonTrui05} and \ref{fig:resultsComparisonTest05}.
As a consequence, in comparison to the original TV regularisation, all the previously recalled second-order models introduce an additional optimisation problem. 
On the other hand, we can already observe a difference between the former two models and the latter approach: while in the ICTV and the CEP functional the gradient respectively the divergence operator is applied to the gradient of $u_2$, the symmetrised derivative in the TGV functional is applied to a vector field $w$ that does not necessarily have to be a gradient field. 
We will come back to this point later on. 
In the course of this paper, we will moreover show that our novel functional, which will be introduced below, can be seen as a generalisation of all aforementioned first- and second-order TV-type models, since for suitable parameter choices we (in the limit) obtain each of these approaches as a special case. 
This way, we do not only shed a new light on the relation of these well-established regularisation functionals and provide a means of interpolating between them, but we will also discuss properties of further second-order TV-type approaches that can be obtained by different weightings between the natural vector field operators our model builds upon.
\end{sloppypar}

\begin{sloppypar}
Let us now introduce our novel approach in more detail. In \cite{SVF}, we proposed a variational model for image compression that was motivated by earlier PDE-based methods \cite{Mainberger_et_al,MainbergerWeickert}: essentially, images are first encoded by performing edge detection and by saving the intensity values at pixels on both sides of the edges and this data is then decoded by performing homogeneous diffusion inpainting. In this context, our key observation was that the encoding step amounts to the search for a suitable image representation by means of a vector field whose non-zero entries are concentrated at the edges of the image to be compressed. Therefore, we conceived a minimisation problem that directly promotes such a sparse vector field $v$ and at the same time guarantees a certain fidelity of the decoded image $u$ to the original image $f$: 
\begin{align*}
&\frac{1}{2} \int_{\Omega} \left( u - f \right)^2 ~dx + \alpha \int_{\Omega} | v | ~dx \rightarrow \min\limits_{u,v} \qquad \text{subject to} \quad \mydiv\left( \nabla u - v \right) = 0,
\end{align*}
or equivalently, defining $w = \nabla u - v$,
\begin{align}
\frac{1}{2} \int_{\Omega} \left( u - f \right)^2 \,dx  + \alpha \int_{\Omega} | \nabla u - w | \,dx + \chi_0 \left( \mydiv(w) \right) \rightarrow  \min\limits_{u,w},
\tag{SVF1} 
\label{eq:SVF1unc}
\end{align}
where $\chi_0$ denotes the characteristic function of the set of divergence-free vector fields $w$.
Figure \ref{fig:SVF_compression} illustrates the sparse vector fields (SVF) method for image compression in an intuitive way. The input image $f$ (Figure \ref{fig:SVF_compression}, left image) is encoded via the two components of the vector field $v$ (second and third image) with the corresponding decoded image $u$ (right image) satisfying $\mydiv(\nabla u - v) = 0$.
\begin{figure}[h]
\captionsetup[subfigure]{labelformat=empty}
\centering
\subfloat[Original image $f$]{\includegraphics[height=2.75cm]{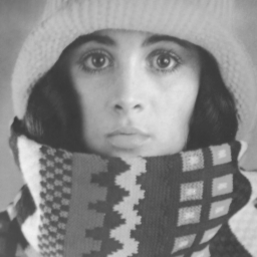}}\hfill
\subfloat[$v$ in $x_1$-direction]{\includegraphics[height=2.75cm]{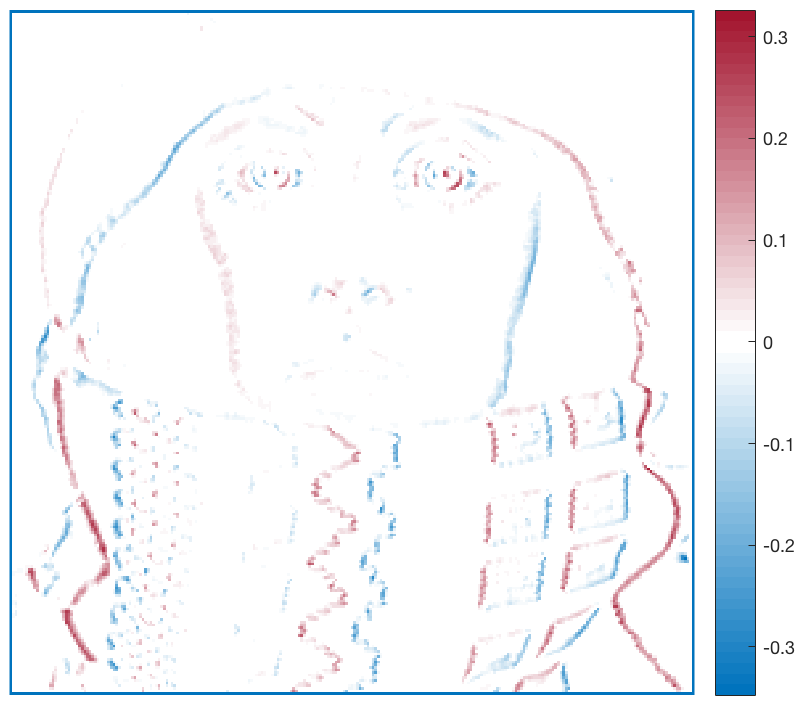}}\hfill
\subfloat[$v$ in $x_2$-direction]{\includegraphics[height=2.75cm]{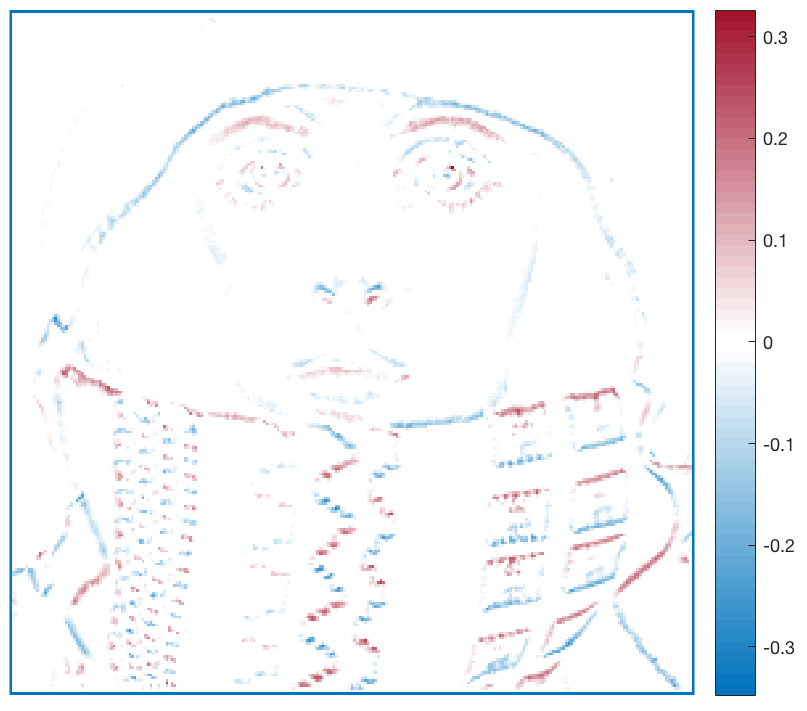}}\hfill
\subfloat[$|v|$]{\includegraphics[height=2.75cm]{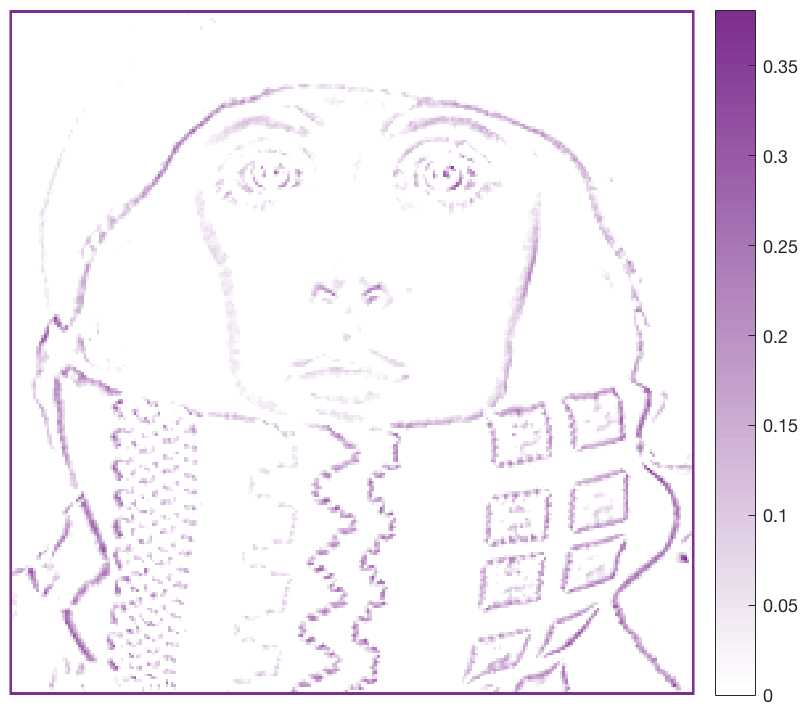}}
\hfill
\subfloat[Decoded image $u$]{\includegraphics[height=2.75cm]{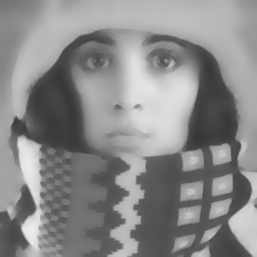}}
\caption{Illustration of the SVF image compression approach \eqref{eq:SVF1unc} for $\alpha = \frac{1}{15}$}
\label{fig:SVF_compression}
\end{figure}
\noindent Looking at these results, we concluded that the support of $v$ (fourth image) indeed corresponds well to an edge indicator, confirming the relation to \cite{Mainberger_et_al,MainbergerWeickert}. Moreover, we observed that on the one hand, our method preserves the main edges well while on the other hand, the decoded images (cf.\ Figure \ref{fig:SVF_compression}, right) exhibit a higher spatial smoothness in comparison to the original input images (cf.\ Figure \ref{fig:SVF_compression}, left). Since this increased smoothness did not come along with characteristic artefacts like the staircasing-effect in case of the TV regularisation, this seemed attractive for further reconstruction tasks. Therefore, we already back then considered the SVF model for homogeneous diffusion inpainting-based image denoising. In order to obtain higher flexibility, we reformulated the minimisation problem to
\begin{align}
\frac{1}{2} \int_{\Omega} (u - f)^2 \,dx + \alpha  \int_{\Omega} | \nabla u - w | ~dx + \alpha\sqrt{\beta} \, \int_{\Omega} | \mydiv(w)| ~dx \rightarrow  \min\limits_{u,w}
\tag{SVF}
\label{eq:SVF}
\end{align}
with $\beta>0$. In this form the \eqref{eq:SVF} model reveals strong similarities to the \eqref{eq:CEP} model with the only difference that $w$ does not necessarily have to be a gradient field. 
However, we had to realise that the denoising performance of this model was not convincing, since point artefacts were created at reasonable choices of the regularisation parameter (cf.\ \cite[Fig.\ 5]{SVF}). In particular, these point artefacts are also apparent in the second image of Figure \ref{fig:sparseDiffOp} in Section \ref{subsec:diffop2dvf}. As we will elaborate on in greater detail in Section \ref{sec:diffop}, these artefacts are indeed inherent in this method. Against the backdrop of the Helmholtz decomposition theorem, stating that every vector field can be orthogonally decomposed into one divergence-free component and a second curl-free one, we proposed in \cite{SVF} to extend the SVF model by incorporating penalisation of the curl of $w$. 
However, as we shall dwell on in Section \ref{sec:unifiedmodel}, such an extended model still had not yet provided satisfactory results, since the point artefacts could indeed be reduced, but were still visible. Hence, we concluded that further adjustments to our model were needed. Inspired by the idea to combine penalisations of divergence, curl and shear to regularise motion flow fields \cite{Schnoerr}, we eventually contrived the following image denoising model, which (dependent on the weights chosen) enforces a joint vector operator sparsity (VOS) of divergence, curl and the two components of the  shear:
\begin{align}
\frac{1}{2} \int_{\Omega} (u - f)^2 \,dx + \alpha 
\int_{\Omega} | \nabla u - w | \,dx + \alpha \bigints_{\Omega} \begin{vmatrix}\sqrt{\beta_1}\,\mycurl(w)\\ \sqrt{\beta_2}\,\mydiv(w)\\ \sqrt{\beta_3}\,\mysheara(w)\\ \sqrt{\beta_4}\, \myshearb(w)
\end{vmatrix} \,dx \rightarrow  \min\limits_{u,w},
\tag{VOS}
\label{eq:VOS}
\end{align}
where $\alpha>0$ is a regularisation parameter in the classical sense while the $\beta_i>0$ are determining the specific form of the regularisation functional.
 
In this paper, we will show results for image denoising, but similar to existing TV-type regularisers our novel approach is not limited to this field of application, but can rather be used as a regulariser for a large variety of image reconstruction problems. To apply the \eqref{eq:VOS} model in the context of a different imaging task, the squared 
$L^2$-norm would have to be replaced by a suitable distance measure $D(Au,f)$, where $A$ denotes the bounded linear forward operator between two Banach spaces corresponding to the reconstruction problem to be solved. The fidelity term $D(Au,f)$ would have to be chosen in dependence on the expected noise characteristics and specific application as it is common practice in variational modelling (cf. \cite{TVZoo,daspaper}). However, for the sake of simplicity and to provide a good intuition for the effects of our novel regulariser on the reconstruction result, we will adhere to image denoising for the remainder of this paper.
\end{sloppypar}

\begin{sloppypar}
To summarise our contributions, we provide a way of looking at well-established TV-type regularisation methods from a new angle. 
We introduce a functional that generalises both our model presented in \cite{SVF} and the methods discussed above, formulated by applying sparsity constraints to common natural differential vector field operators. 
In contrast to improving state-of-the-art imaging methods, we rather focus on a sound mathematical analysis of our regulariser incorporating analysis of the nullspaces, which allows us to draw conclusions on optimal parameter combinations. 
Even more, we investigate under which conditions imposed on the weighting parameters we obtain rotational invariance. 
We also show that we can yield competitive denoising results sharing the ability of second-order models to reconstruct sharp edges and smooth intensity transitions simultaneously. 
Moreover, we highlight the fact that our model is able to interpolate between \eqref{eq:ICTV} and \eqref{eq:TGV} by only modifying one parameter. 
We also include a discussion on our discretisation, which is different from the one for the latter models, but has its own merits with respect to compliance with conservation laws.

Particularly, the remainder of this paper is organised as follows: 
In the subsequent section we very briefly recall some notions in the context of Radon measures relevant for the further course of this work. 
Afterwards, exact definitions of the differential operators included in the \eqref{eq:VOS} model will be stated in Section \ref{sec:diffop}. 
We will investigate both theoretically and practically how regularisation where only one $\beta_i$ is non-zero affects image reconstruction. 
In fact, all of the four resulting cases will involve certain characteristic artefacts that can be rigorously explained by studying the corresponding nullspaces of the regulariser. As we will show in Section \ref{sec:unifiedmodel}, the VOS model is indeed capable of producing denoising results with sharp edges and smooth transitions between intensity values simultaneously at suitable choices of the weighting parameters. 
Even more, a rigorous discussion and analysis of this model will reveal further properties and will pave the way for the insight that our novel approach is a means of unifying the well-known first- and second-order TV-type models introduced above and as such it naturally offers possibilities for interpolation between them. In Section \ref{sec:discretisation}, the discretisation of our model is explained in detail, as it is not straightforward to choose due to the various vector field operators involved. We compare our specific type of discretisation with the one in \cite{TGV} and justify our choice by showing that we comply with various conservation laws. In Section \ref{sec:results}, we briefly discuss the numerical solution of our model, compare the best result we can obtain to state-of-the-art methods illustrating that the proposed approach can indeed compete with those of existing second-order TV-type models. We furthermore present statistics on how various parameter combinations affect reconstructions with respect to different quality measures. We conclude the paper with a summary of our findings and future perspectives in Section \ref{sec:conclusion}.
\end{sloppypar}

\section{Preliminaries}
\label{sec:prelim}
\begin{sloppypar}
In the previous section we have introduced the total variation of a function $u \in L^1(\Omega)$ as
\begin{equation*}
\text{TV}(u) = \sup\limits_{\substack{\varphi \in C_c^\infty(\Omega,\mathbb{R}^2)\\||\varphi||_{\infty} \leq 1}} \int_{\Omega} u\ \mydiv(\varphi)\,dx.
\end{equation*}
On this basis one defines the space of functions of bounded variation by
\begin{equation*}
BV(\Omega) = \lbrace u \in L^1(\Omega): \text{TV}(u) < \infty \rbrace,
\end{equation*}
which equipped with the norm 
\begin{equation*}
\Vert u \Vert_{\BV} = \Vert u \Vert_1 + \text{TV}(u)
\end{equation*}
constitutes a Banach space. 
It is a well-known fact (cf. e.g. \cite{aubert_kornprobst_Mathematical_Problems_in_Image_Processing}, Chapter 2) that for $u \in BV(\Omega)$ the distributional gradient $\nabla u$ of $u$ can be identified with a finite vector-valued Radon measure, which can be characterised in the following way (cf. e.g. \cite{ambrosio_et_al_Functions_of_Bounded_Variation}, Chapter 1): 
Let $\mathcal{B}(\Omega)$ denote the Borel $\sigma$-algebra generated by the open sets in $\Omega$. 
Then we call a mapping $\mu\colon \mathcal{B}(\Omega) \rightarrow \mathbb{R}^d$, $d \geq 1$, an $\mathbb{R}^d$-valued, finite Radon measure if $\mu(\emptyset) = 0$ and $\mu$ is $\sigma$-additive, i.e. for any sequence $(A_n)_{n \in \mathbb{N}}$ of pairwise disjoint elements of $\mathcal{B}(\Omega)$ it holds that $ \mu\left( \bigcup_{n = 1}^\infty A_n \right) = \sum_{n =1}^\infty \mu(A_n) $. Moreover, we denote the space of $\mathbb{R}^d$-valued finite Radon measures by
\begin{align*}
\mathcal{M}(\Omega,\mathbb{R}^d) = \lbrace \mu\colon\mathcal{B}(\Omega) \rightarrow \mathbb{R}^d:\mu \text{ is }\mathbb{R}^d\text{-valued, finite Radon measure} \rbrace.
\end{align*}
By means of the Riesz-Markov representation theorem the space of the $\mathbb{R}^d$-valued finite Radon measures can be identified with the dual space of $C_0(\Omega,\mathbb{R}^d)$ under the pairing
\begin{equation*}
(\varphi,\mu) = \sum_{i=1}^d \int_{\Omega} \varphi_i \, d\mu_i \quad \text{ for } \varphi \in C_0(\Omega,\mathbb{R}^d).
\end{equation*}
Consequently, we equip the space of the $\mathbb{R}^d$-valued finite Radon measures with the dual norm
\begin{align*}
\Vert \mu \Vert_{\mathcal{M}(\Omega,\mathbb{R}^d)} = \sup\limits_{\substack{\varphi \in C_0(\Omega,\mathbb{R}^d)\\||\varphi||_{\infty} \leq 1}} \vert (\varphi,\mu) \vert = \sup\limits_{\substack{\varphi \in C_0(\Omega,\mathbb{R}^d)\\||\varphi||_{\infty} \leq 1}} \sum_{i=1}^d \int_{\Omega} \varphi_i \, d\mu_i
\end{align*}
yielding a Banach space structure for $\mathcal{M}(\Omega,\mathbb{R}^d)$. 
Now taking into account that for $u \in BV(\Omega)$ the distributional gradient is a finite $\mathbb{R}^2$-valued Radon measure we can consider
\begin{equation*}
\Vert \nabla u \Vert_{\mathcal{M}(\Omega,\mathbb{R}^2)} = \sup\limits_{\substack{\varphi \in C_0(\Omega,\mathbb{R}^2)\\||\varphi||_{\infty} \leq 1}} \vert (\varphi,\nabla u) \vert.
\end{equation*}
By the density of the space of test functions $C_c^\infty(\Omega)$ in $C_0(\Omega)$, we moreover obtain the following identity:
\begin{align*}
\Vert \nabla u \Vert_{\mathcal{M}(\Omega,\mathbb{R}^2)} = \sup\limits_{\substack{\varphi \in C_c^\infty(\Omega,\mathbb{R}^2)\\||\varphi||_{\infty} \leq 1}} \vert (\varphi,\nabla u) \vert = \sup\limits_{\substack{\varphi \in C_c^\infty(\Omega,\mathbb{R}^2)\\||\varphi||_{\infty} \leq 1}} \int_{\Omega}u \mydiv(\varphi) \, dx = \text{TV}(u),
\end{align*}
where the second equality results from the definition of the distributional gradient.
We thus see that for $u \in BV(\Omega)$ its total variation equals just the Radon norm of its distributional gradient. 
For this reason an alternative approach towards the definition of the space of bounded variation characterises functions $u \in L^1(\Omega)$ as elements of $BV(\Omega)$ if their distributional gradient is representable by a finite $\mathbb{R}^d$-valued Radon measure. 
However, there also exists a dissimilarity between $\Vert \nabla u \Vert_{\mathcal{M}(\Omega,\mathbb{R}^2)}$ and $\text{TV}(u)$: while by its characteristic as a norm the former can only attain values in $\left[0,\infty\right)$, the latter can not only be defined for functions in $BV(\Omega)$, but also for any function in $L^1(\Omega)$, since it can equal infinity. We will come back to this point shortly.
\end{sloppypar}

\begin{sloppypar}
In view of the previously summarised insights it seems natural to implement the infimal convolution to balance between enforcing sparsity of the distributional gradient of $u$ and some differential operator of a finite $\mathbb{R}^d$-valued Radon measure $w$ resembling $\nabla u$ by means of Radon norms. 
In the following, we will thus pursue this approach.
In doing so, we however will slightly abuse notation by extending the Radon norm to a broader class of generalised functions similar to $\text{TV}$ that is defined for a broader class of functions than the actual Radon norm of the distributional gradient. Here, we will adhere to the notation of the Radon norm and just set it to infinity whenever the argument is no finite $\mathbb{R}^d$-valued Radon measure, but only an element of the more general class of distributions.
\end{sloppypar}


\section{Natural Differential Operators on Vector Fields}
\label{sec:diffop}
In Section \ref{sec:introduction} we recalled the \eqref{eq:SVF} model for image denoising and already mentioned that due to point artefacts the obtained denoising results were unsatisfactory. 
Nevertheless, we decided to adhere to the idea of realising penalisation of second-order derivative information by applying natural vector operators to a two-dimensional vector field $w$ resembling the gradient of $u$. 
Against the backdrop of the Helmholtz respectively the Hodge decomposition theorem and inspired by the work of Schn\"orr \cite{Schnoerr}, the differential operators we are going to consider besides the divergence are the curl and the two components of the shear.   
In this section, we first give precise definitions of these operators in 2D. 
In a next step, we then reexamine the SVF model and moreover consider three alternatives, where the divergence operator is replaced by one of the aforementioned natural vector operators, namely the curl respectively one component of the shear. 
We show denoising results for the respective models revealing that each regulariser leads to very distinct artefacts that we can explain rigorously by analysing the corresponding nullspaces.

\subsection{Differential Operators on 2D Vector Fields}\label{subsec:diffop2dvf}
The curl is traditionally defined for three-\-di\-men\-sio\-nal vector fields and there is no unique way to define it in two dimensions. We chose the following definition of the curl of a 2D vector field $z$:
\begin{align}
\mycurl(z) = \frac{\partial z_2}{\partial x_1} - \frac{\partial z_1}
{\partial x_2}.
\tag{curl}
\label{eq:defCurl}
\end{align}
The definition of the divergence is well-known and is given as
\begin{align}
\mydiv(z) = \frac{\partial z_1}{\partial x_1} + \frac{\partial z_2}{\partial x_2}.
\tag{div}
\label{eq:defDiv}
\end{align}
As mentioned in Section \ref{sec:introduction}, incorporating the shear as a component of a sparse regulariser for vector fields has first been introduced by Schn\"orr in \cite{Schnoerr}. It consists of two components, each of which we consider separately. Their definitions also differ slightly in the literature and we decided to choose the following two:
\begin{align}
\mysheara(z) &= \frac{\partial z_2}{\partial x_2} - \frac{\partial z_1}{\partial x_1};
\tag{sh1}
\label{eq:defShear1} \\
\myshearb(z) &= \frac{\partial z_1}{\partial x_2} + \frac{\partial z_2}{\partial x_1}.
\tag{sh2}
\label{eq:defShear2}
\end{align}
\subsection{Sparsity of Scalar-Valued Natural Differential Operators}
\begin{figure}[h]
\captionsetup[subfigure]{labelformat=empty}
\centering
\subfloat[Sparse curl]{\includegraphics[height=3.75cm]{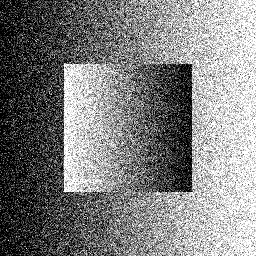}}\hfill
\subfloat[Sparse div]{\includegraphics[height=3.75cm]{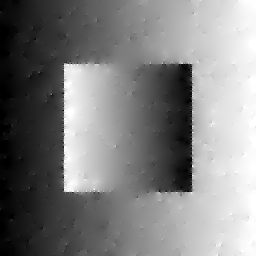}}\hfill
\subfloat[Sparse sh$_1$]{\includegraphics[height=3.75cm]{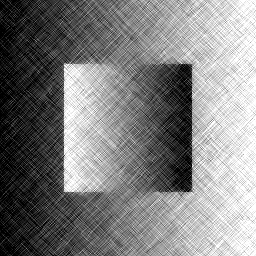}}\hfill
\subfloat[Sparse sh$_2$]{\includegraphics[height=3.75cm]{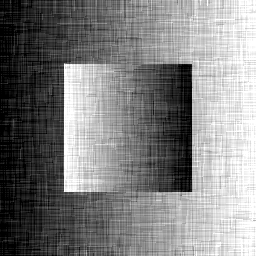}}
\caption{Reconstruction of piecewise affine test image using \eqref{eq:SparseDiffOp} for different vector operators $S$}
\label{fig:sparseDiffOp}
\end{figure}
In Figure \ref{fig:sparseDiffOp}, we can see how enforcing sparsity of one of the four different aforementioned scalar-valued natural vector operators applied to the vector field $w$ in \eqref{eq:SVF} changes the reconstruction $u$. More precisely, we consider the model
\begin{align}
\frac{1}{2} \int_{\Omega} (u - f)^2 \,dx + \inf\limits_{w \in {\cal M}(\Omega,\mathbb{R}^2)} \alpha \Vert \nabla u - w \Vert_{\mathcal{M}(\Omega,\mathbb{R}^2)} + \alpha\sqrt{\beta} \Vert S(w)  \Vert_{\mathcal{M}(\Omega)} \rightarrow \min\limits_{u \in L^2(\Omega)},
\tag{gSVF}
\label{eq:SparseDiffOp}
\end{align}
where $S$ corresponds to one of the vector field operators defined in \eqref{eq:defCurl} - \eqref{eq:defShear2}. Here and in the following we will slightly abuse notation and write derivatives of the measure $w$, which are however to be interpreted in a distributional sense. 
We first identify $S(w)$ with the linear functional
$$ \varphi \in C_0^\infty(\Omega) \mapsto  \int_\Omega S^*\varphi(x) \cdot dw. $$
If this linear functional is bounded in the predual space of $\mathcal{M}(\Omega)$, the space of continuous functions with compact support, then we can identify it with a Radon measure $S(w)$ and define  $\Vert S(w)  \Vert_{\mathcal{M}(\Omega)} $, otherwise we set it to infinity.


In order to understand the appearance of artefacts as above, it is instructive to study the nullspaces of the differential operators, as the following lemma shows, providing a result similar to \cite{GroundStates}:
\begin{lemma} \label{artefactlemma0}
Let $R: L^2(\Omega) \rightarrow \R \cup \{+\infty\}$ be a convex absolutely one-homogeneous functional, i.e.\ $R(c u) = \vert c \vert R(u)\ \forall c \in \R$. Then for each $u_0 \in L^2(\Omega)$ with $R(u_0) = 0$ we have 
\begin{equation}
	R(u+u_0) = R(u), \qquad \forall~ u \in L^2(\Omega).
\end{equation}
Moreover, let $f=f_0+g$ with $R(f_0)=0$ and $\int_\Omega f_0 g ~dx = 0$. Then the minimiser $\hat u$ of 
\begin{equation} \label{eq:Rvariational}
	\frac{1}2 \Vert u -f \Vert^2 + \alpha R(u) \rightarrow \min_{u \in L^2(\Omega)}
\end{equation}
is given by $ \hat u =  f_0 + u_*$ with 
$\int_\Omega f_0 u_*~dx = 0$ and
\begin{align*}
\Vert u_* - g \Vert_{2} \geq \min\{\alpha \lambda_0, \Vert g \Vert_{2} \}, \quad R(u_*) \leq R(g) - \frac{1}{2\alpha} \min\{\alpha \lambda_0, \Vert g \Vert_{2} \}^2,
\end{align*}
where $\lambda_0$ is the smallest positive eigenvalue of $R$. 
\end{lemma}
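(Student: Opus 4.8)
The plan is to run the argument in two stages: first establish the translation invariance of $R$ along its nullspace, then exploit it to reduce the variational problem to a cleaner one and read off the quantitative bounds from the optimality condition.

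For the invariance I would first record that an absolutely one-homogeneous convex functional is subadditive: for any $u,v$ one has $R(u+v)=2R\big(\tfrac12 u+\tfrac12 v\big)\le R(u)+R(v)$, using one-homogeneity with factor $2$ and then convexity. Given $u_0$ with $R(u_0)=0$, subadditivity yields $R(u+u_0)\le R(u)+R(u_0)=R(u)$, and applying the same inequality to $u=(u+u_0)+(-u_0)$ together with $R(-u_0)=|-1|R(u_0)=0$ gives the reverse inequality. Hence $R(u+u_0)=R(u)$, which is the first assertion.

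Next I would substitute $u=f_0+v$ in \eqref{eq:Rvariational}. Since $R(f_0)=0$, the invariance gives $R(f_0+v)=R(v)$, and because $\int_\Omega f_0 g\,dx=0$ the fidelity becomes $\tfrac12\Vert v-g\Vert^2$; thus the functional equals $\tilde J(v):=\tfrac12\Vert v-g\Vert_2^2+\alpha R(v)$, which is strictly convex (strong convexity of the quadratic part) and coercive, so it has a unique minimiser $u_*$, and $\hat u=f_0+u_*$. To see $\int_\Omega f_0 u_*\,dx=0$, decompose $v=t\,f_0/\Vert f_0\Vert+v'$ with $v'\perp f_0$; invariance gives $R(v)=R(v')$ and the orthogonality of $g$ to $f_0$ gives $\Vert v-g\Vert^2=t^2+\Vert v'-g\Vert^2$, so $\tilde J(v)=\tfrac12 t^2+\tilde J(v')\ge\tilde J(v')$ and the minimiser must have $t=0$. (I would take $f_0$ to be the $L^2$-projection of $f$ onto $\ker R$, so that in fact $g\perp\ker R$; this is what the quantitative bound below needs.)

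For the estimates I would use the optimality condition $g-u_*=\alpha p$ with $p\in\partial R(u_*)$, and the two standard facts for subgradients of a one-homogeneous $R$: $\langle p,u_*\rangle=R(u_*)$ and $\langle p,w\rangle\le R(w)$ for all $w$ (i.e.\ $p\in\partial R(0)$), which also forces $p\perp\ker R$. Then
\[
R(g)-R(u_*)\ge\langle p,g\rangle-\langle p,u_*\rangle=\langle p,g-u_*\rangle=\alpha\Vert p\Vert_2^2=\tfrac1\alpha\Vert u_*-g\Vert_2^2,
\]
so $R(u_*)\le R(g)-\tfrac1\alpha\Vert u_*-g\Vert_2^2$. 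The remaining, and genuinely delicate, point is the lower bound $\Vert u_*-g\Vert_2=\alpha\Vert p\Vert_2\ge\min\{\alpha\lambda_0,\Vert g\Vert_2\}$. Here I would distinguish cases: if $u_*=0$ then $\Vert u_*-g\Vert_2=\Vert g\Vert_2$ and we are done; if $u_*\neq0$ then (as $g\perp\ker R$ forces $u_*\perp\ker R$) we have $R(u_*)>0$, and Cauchy--Schwarz together with $\langle p,u_*\rangle=R(u_*)$ gives $\Vert p\Vert_2\ge R(u_*)/\Vert u_*\Vert_2\ge\lambda_0$, where I take $\lambda_0=\inf\{R(v)/\Vert v\Vert_2: v\perp\ker R,\ v\neq0\}$ — which for such functionals coincides with the smallest positive eigenvalue in the sense $\lambda_0 v\in\partial R(v)$, since any Lagrange multiplier for the constraint lies in $\ker R$ yet subgradients are orthogonal to $\ker R$. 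Combining, $\Vert u_*-g\Vert_2\ge\min\{\alpha\lambda_0,\Vert g\Vert_2\}$, and substituting this into the energy inequality yields $R(u_*)\le R(g)-\tfrac1\alpha\min\{\alpha\lambda_0,\Vert g\Vert_2\}^2\le R(g)-\tfrac1{2\alpha}\min\{\alpha\lambda_0,\Vert g\Vert_2\}^2$. I expect the identification of $\lambda_0$ with the Rayleigh-quotient infimum and the case distinction closing the $\alpha\lambda_0$ branch to be the main obstacle; everything else is bookkeeping with the one-homogeneity of $R$.
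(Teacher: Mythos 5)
Your proof is correct and follows the same overall skeleton as the paper (triangle inequality from convexity plus homogeneity for the nullspace invariance, then an orthogonal splitting of the fidelity term to reduce to the problem with data $g$), but three of your sub-arguments genuinely differ. First, you obtain $\int_\Omega f_0 u_*\,dx=0$ by a direct energy-splitting argument ($\tilde J(v)=\tfrac12 t^2+\tilde J(v')$ forces $t=0$), whereas the paper derives it from the optimality condition $u_*=g+\alpha p_*$ together with $\int_\Omega p_* (\pm f_0)\,dx\le R(\pm f_0)=0$; both work, yours is slightly more elementary. Second, you prove the lower bound $\Vert u_*-g\Vert_2\ge\min\{\alpha\lambda_0,\Vert g\Vert_2\}$ from scratch via the case distinction $u_*=0$ versus $u_*\neq 0$ and the Cauchy--Schwarz estimate $\Vert p\Vert_2\ge R(u_*)/\Vert u_*\Vert_2\ge\lambda_0$; the paper simply cites \cite[Section 6]{GroundStates} for this, so your version buys self-containedness at the cost of having to justify the identification of $\lambda_0$ with the Rayleigh-quotient infimum (which is exactly the content of the cited reference, so this is acceptable). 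Third, for the decrease of the regulariser you use the subgradient inequality $R(g)-R(u_*)\ge\langle p,g-u_*\rangle=\tfrac1\alpha\Vert u_*-g\Vert_2^2$, which is actually sharper by a factor of two than the paper's energy comparison $\tfrac12\Vert u_*-g\Vert_2^2+\alpha R(u_*)\le\alpha R(g)$, and you then weaken it to match the stated constant. Finally, you correctly flag that the quantitative bounds require $g$ to be orthogonal to the whole nullspace $\mathcal{N}(R)$ rather than merely to $f_0$ (otherwise a nullspace component hidden in $g$ makes the lower bound false); the paper leaves this implicit, but it is consistent with how the lemma is later applied, where $f_0$ is taken as the $L^2$-projection of $f$ onto $\mathcal{N}(R_{\bm\beta})$.
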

\begin{proof}
Convexity and positive homogeneity imply a triangle inequality, hence
\begin{equation*}
R(u) - R(-u_0) \leq R(u+u_0) \leq R(u) + R(u_0), 
\end{equation*}
and since $R(u_0) = R(-u_0) = 0$, we conclude $R(u+u_0) = R(u)$.

Now consider the variational model \eqref{eq:Rvariational} and write $u=cf_0 +v$ with 
$\int_\Omega v f_0~dx = 0$. Then we have
\begin{align*}
\frac{1}2 \Vert u - f\Vert_2^2 + \alpha R(u) = \frac{1}2 \Vert (c-1) f_0 \Vert_2^2 + \frac{1}2 \Vert v - g \Vert_2^2 + \alpha R(v).
\end{align*}
The first term is minimised for $c=1$ and the second for $v = u_*$ with $u_*$ being the solution of
\begin{equation*}
\frac{1}2 \Vert u - g \Vert_2^2 + \alpha R(u) \rightarrow \min_{u \in L^2(\Omega)}.
\end{equation*}
It remains to verify that indeed
$\int_\Omega u_* f_0~dx = 0$.
Since the Fr\'echet subdifferential of the functional to be minimised is the sum of the Fr\'echet derivative of the first term and the subdifferential of the regularisation term (cf.\ e.g.\ \cite[Theorem 23.8]{rockafellar}), the solution $u_*$ satisfies the optimality condition $u_* = g + \alpha p_*$ for $p_* \in \partial R(u_*)$. We refer to \cite[Chapter I, Section 5]{EkelandTemam} for a formal definition of the subdifferential.
Since by definition of a subgradient of $R$
\begin{align*}
\int_\Omega p_* f_0 ~dx \leq R(f_0) = 0, \quad \int_\Omega p_* (-f_0) ~dx \leq R(-f_0) = 0 ,
\end{align*}
we obtain the orthogonality relation because 
$\int_\Omega g f_0~dx = 0$. The lower bound on $\Vert u_* -g \Vert_2$ follows from a result in \cite[Section 6]{GroundStates}, the upper bound on the regularisation follows from combining this estimate with
\begin{equation*}
\frac{1}2 \Vert u_* - g \Vert_2^2 + \alpha R(u^*) \leq \alpha R(g),
\end{equation*}
which is due to the fact that $u_*$ is a minimiser of the functional with data $g$. 
\end{proof}

\begin{sloppypar}
Lemma \ref{artefactlemma0} has a rather intuitive interpretation: while the nullspace component with respect to $R$ in the signal is unchanged in the reconstruction, the part orthogonal to the nullspace is changed. Indeed this part is shrunk in some sense, $u_*$ has a smaller value of the regularisation functional than $g$. Hence, when rescaling the resulting image for visualisation, the nullspace component is effectively amplified. As a consequence, we proceed to a study of nullspaces for the different models 
with
\begin{align*}
R(u) = \inf_{w \in \mathcal{M}(\Omega,\mathbb{R}^2)} \Vert \nabla u - w \Vert_{\mathcal{M}(\Omega,\mathbb{R}^2)} + \sqrt{\beta} \Vert S(w) \Vert_{\mathcal{M}(\Omega)}.
\end{align*}
\end{sloppypar}

\begin{itemize}

\item Let $S = \mycurl$ and choose $u \in C^2(\Omega)$, then we can set $w= \nabla u$ and since the curl of the gradient vanishes, we obtain the infimum at zero. By a density argument $R$ vanishes on
$L^2(\Omega)$. Hence, Lemma \ref{artefactlemma0} with $g =0 $ shows that the data $f$ are exactly reconstructed by $\hat u$. 

\begin{sloppypar}
\item  Let $S = \mydiv$, which exactly resembles \eqref{eq:SVF}, and we can observe the point artefacts described above (cf. Figure \ref{fig:sparseDiffOp}, second image). Those are more difficult to be understood from the nullspace, which consists of harmonic functions ($w = \nabla u,\ \mydiv(w) = 0$). The latter is less relevant however for discontinuous functions, which are far away from harmonic ones. We rather expect to have a divergence of $w$ being sparse, i.e.\ a linear combination of Dirac $\delta$-distributions. Hence, with this structure of $\Delta u = \mydiv(w)$ the resulting $u$ would be the sum of a harmonic function and a linear combination of fundamental solutions of the Poisson equation, which exhibits a singularity at its centre in two dimensions. This singularity corresponds to the visible point artefacts.
\end{sloppypar}

\item With $S = \mysheara$, we observe a stripe-like texture pattern in diagonal directions. Here, $w=\nabla u$, $\mysheara(w) = 0$ yields a wave equation $\frac{\partial^2 u}{\partial x_1^2} = \frac{\partial^2 u}{\partial x_2^2}$. According to d'Alembert's formula (cf.\ e.g.\ \cite[pp.\ 65--68]{Evans}), the latter is solved by functions of the form 
$u=U(x_1+x_2) + V(x_1-x_2)$, which corresponds exactly to structures along the diagonal.

\item The artefacts in the case $S = \myshearb$ look similar, but the stripe artefacts are parallel to the $x_1$- and $x_2$-axes. Now the nullspace is characterised by $w=\nabla u,\ \myshearb(w) = 0$, which is equivalent to $\frac{\partial^2 u}{\partial x_1 \partial x_2} = 0$. This holds indeed for $u=U_1(x_1) + U_2(x_2)$, i.e.\ structures parallel to the coordinate axes. 

\end{itemize}

As observed already in the SVF model, we see from the above examples that the functional using any single differential operator has a huge nullspace and will not yield a suitable regularisation in the space of functions of bounded variation. On the other hand, using norms of the symmetric or full gradient as in TGV or ICTV is known to yield a regularisation in this space \cite{HigherOrderTV,TGV}. 
Thus, one may ask which and how many scalar differential operators one should combine to obtain a suitable functional.
In the subsequent section we will deduce an answer to this question, where in the end again a particular focus is laid on the four natural differential operators discussed above.


\section{Unified Model}
\label{sec:unifiedmodel}
\begin{sloppypar}
In view of the insights described in the previous section, we decided to consider a much more general approach, where no longer one natural vector operator is applied to $w$, but instead a general operator $\mathcal{A}$ is applied to the Jacobian of $w$ to penalise second-order derivative information.
We give a rigorous dual definition of the regularisation functional and state the corresponding subdifferential. 
By rephrasing this very general approach appropriately, we are eventually able to show that for a suitable choice of the general operator we can return to a formulation based on a weighted combination of the aforementioned natural vector field operators. 
We analyse the thus obtained model with respect to nullspaces and prove the existence of $BV$ solutions. 
In addition, we unroll that it is indeed justified to call the proposed approach a unified model, since we show that (at least in the limit) we can obtain the well-known second-order TV-type models ICTV, CEP and TGV as well as variations of first-order total variation as special cases.
Finally, we investigate under which conditions the presented approach is rotationally invariant.  
\end{sloppypar}

\subsection{General Second-Order TV-type Regularisations} 
\begin{sloppypar}
In a unified way any of the above regularisation functionals can be written in the form 
\begin{align} \label{eq:Rdefinition0}
R(u) = \inf_{w \in \mathcal{M}(\Omega,\mathbb{R}^2)} \Vert \nabla u - w \Vert_{\mathcal{M}(\Omega,\mathbb{R}^2)} + \Vert {\cal A} \nabla w \Vert_{\mathcal{M}(\Omega,\mathbb{R}^m)}
\end{align}
with a pointwise linear operator $\mathcal{A}: \mathbb{R}^{2 \times 2} \rightarrow \mathbb{R}^m$ independent of $x$ such that $\nabla w(x) \mapsto \mathcal{A}\nabla w(x)$ if $w$ has $C^1$ density, where in the above context $m = 1$. In the general setting we can use the distributional gradient and identify $\mathcal{A}\nabla w$ with the linear form
$$ \varphi \in C_0^\infty(\Omega,\mathbb{R}^m) \mapsto \int_\Omega \mydiv(\mathcal{A}^* \varphi(x)) \cdot dw. $$
We are interested in the case where this linear functional is bounded on the predual space of ${\mathcal{M}(\Omega,\mathbb{R}^m)}$, i.e. the space of continuous vector fields, and thus identify ${\cal A} \nabla w$ with such a vector measure justifying the use of the norm in \eqref{eq:Rdefinition0} 
(see also the equivalent dual definition below). 
Note that for $m < 4$ $\mathcal{A}$ will have a nullspace and hence $\mathcal{A}\nabla w$ being a Radon measure does not imply that $\nabla w$ is a  Radon measure. The product is hence rather to be interpreted as some differential operator $\mathcal{A}\nabla$ applied to the measure $w$ than $\mathcal{A}$ multiplied with $\nabla w$.

In view of \eqref{eq:Rdefinition0}, where as mentioned earlier $m=1$, we can derive a rigorous dual definition starting from 
\begin{align}
\begin{split}
&R(u)=\inf_{w \in {\cal M}(\Omega, \R^2)}  \sup_{(\varphi,\psi) \in {\cal B}_1} \int_\Omega u \mydiv(\varphi) ~dx + \int_\Omega  \varphi \cdot ~dw + \int_\Omega \mydiv ({\cal A}^*\psi) \cdot~dw,\\
&{\cal B}_1 = \{(\varphi, \psi) \in C_c^\infty(\Omega,\mathbb{R}^2) \times C_c^\infty(\Omega) : \Vert \varphi \Vert_\infty \leq 1, \Vert \psi \Vert_\infty \leq 1 \}.
\end{split}
\end{align}
Assuming that we can exchange the infimum and supremum, i.e.
\begin{align*}
R(u)=\sup_{(\varphi,\psi) \in {\cal B}_1} \inf_{w \in \mathcal{M}(\Omega,\mathbb{R}^2)} \int_\Omega u \mydiv(\varphi) ~dx + \int_\Omega  \varphi \cdot ~dw + \int_\Omega  \mydiv ({\cal A}^*\psi) \cdot~dw,
\end{align*}
we see that a value greater than $-\infty$ in the infimum only appears if 
$\varphi + \mydiv ({\cal A}^*\psi) = 0$. Thus, we can restrict the supremum to such test functions, which actually eliminates $w$ and $\varphi$, and  obtain the following formula reminiscent of the TGV-functional \cite{TGV}:
\begin{align} \label{eq:Rdefinition1}
&R(u)= \sup_{\psi \in {\cal B}_1^*} \int_\Omega  u \mydiv^2( {\cal A}^*\psi) ~dx,\\
 \label{eq:B1star}
&{\cal B}_1^* = \{ \psi \in C_c^\infty(\Omega): \Vert \psi \Vert_\infty \leq 1, \Vert \mydiv ({\cal A}^*\psi) \Vert_\infty \leq 1 \}.
\end{align}
\end{sloppypar}

\begin{sloppypar}
We see that there is an immediate generalisation of the above definition when we want to use more than one scalar differential operator for regularising the vector-valued measure $w$, we simply need to introduce 
a pointwise linear operator $\mathcal{A}: \mathbb{R}^{2 \times 2} \rightarrow \mathbb{R}^m$ with $m \geq 1$. 
Then the definition
\eqref{eq:Rdefinition1} remains unchanged if we adapt the admissible set 
\begin{align}\label{eq:B1stargeneralm}
{\cal B}_1^* = \{ \psi \in C_c^\infty(\Omega,\mathbb{R}^m): \Vert \psi \Vert_\infty \leq 1, \Vert \mydiv ({\cal A}^*\psi) \Vert_\infty \leq 1 \}.
\end{align}
\end{sloppypar}

\begin{sloppypar}
Let us provide some analysis of the above formulations. First of all we show that the infimal convolution is exact, i.e. for given $u \in BV(\Omega)$ the infimum is attained for some $\overline{w} \in \mathcal{M}(\Omega,\mathbb{R}^2)$.
\begin{lemma}
Let $u \in BV(\Omega)$, then there exists $\overline{w} \in \mathcal{M}(\Omega,\mathbb{R}^2)$ such that
\begin{align*}
\inf_{w \in \mathcal{M}(\Omega,\mathbb{R}^2)} \Vert \nabla u - w \Vert_{\mathcal{M}(\Omega,\mathbb{R}^2)} + \Vert {\cal A} \nabla w \Vert_{\mathcal{M}(\Omega,\mathbb{R}^m)} = \Vert \nabla u - \overline w \Vert_{\mathcal{M}(\Omega,\mathbb{R}^2)} + \Vert {\cal A} \nabla \overline{w} \Vert_{\mathcal{M}(\Omega,\mathbb{R}^m)}.
\end{align*}
\end{lemma}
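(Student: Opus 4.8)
The plan is to prove existence by the direct method of the calculus of variations, exploiting weak-* compactness of bounded sets in the space of Radon measures together with the weak-* lower semicontinuity of the Radon norm. Writing $F(w) = \Vert \nabla u - w \Vert_{\mathcal{M}(\Omega,\mathbb{R}^2)} + \Vert \mathcal{A} \nabla w \Vert_{\mathcal{M}(\Omega,\mathbb{R}^m)}$ for the functional to be minimised, I would first note that the infimum is finite: the choice $w = 0$ gives $F(0) = \Vert \nabla u \Vert_{\mathcal{M}(\Omega,\mathbb{R}^2)} = \text{TV}(u) < \infty$, since $u \in BV(\Omega)$ and $\mathcal{A}\nabla 0 = 0$. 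Denoting the infimum by $M$, I fix a minimising sequence $(w_n) \subset \mathcal{M}(\Omega,\mathbb{R}^2)$ with $F(w_n) \to M$.

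Next I would extract a priori bounds. For $n$ large we have $F(w_n) \leq M+1$, hence both $\Vert \nabla u - w_n \Vert_{\mathcal{M}(\Omega,\mathbb{R}^2)}$ and $\Vert \mathcal{A} \nabla w_n \Vert_{\mathcal{M}(\Omega,\mathbb{R}^m)}$ are bounded by $M+1$. The triangle inequality then yields $\Vert w_n \Vert_{\mathcal{M}(\Omega,\mathbb{R}^2)} \leq \text{TV}(u) + M + 1$, so $(w_n)$ is bounded in $\mathcal{M}(\Omega,\mathbb{R}^2)$ and $(\mathcal{A}\nabla w_n)$ is bounded in $\mathcal{M}(\Omega,\mathbb{R}^m)$. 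Since these are the duals of the separable spaces $C_0(\Omega,\mathbb{R}^2)$ and $C_0(\Omega,\mathbb{R}^m)$, the Banach--Alaoglu theorem furnishes, along one common subsequence (not relabelled), weak-* limits $w_n \overset{*}{\rightharpoonup} \overline{w} \in \mathcal{M}(\Omega,\mathbb{R}^2)$ and $\mathcal{A}\nabla w_n \overset{*}{\rightharpoonup} \mu \in \mathcal{M}(\Omega,\mathbb{R}^m)$. I emphasise that no bound on $\nabla w_n$ itself is needed -- this would generally fail since $\mathcal{A}$ has a nullspace for $m<4$ -- only the combination $\mathcal{A}\nabla w_n$ is controlled.

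The step I expect to require the most care is the identification $\mu = \mathcal{A}\nabla \overline{w}$, that is, the weak-* closedness of the distributional operator $\mathcal{A}\nabla$. For any $\varphi \in C_c^\infty(\Omega,\mathbb{R}^m)$ the defining pairing gives $\int_\Omega \varphi \cdot d(\mathcal{A}\nabla w_n) = \int_\Omega \mydiv(\mathcal{A}^*\varphi) \cdot dw_n$; since $\mydiv(\mathcal{A}^*\varphi) \in C_c^\infty(\Omega,\mathbb{R}^2) \subset C_0(\Omega,\mathbb{R}^2)$ is a fixed admissible test field, the weak-* convergence $w_n \overset{*}{\rightharpoonup} \overline{w}$ lets me pass to the limit on the right, while the left-hand side converges to $\int_\Omega \varphi \cdot d\mu$. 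Hence $\int_\Omega \varphi \cdot d\mu = \int_\Omega \mydiv(\mathcal{A}^*\varphi) \cdot d\overline{w}$ for all such $\varphi$, which is exactly the statement that $\mathcal{A}\nabla\overline{w}$ is represented by the measure $\mu$.

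Finally I would invoke lower semicontinuity. Both Radon norms are weak-* lower semicontinuous as dual norms; applying this to $\nabla u - w_n \overset{*}{\rightharpoonup} \nabla u - \overline{w}$ and to $\mathcal{A}\nabla w_n \overset{*}{\rightharpoonup} \mu = \mathcal{A}\nabla\overline{w}$, and using $\liminf a_n + \liminf b_n \leq \liminf(a_n + b_n)$, gives
\[
\Vert \nabla u - \overline{w} \Vert_{\mathcal{M}(\Omega,\mathbb{R}^2)} + \Vert \mathcal{A}\nabla\overline{w} \Vert_{\mathcal{M}(\Omega,\mathbb{R}^m)} \leq \liminf_{n} \Vert \nabla u - w_n \Vert_{\mathcal{M}(\Omega,\mathbb{R}^2)} + \liminf_{n} \Vert \mathcal{A}\nabla w_n \Vert_{\mathcal{M}(\Omega,\mathbb{R}^m)} \leq \liminf_{n} F(w_n) = M.
\]
Thus $F(\overline{w}) \leq M$, and since $\overline{w}$ is admissible we also have $F(\overline{w}) \geq M$, so $F(\overline{w}) = M$ and the infimum is attained at $\overline{w}$, as claimed.
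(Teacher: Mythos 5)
Your proposal is correct and follows essentially the same route as the paper: the direct method with a minimising sequence, uniform bounds from the triangle inequality, Banach--Alaoglu for weak-star compactness of both $(w_n)$ and $(\mathcal{A}\nabla w_n)$, identification of the limit of $\mathcal{A}\nabla w_n$ with $\mathcal{A}\nabla\overline{w}$, and weak-star lower semicontinuity of the Radon norms. Your explicit verification of the identification step by testing against $\varphi \in C_c^\infty(\Omega,\mathbb{R}^m)$ and using $\mydiv(\mathcal{A}^*\varphi)$ as an admissible test field is a welcome elaboration of what the paper compresses into \enquote{continuity of the operator $\mathcal{A}\nabla$ in the space of distributions}.
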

\begin{proof}
We consider the convex functional
\begin{align*}
F(w) = \Vert \nabla u - w \Vert_{\mathcal{M}(\Omega,\mathbb{R}^2)} \Vert {\cal A} \nabla w \Vert_{\mathcal{M}(\Omega,\mathbb{R}^m)}.
\end{align*}
First of all $w=0$ is admissible and yields a finite value $F(0) = \Vert \nabla u \Vert_{\mathcal{M}(\Omega,\mathbb{R}^2)} < \infty$, since $u \in BV(\Omega)$. Thus, we can look for a minimiser of $F$ on the set $F(w) \leq F(0)$. For such $w$ the triangle inequality yields the bound
\begin{align*}
\Vert w \Vert_{\mathcal{M}(\Omega,\mathbb{R}^2)} + \Vert {\cal A} \nabla w \Vert_{\mathcal{M}(\Omega,\mathbb{R}^m)} \leq 2 \Vert \nabla u \Vert_{\mathcal{M}(\Omega,\mathbb{R}^2)}.
\end{align*}
In particular, $w$ and $\mathcal{A} \nabla w$ are uniformly bounded in $\mathcal{M}(\Omega,\mathbb{R}^2)$, which consequently also holds for minimising sequences $w_n$ and $\mathcal{A} \nabla w_n$. A standard argument based on the Banach-Alaoglu theorem and the metrisability of the weak-star topology on bounded sets (or alternatively cf.\ \cite[Theorem 1.59]{ambrosio_et_al_Functions_of_Bounded_Variation}) yields the existence of weak-star convergent subsequences $w_{n_k}$ and $\mathcal{A} \nabla w_{n_k}$.
Let $\overline{w} \in \mathcal{M}(\Omega,\mathbb{R}^2)$ denote the limit of the first subsequence $w_{n_k}$. Taking into account the continuity of the operator $\mathcal{A}\nabla$ in the space of distributions, the limit of the second subsequence $\mathcal{A} \nabla w_{n_k}$ equals $\mathcal{A} \nabla \overline{w}$. Then $\overline{w}$ is a minimiser due to the weak-star lower semicontinuity of both summands of $F$.
 \end{proof}
Next, we show the equivalence of the problem formulations in \eqref{eq:Rdefinition0} and \eqref{eq:Rdefinition1}.
\begin{lemma}
The definitions \eqref{eq:Rdefinition0} and \eqref{eq:Rdefinition1} with 
a pointwise linear operator $\mathcal{A}: \mathbb{R}^{2 \times 2} \rightarrow \mathbb{R}^m$ 
are equivalent, i.e.\ for all $u \in BV(\Omega)$ we have
\begin{align*}
\inf_{w \in \mathcal{M}(\Omega,\mathbb{R}^2)} \Vert \nabla u - w \Vert_{\mathcal{M}(\Omega,\mathbb{R}^2)} + \Vert {\cal A} \nabla w \Vert_{\mathcal{M}(\Omega,\mathbb{R}^m)} = \sup_{\psi \in {\cal B}_1^*} \int_{\Omega}  u \, \textup{div}^2( {\cal A}^*\psi) ~dx
\end{align*}
with ${\cal B}_1^*$ given by \eqref{eq:B1stargeneralm}.
\end{lemma}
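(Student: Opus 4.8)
The plan is to establish the asserted identity by proving two inequalities separately. Write $D(u):=\sup_{\psi\in\mathcal{B}_1^*}\int_\Omega u\,\mydiv^2(\mathcal{A}^*\psi)\,dx$ for the right-hand side and $R(u)$ for the infimal convolution on the left. The inequality $D(u)\le R(u)$ is the elementary (weak-duality) direction, which I would obtain by a direct estimate; the reverse $R(u)\le D(u)$ encodes the absence of a duality gap and is where the real work lies.

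For the easy direction I would fix an arbitrary admissible $\psi\in\mathcal{B}_1^*$ and an arbitrary $w\in\mathcal{M}(\Omega,\mathbb{R}^2)$, and set $\varphi:=\mydiv(\mathcal{A}^*\psi)\in C_c^\infty(\Omega,\mathbb{R}^2)$, which by the very definition of $\mathcal{B}_1^*$ satisfies $\Vert\varphi\Vert_\infty\le 1$ alongside $\Vert\psi\Vert_\infty\le 1$. Since $\mydiv^2(\mathcal{A}^*\psi)=\mydiv(\varphi)$, the definition of the distributional gradient together with the splitting $\nabla u=(\nabla u-w)+w$ gives
\begin{align*}
\int_\Omega u\,\mydiv^2(\mathcal{A}^*\psi)\,dx=(\varphi,\nabla u-w)+\int_\Omega\mydiv(\mathcal{A}^*\psi)\cdot dw,
\end{align*}
where, by the defining duality pairing, the last term is precisely the action of $\mathcal{A}\nabla w$ on $\psi$. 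Bounding $(\varphi,\nabla u-w)\le\Vert\varphi\Vert_\infty\,\Vert\nabla u-w\Vert_{\mathcal{M}(\Omega,\mathbb{R}^2)}$ and the second term by $\Vert\psi\Vert_\infty\,\Vert\mathcal{A}\nabla w\Vert_{\mathcal{M}(\Omega,\mathbb{R}^m)}$, and invoking $\Vert\varphi\Vert_\infty,\Vert\psi\Vert_\infty\le 1$, yields the pointwise bound $\int_\Omega u\,\mydiv^2(\mathcal{A}^*\psi)\,dx\le\Vert\nabla u-w\Vert_{\mathcal{M}(\Omega,\mathbb{R}^2)}+\Vert\mathcal{A}\nabla w\Vert_{\mathcal{M}(\Omega,\mathbb{R}^m)}$. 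Taking the infimum over $w$ and then the supremum over $\psi\in\mathcal{B}_1^*$ gives $D(u)\le R(u)$; when $\mathcal{A}\nabla w$ fails to be a measure the right-hand side is $+\infty$ and the estimate is trivial.

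For the reverse inequality I would return to the saddle-point representation displayed before the lemma, $R(u)=\inf_{w}\sup_{(\varphi,\psi)\in\mathcal{B}_1}\Phi(w,\varphi,\psi)$ with $\Phi$ bilinear, and prove that the infimum and supremum may be interchanged. Once this is granted, the inner infimum over all of $\mathcal{M}(\Omega,\mathbb{R}^2)$ equals $-\infty$ unless $\varphi+\mydiv(\mathcal{A}^*\psi)=0$, which restricts the supremum to test functions with $\varphi=-\mydiv(\mathcal{A}^*\psi)$, eliminates $w$ and $\varphi$, converts the constraint $\Vert\varphi\Vert_\infty\le 1$ into $\Vert\mydiv(\mathcal{A}^*\psi)\Vert_\infty\le 1$, and leaves exactly $D(u)$. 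The interchange is the crux and the main obstacle: the primal space $\mathcal{M}(\Omega,\mathbb{R}^2)$ is non-reflexive with an awkward dual, and $\mathcal{A}\nabla$ is unbounded on it, so a naive Sion-type argument does not apply to both factors simultaneously. I would therefore secure the exchange through Fenchel--Rockafellar duality in the perturbation framework of \cite{EkelandTemam}, set up on the predual side of continuous test functions so that $w$ arises as the Lagrange multiplier of the linear relation $\varphi=-\mydiv(\mathcal{A}^*\psi)$; the needed constraint qualification holds because $\psi=0$ lies in the interior of the box constraints defining $\mathcal{B}_1^*$, which excludes a duality gap. The resulting optimal multiplier is consistent with, and can alternatively be extracted from, the attainment of the infimal convolution established in the preceding lemma, whose a~priori bound $\Vert w\Vert_{\mathcal{M}(\Omega,\mathbb{R}^2)}+\Vert\mathcal{A}\nabla w\Vert_{\mathcal{M}(\Omega,\mathbb{R}^m)}\le 2\Vert\nabla u\Vert_{\mathcal{M}(\Omega,\mathbb{R}^2)}$ supplies the weak-$*$ compactness underpinning the exchange.
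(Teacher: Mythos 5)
Your proposal follows essentially the same route as the paper: the identity is obtained from Fenchel--Rockafellar duality set up on the space of continuous test functions, with the measure $w$ emerging as the dual variable of the linear relation $\varphi=-\mydiv(\mathcal{A}^*\psi)$, and the explicit weak-duality estimate you give first is subsumed in the paper's single application of the duality theorem. The one point to tighten is the constraint qualification: since the paper's $G$ is the indicator of $\{0\}$ and hence nowhere continuous, a Slater-type interior-point condition on the box constraints does not apply directly, and the paper instead verifies the Attouch--Brezis condition $Y=\bigcup_{\lambda\ge 0}\lambda(\operatorname{dom}(G)-\Lambda\operatorname{dom}(F))$, which holds precisely because the sup-norm ball constraining $\varphi$ is absorbing.
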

\begin{proof}
The proof follows the line of argument in \cite{bredies2014regularization} (see also \cite{bredies2011inverse}) and is based on a Fenchel duality argument for the formulation, which we already sketched above. For this sake let $R_P$ denote the primal formulation \eqref{eq:Rdefinition0}
and rewrite the dual formulation $R_D$ given in \eqref{eq:Rdefinition1} as
\begin{align*}
R_D(u) = \sup_{\substack{(v_1,v_2) \in X\\\Lambda v = 0}} \int_\Omega u \mydiv(v_1) ~dx - I_1(v_1) - I_2(v_2) ,
\end{align*}
where we use the spaces 
$X = C_0^1(\Omega,\mathbb{R}^2) \times C_0^2(\Omega,\mathbb{R}^m)$, $Y = C_0^1(\Omega,\mathbb{R}^2)$,
the linear operator $\Lambda: X \rightarrow Y$, $\Lambda(v_1,v_2) = -v_1 + \mydiv ({\cal A}^* v_2)$, and the indicator functions
\begin{align*}
I_j(v_j) = \left\{ \begin{array}{ll} 0 & \text{if } \Vert v_j \Vert_\infty \leq 1 \\ + \infty & \text{else,}\end{array} \right. \quad j = 1,2.
\end{align*}
The equivalence of the supremal formulation on these spaces follows from the density of $C_c^\infty(\Omega)$ in $C_0^k(\Omega)$ for any $k$. 
Using the convex functionals $G: Y \rightarrow \R \cup \{+\infty\}$ as the indicator function of the set $\{0\}$ and $F: X \rightarrow \R \cup \{+\infty\}$ given by
\begin{equation*}
F(v) =\int_\Omega (-u \mydiv (v_1) + I_1(v_1) + I_2(v_2)) ~dx,
\end{equation*}
we can further write
\begin{equation*}
R_D(u) =\sup_{(v_1,v_2) \in X } - F(v) - G(\Lambda v).
\end{equation*}
In view of \cite[p.\ 12]{bredies2014regularization} it is straightforward to verify that
\begin{equation*}
Y = \bigcup_{\lambda \geq 0} \lambda (\text{dom}(G) - \Lambda \text{dom}(F)),
\end{equation*}
where $\text{dom}(F) = \{ x \in X : F(x) < \infty \}$ denotes the effective domain, and hence together with the convexity and lower semicontinuity the conditions for the Fenchel duality theorem \cite[Corollary 2.3]{attouch1986duality} are satisfied. Hence,
\begin{align*}
R_D(u) &= \inf_{w \in Y^*} F^*(-\Lambda^* w) + G^*(w)\\
&=\inf_{w \in Y^*} \sup_{(v_1,v_2) \in X} \left\lbrace (-\Lambda^*w,v) + u \mydiv (v_1) - I_1(v_1) - I_2(v_2)\right\rbrace\\
&=\inf_{w \in Y^*} \sup_{\substack{(v_1,v_2) \in X\\ \Vert v_1 \Vert_\infty \leq 1\\ \Vert v_2 \Vert_\infty \leq 1}} \lbrace (w,-\Lambda v) - (\nabla u,v_1) \rbrace\\
&=\inf_{w \in Y^*} \sup_{\substack{(v_1,v_2) \in X\\ \Vert v_1 \Vert_\infty \leq 1\\ \Vert v_2 \Vert_\infty \leq 1}} \left\lbrace (w,v_1 - \mydiv({\cal A}^* v_2))  - (\nabla u,v_1) \right\rbrace\\
&=\inf_{w \in Y^*} \sup_{\substack{(v_1,v_2) \in X\\ \Vert v_1 \Vert_\infty \leq 1\\ \Vert v_2 \Vert_\infty \leq 1}} \lbrace (w - \nabla u,v_1) +({\cal A} \nabla w,v_2) \rbrace\\
&=\inf_{w \in Y^*} \Vert \nabla u - w \Vert_{\mathcal{M}(\Omega,\mathbb{R}^2)} + \Vert {\cal A} \nabla w \Vert_{\mathcal{M}(\Omega,\mathbb{R}^m)},
\end{align*}
where the last conversion results from the definition of the Radon norm. Since $u \in BV(\Omega)$ the above functional only has a finite value for $w \in \mathcal{M}(\Omega,\mathbb{R}^2).$ Hence the infimum in the larger space $Y^*$ equals the infimum in $\mathcal{M}(\Omega,\mathbb{R}^2).$  This yields the assertion.

\end{proof}
\end{sloppypar}

Based on the dual formulation \eqref{eq:Rdefinition1} we can also understand the subdifferential of the absolutely one-homogeneous functional $R$. We see that $p \in \partial R(u)$ if $p =\mydiv^2( {\cal A}^*\psi)$ for $\psi \in  {{\cal B}_1^* }$ and
\begin{equation*}
\int_\Omega p~u~dx = \int_\Omega \mydiv^2( {\cal A}^*\psi)~u~dx = R(u).
\end{equation*}
In general, subgradients will be elements of a larger set, namely a closure of ${\cal B}_1^* $ in $L^\infty(\Omega)$ with the restriction that 
$\mydiv( {\cal A}^*\psi)$ can be integrated with respect to the measure $\nabla u$.

The domain of $R$ and the topological properties introduced are unclear at first glance and depend on the specific choice of ${\cal A}$. However, we can give a general result bounding $R$ by the total variation. 
\begin{lemma} \label{Rseminormlemma}
The functional $R$ is a seminorm on $BV(\Omega)$ and satisfies
$ R(u) \leq \textup{TV}(u) = \vert u  \vert_{BV}$
for all $u \in BV(\Omega)$. 
\end{lemma}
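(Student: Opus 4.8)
The plan is to read off both assertions directly from the infimal-convolution structure of $R$ in \eqref{eq:Rdefinition0}, treating the two summands as sublinear functionals and exploiting the linearity of the maps $u \mapsto \nabla u$ and $w \mapsto \mathcal{A}\nabla w$. I write the objective as $T(u,w) = \Vert \nabla u - w \Vert_{\mathcal{M}(\Omega,\mathbb{R}^2)} + \Vert \mathcal{A}\nabla w \Vert_{\mathcal{M}(\Omega,\mathbb{R}^m)}$, so that $R(u) = \inf_{w} T(u,w)$. Both terms are compositions of a linear operator with a (dual) norm, hence $T$ is jointly absolutely one-homogeneous, subadditive, and nonnegative in $(u,w)$. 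I would first record that $R$ is finite on $BV(\Omega)$, which is in fact exactly the content of the upper bound, so I would establish that bound before the seminorm axioms.

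For the bound $R(u) \le \text{TV}(u)$, I would simply use $w = 0$ as a competitor in the infimum. Since $\mathcal{A}\nabla 0 = 0$, this gives $R(u) \le T(u,0) = \Vert \nabla u \Vert_{\mathcal{M}(\Omega,\mathbb{R}^2)}$, and by the identification established in Section~\ref{sec:prelim} the right-hand side equals $\text{TV}(u) = \vert u \vert_{BV}$ for $u \in BV(\Omega)$. In particular $R(u) < \infty$, so $R$ is real-valued on $BV(\Omega)$, which is what makes the term seminorm meaningful.

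It then remains to verify the seminorm axioms. Nonnegativity is immediate because both Radon norms are nonnegative, whence $R \ge 0$, and $R(0)=0$ is attained at $w = 0$. For absolute homogeneity with $c \ne 0$ I would use the bijective substitution $w = c\,w'$ on $\mathcal{M}(\Omega,\mathbb{R}^2)$; by homogeneity of the two norms together with linearity of $\nabla$ and $\mathcal{A}\nabla$ one gets $T(cu, cw') = \vert c \vert\, T(u,w')$, hence $R(cu) = \vert c \vert\, R(u)$, while $c = 0$ follows from $R(0) = 0$. For subadditivity I would fix $u_1,u_2 \in BV(\Omega)$ and, for arbitrary competitors $w_1,w_2$, estimate $T(u_1+u_2, w_1+w_2) \le T(u_1,w_1) + T(u_2,w_2)$ using the triangle inequality of the norms and linearity of $\nabla$ and $\mathcal{A}\nabla$; taking the infimum over $w_1$ and $w_2$ yields $R(u_1+u_2) \le R(u_1) + R(u_2)$.

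No step presents a genuine obstacle: the argument is the standard fact that a partial infimum of a jointly sublinear, nonnegative functional is again sublinear and nonnegative. The only points requiring a little care are guaranteeing finiteness (handled by the $w=0$ competitor), the harmless substitution $w = c\,w'$ in the homogeneity step, and invoking the Section~\ref{sec:prelim} identity $\Vert \nabla u \Vert_{\mathcal{M}(\Omega,\mathbb{R}^2)} = \text{TV}(u)$ to close the bound.
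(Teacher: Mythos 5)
Your proof is correct, and for the upper bound it coincides with the paper's argument (take $w=0$ as a competitor in \eqref{eq:Rdefinition0} and invoke $\Vert \nabla u \Vert_{\mathcal{M}(\Omega,\mathbb{R}^2)} = \mathrm{TV}(u)$). For the seminorm property, however, you take a genuinely different route: the paper simply reads it off from the dual formulation \eqref{eq:Rdefinition1}, where $R$ is a supremum of linear functionals $u \mapsto \int_\Omega u\, \mathrm{div}^2(\mathcal{A}^*\psi)\,dx$ over a set of test functions that is symmetric under $\psi \mapsto -\psi$, so that nonnegativity, absolute homogeneity and subadditivity are immediate. You instead work entirely on the primal side, observing that $T(u,w)=\Vert\nabla u - w\Vert_{\mathcal{M}(\Omega,\mathbb{R}^2)}+\Vert\mathcal{A}\nabla w\Vert_{\mathcal{M}(\Omega,\mathbb{R}^m)}$ is a jointly sublinear, absolutely one-homogeneous, nonnegative functional of $(u,w)$ and that a partial infimum of such a functional inherits these properties (with the substitution $w=cw'$ for homogeneity and the pairing $w_1+w_2$ for subadditivity). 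Both arguments are sound; yours has the small advantage of not relying on the primal--dual equivalence (the paper's Fenchel duality lemma), so it is self-contained at the level of \eqref{eq:Rdefinition0}, at the mild cost of having to track the extended-real-valued convention for $\Vert\mathcal{A}\nabla w\Vert$ when $\mathcal{A}\nabla w$ fails to be a finite Radon measure --- which your estimates tolerate, since the triangle inequality and positive homogeneity remain valid in $[0,\infty]$. The paper's appeal to the dual definition is shorter but implicitly presupposes that equivalence.
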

\begin{proof}
The fact that $R$ is a seminorm is apparent from the dual definition \eqref{eq:Rdefinition1}. From the primal definition \eqref{eq:Rdefinition0} we see that the infimum over all $w$ is less than or equal to the value at $w=0$, which is just $|u|_{BV}$. 
\end{proof}

\subsection{Combination of Natural Differential Operators}

As an alternative to the above form we can provide a matrix formulation when writing the gradient as a vector
\begin{equation*}
\nabla_V w := \left(\frac{\partial w_1}{\partial x_1},\frac{\partial w_1}{\partial x_2}, \frac{\partial w_2}{\partial x_1}, \frac{\partial w_2}{\partial x_2}\right)^T.
\end{equation*}
Then the operator ${\cal A}$ is represented by an $m \times 4$ matrix $\bm{A}$, and we have ${\cal A} \nabla w = \bm{A} \nabla_V w$. For the four scalar operators used above we obtain
\begin{align*}
\bm{A}_{\text{curl}} &= \sqrt{\beta_1}(0,-1,1,0),\qquad
\bm{A}_{\text{div}} = \sqrt{\beta_2}(1,0,0,1), \\
\bm{A}_{\text{sh}_1} &= \sqrt{\beta_3}(-1,0,0,1),\qquad
\bm{A}_{\text{sh}_2} = \sqrt{\beta_4}(0,1,1,0).
\end{align*}
Using the vector of natural differential operators
\begin{equation*}
\nabla_N w := (\mycurl(w), \mydiv(w), \mysheara(w), \myshearb(w))^{\top}
\end{equation*}
we can also write
\begin{align*}
\bm{A} \nabla_V w = \bm{B} \nabla_N w, \qquad
\bm{A} = \bm{B} \left( \begin{array}{cccc} 0 & -1 & 1 & 0 \\ 1 & 0 & 0 & 1 \\ -1 & 0 & 0 & 1 \\ 0 & 1 & 1 & 0 \end{array}\right) .
\end{align*}
We mention that due to the fact that we use the Frobenius norm, which has the property $\Vert z \Vert = \Vert {\bf Q} z \Vert$ for every orthogonal matrix ${\bf Q}$, two regularisations represented by matrices ${\bf A}_1$ and ${\bf A}_2$ will be equivalent if there exists an orthogonal matrix  ${\bf Q}$ with ${\bf A}_2 = {\bf Q} {\bf A}_1$. 

The question we would like to investigate in detail in the following paragraphs is whether enforcement of joint sparsity of some or all of the four natural differential operators \eqref{eq:defCurl} - \eqref{eq:defShear2} applied to the vector field $w$ can improve the reconstruction results. Moreover, we shall characterise a variety of models in the literature as special cases. This is not surprising, as we can always choose a suitable matrix $\bm{A}$ for any of those, but interestingly they can all be described by a diagonal matrix
\begin{equation*}
\bm{B}=\text{diag}(\sqrt{\beta_1},\sqrt{\beta_2},\sqrt{\beta_3},\sqrt{\beta_4}).
\end{equation*}
We will thus describe the regularisation functional solely in terms of the vector
\begin{equation*}
{\bm \beta}=(\sqrt{\beta_1},\sqrt{\beta_2},\sqrt{\beta_3},\sqrt{\beta_4})
\end{equation*}
as
\begin{align}  \label{eq:unifiedRegulariser}
R_{\bm \beta}(u) = \sup_{\varphi \in {\cal C}_{\bm \beta}} \int_\Omega u \mydiv(\varphi) ~dx = \inf_{w \in \mathcal{M}(\Omega,\mathbb{R}^2)} \Vert \nabla u - w \Vert_{\mathcal{M}(\Omega,\mathbb{R}^2)} + \Vert \text{diag}({\bm \beta})\nabla_N w\Vert_{\mathcal{M}(\Omega,\mathbb{R}^4)}
\end{align}
with 
\begin{align*}
{\cal C}_{\bm \beta} = \{ \varphi \in C_c^\infty(\Omega,\mathbb{R}^2): \varphi = \nabla_N^*( \text{diag}({\bm \beta}) \psi) \text{ for some } \psi \in C_c^\infty(\Omega,\mathbb{R}^4), \Vert \varphi\Vert_\infty \leq 1, \Vert \psi\Vert_\infty \leq 1\},
\end{align*}
where
\begin{align*}
\nabla^*_N \begin{pmatrix} \psi_1\\ \psi_2\\ \psi_3\\ \psi_4 \end{pmatrix} = \mycurl^* \psi_1 + \mydiv^* \psi_2 + \mysheara^* \psi_3 + \myshearb^* \psi_4
\end{align*}
and
\begin{align*}
\mycurl^* \psi &= \left(\frac{\partial \psi}{\partial x_2},-\frac{\partial \psi}{\partial x_1}\right)^T, \qquad \mydiv^* \psi = \left(-\frac{\partial \psi}{\partial x_1},-\frac{\partial \psi}{\partial x_2}\right)^T,\\
\mysheara^* \psi &= \left(\frac{\partial \psi}{\partial x_1},-\frac{\partial \psi}{\partial x_2}\right)^T, \qquad \myshearb^* \psi = \left(-\frac{\partial \psi}{\partial x_2},-\frac{\partial \psi}{\partial x_1}\right)^T.
\end{align*}

Based on this regularisation we will study the model problem
\begin{equation}
\label{eq:UnifiedModel}
\frac{1}2 \int_\Omega (u-f)^2~dx + \alpha R_{\bm \beta}(u)	\rightarrow \min_{u \in BV(\Omega)} 
\end{equation}
for $f\in L^2(\Omega)$, which of course can be extended directly to more general inverse problems and data terms. Note that $\alpha$ is a regularisation parameter in the classical sense, while the parameters $\beta_i$ are rather characterising the specific form of the regularisation functionals.

In Section \ref{sec:diffop}, we have presented reconstruction results for the denoising problem \eqref{eq:SparseDiffOp} and the effect of regularisation incorporating one of the four scalar-valued vector operations \eqref{eq:defCurl}-\eqref{eq:defShear2}, i.e.\ for only one of the weights $\beta_i$ being non-zero. In the following, we demonstrate how our model behaves when two, three or all $\beta_i$ are non-zero.

\begin{figure}
\captionsetup[subfigure]{labelformat=empty}
\centering
\subfloat[Test image]{\includegraphics[height=3.75cm]{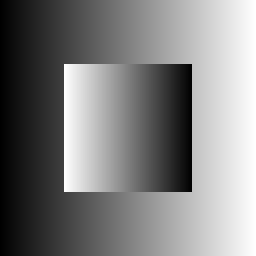}}\hspace{1cm}
\subfloat[Noisy image]{\includegraphics[height=3.75cm]{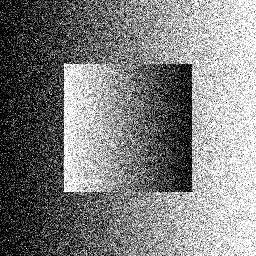}}\hspace{1cm}
\subfloat[Sparse curl, div, sh$_1$, sh$_2$]{\includegraphics[height=3.75cm]{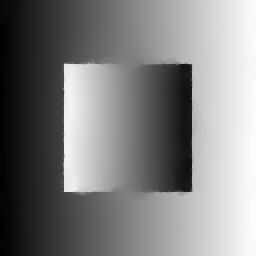}}\\
\subfloat[Sparse curl, div]{\includegraphics[height=3.75cm]{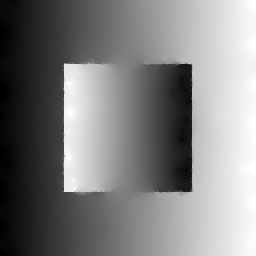}}\hspace{1cm}
\subfloat[Sparse curl, sh$_1$]{\includegraphics[height=3.75cm]{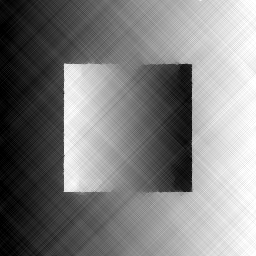}}\hspace{1cm}
\subfloat[Sparse curl, sh$_2$]{\includegraphics[height=3.75cm]{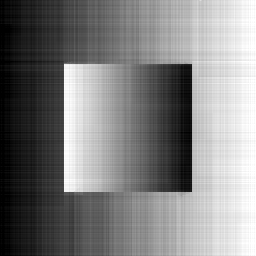}}\\
\subfloat[Sparse div, sh$_1$]{\includegraphics[height=3.75cm]{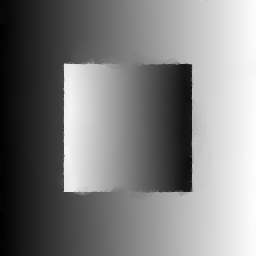}}\hspace{1cm}
\subfloat[Sparse div, sh$_2$]{\includegraphics[height=3.75cm]{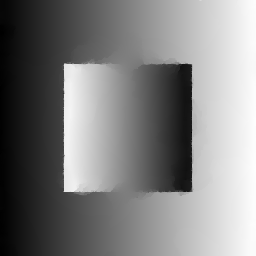}}\hspace{1cm}
\subfloat[Sparse sh$_1$, sh$_2$]{\includegraphics[height=3.75cm]{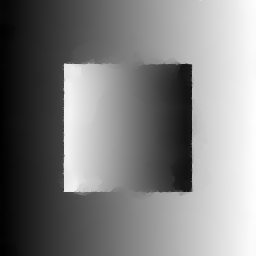}}\\
\subfloat[Sparse curl, div, sh$_1$]{\includegraphics[height=3.75cm]{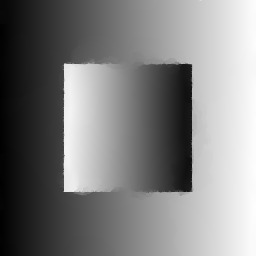}}\hfill
\subfloat[Sparse curl, div, sh$_2$]{\includegraphics[height=3.75cm]{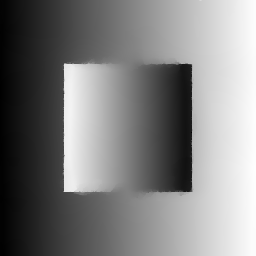}}\hfill
\subfloat[Sparse curl, sh$_1$, sh$_2$]{\includegraphics[height=3.75cm]{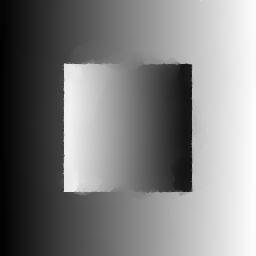}}\hfill
\subfloat[Sparse div, sh$_1$, sh$_2$]{\includegraphics[height=3.75cm]{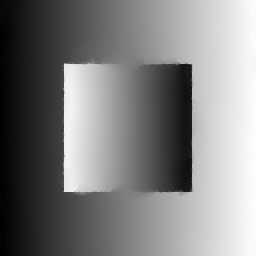}}
\caption{Reconstruction of a piecewise affine test image adding Gaussian noise with zero mean and variance $\sigma^2 = 0.05$ using \eqref{eq:UnifiedModel} for different parameter combinations}
\label{fig:reconTest}
\end{figure}

In Figure \ref{fig:reconTest}, we are given a piecewise affine test image and add Gaussian noise with zero mean and variance $\sigma^2 = 0.05$. For the task of denoising, we solve \eqref{eq:UnifiedModel} and vary the weights $\beta_i$. We optimise the parameters such that the structure similarity (SSIM) index is maximal. We can observe that setting two weights in our novel regulariser to zero still yields some artefacts in the reconstruction, especially in the case of enforcing a sparse curl in combination with one of the two components of the shear. As soon as we only set one of the four weights to zero, we obtain very good results, as can be seen in the bottom row of Figure \ref{fig:reconTest}. On the top right, the reconstruction with all weights being non-zero is presented, which yields a comparably good result.

In the following, we demonstrate that we can resemble special cases of already existing TV-type reconstruction models by modifying the weights in our regulariser \eqref{eq:unifiedRegulariser}. In particular, we show that we are able to retrieve \eqref{eq:TV}, \eqref{eq:CEP}, \eqref{eq:TGV} and \eqref{eq:ICTV}. However, before we discuss the relation of our proposed model to these existing regularisers in detail and even demonstrate that we can interpolate between the latter two by adapting one single weight, we shall elaborate on nullspaces and the existence of $BV$ solutions for our unified model given in \eqref{eq:UnifiedModel}.

\subsection{Nullspaces and Existence of $BV$ Solutions}

Our numerical results indicate that we obtain a real denoising resembling at least the regularity of a $BV$ solution if at least three of the $\beta_i$ are not vanishing. It is thus interesting to further study the nullspace ${\cal N}(R_{\bm \beta})$ of the regularisation functional $R_{\bm \beta}$ in such cases and check whether it is finite-dimensional. Subsequently, a similar argument to \cite{HigherOrderTV} can be made showing that the regularisation functional is equivalent to the norm in $BV(\Omega)$ on a subspace that does not include the nullspace. If the nullspace components are sufficiently regular, Lemma \ref{artefactlemma0} yields that minimisers of a variational model for denoising are indeed in $BV(\Omega)$.
In the following, we thus aim at characterising the set of all $u \in L^2(\Omega)$ for which $R_{\bm \beta}(u) = 0$ holds. Note that we provide further details on the derivation of the subsequent results in Appendix \ref{app:nullspaces}.
First of all, we directly see that $\beta_1$ plays a special role, since curl($\nabla u) = 0$. Thus, the case $\beta_1=0$ will yield the same nullspace as $\beta_1 > 0$. Hence, we only distinguish cases based on the other parameters:
\begin{itemize}

\item $\beta_i > 0$, $i=2,3,4$. In this case we have $\nabla u = w$ and $\nabla w =0$, the nullspace simply consists of affinely linear functions (see also \cite{HigherOrderTV}).
 
\item $\beta_2=0$, $\beta_3,\beta_4 > 0$. In this case we can argue similarly to Section \ref{sec:diffop} and see that $u=U(x_1+x_2) + V(x_1-x_2)=U_1(x_1)+U_2(x_2)$. Computation of second derivatives with respect to $x_1$ and $x_2$, respectively, yields the identity $U''(x_1+x_2) + V''(x_1-x_2)=U_1''(x_1)=U_2''(x_2).$ Thus, $U_1''$ and $U_2''$ are equal and constant. Integrating those with the constraint that $U_1$ and $U_2$ can only depend on one variable yields that the nullspace can only be a linear combination of $x_1^2+x_2^2$, $x_1$, $x_2$, $1$. One easily checks that these functions are indeed elements of the nullspace.

\item $\beta_3=0$, $\beta_2,\beta_4 > 0$. Now we see that $u$ is harmonic and on the other hand
$u=U_1(x_1)+U_2(x_2)$, which yields $U_1''(x_1) + U_2''(x_2) = 0$. The latter can only be true if $U_1''$ and $U_2''$ are constant, with constants summing to zero. Integrating those shows that the nullspace consists exactly of linear combinations of $x_1^2-x_2^2$, $x_1$, $x_2$, $1$.

\item $\beta_4=0$, $\beta_2,\beta_3 > 0$. A similar argument as above now yields $u=U(x_1+x_2) + V(x_1-x_2)$ and $U''(x_1+x_2)+V''(x_1-x_2)=0$. Again we obtain that $U''$ and $V''$ are constant, after integration we see that the nullspace consists exactly of linear combinations of $x_1 x_2$, $x_1$, $x_2$, $1$.

\end{itemize}

\begin{sloppypar}
This leads us to the following result characterising further the topological properties of the regularisation functionals, based on a Sobolev-Korn type inequality, which we state first.

\begin{lemma}
\label{lemma:Korn}
Let $\beta_i \geq 0$ for $i=1,\ldots,4$ and assume that at most one of the parameters $\beta_i$ vanishes. Then the Korn-type inequality \begin{equation}\label{eq:Korn}
\Vert w - P_{\bm{B}} w\Vert_{\mathcal{M}(\Omega,\mathbb{R}^2)} \leq C_{\bm{B}} \Vert \bm{B} \nabla_N w \Vert_{\mathcal{M}(\Omega,\mathbb{R}^4)}
\end{equation}
holds, 
where $P_{\bm{B}}$ is the projection onto the finite-dimensional nullspace ${\cal N}(\bm{B} \nabla_N w)$ of the differential operator $\bm{B} \nabla_N w$ and $C_{\bm{B}}$ is a constant depending on $\bm{B}$ only.
\end{lemma}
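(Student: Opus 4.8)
\emph{Strategy.} The natural route is a compactness--contradiction argument of Peetre--Tartar type, exactly as in the proof of the classical Korn and Poincar\'e inequalities but carried out in the measure setting. Suppose \eqref{eq:Korn} fails. Then there is a sequence $(w_n) \subset \mathcal{M}(\Omega,\mathbb{R}^2)$ with $\bm{B}\nabla_N w_n \in \mathcal{M}(\Omega,\mathbb{R}^4)$ and
\begin{equation*}
\Vert w_n - P_{\bm B} w_n \Vert_{\mathcal{M}(\Omega,\mathbb{R}^2)} = 1, \qquad \Vert \bm{B}\nabla_N w_n \Vert_{\mathcal{M}(\Omega,\mathbb{R}^4)} \to 0.
\end{equation*}
Setting $v_n := w_n - P_{\bm B} w_n$ and using that $P_{\bm B}$ is a linear projection onto the nullspace $\mathcal{N}(\bm{B}\nabla_N)$ (so that $P_{\bm B} v_n = 0$ and $\bm{B}\nabla_N v_n = \bm{B}\nabla_N w_n$), we obtain a normalised sequence with $\Vert v_n \Vert_{\mathcal{M}} = 1$, $P_{\bm B} v_n = 0$ and $\bm{B}\nabla_N v_n \to 0$ in $\mathcal{M}$. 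The goal is to extract a limit $v$ that lies in the nullspace, is annihilated by $P_{\bm B}$, and retains unit norm, which is contradictory.

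\emph{Finite-dimensional nullspace and ellipticity.} First I would record that, under the hypothesis that at most one $\beta_i$ vanishes, $\mathcal{N}(\bm{B}\nabla_N)$ is finite-dimensional; this can be read off exactly as in the case analysis preceding the lemma, where solving $\bm{B}\nabla_N w = 0$ pointwise yields only affine vector fields (constants, infinitesimal rotations, scalings or their analogues), so $P_{\bm B}$ is a well-defined finite-rank projection. The decisive structural fact is that the constant-coefficient first-order operator $\bm{B}\nabla_N$ is \emph{elliptic} precisely in this regime: replacing $\partial_{x_1},\partial_{x_2}$ by $i\xi_1,i\xi_2$, the symbol of $\nabla_N$ sends $(w_1,w_2)$ to the four rows $(-i\xi_2,i\xi_1)$, $(i\xi_1,i\xi_2)$, $(-i\xi_1,i\xi_2)$, $(i\xi_2,i\xi_1)$, and a short computation shows that any three of these rows already have trivial common kernel for every $\xi \neq 0$, whereas dropping two of them (two vanishing $\beta_i$) destroys injectivity and reproduces the infinite-dimensional nullspaces observed earlier. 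Thus the hypothesis that at most one $\beta_i$ vanishes is exactly the ellipticity threshold.

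\emph{Compactness (the crux).} The heart of the proof is a Rellich--Kondrachov-type compactness statement: every sequence bounded in $\mathcal{M}(\Omega,\mathbb{R}^2)$ whose image under $\bm{B}\nabla_N$ is bounded in $\mathcal{M}(\Omega,\mathbb{R}^4)$ is, after noting that ellipticity forces such measures to be absolutely continuous, relatively compact in $L^1(\Omega,\mathbb{R}^2)$ (and hence in the $\mathcal{M}$-norm, which agrees with the $L^1$-norm on $L^1$-functions regarded as measures). When $\beta_1=0$ this is literally the compact embedding $\BD(\Omega)\hookrightarrow\hookrightarrow L^1(\Omega)$, since $\mydiv$, $\mysheara$, $\myshearb$ together span the symmetrised gradient $\mathcal{E}(w)$; in the remaining cases the same conclusion follows from the ellipticity of $\bm{B}\nabla_N$ by the analogous compactness theory for elliptic first-order operators. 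I expect this to be the main obstacle, and the place where one must be careful: by Ornstein's non-inequality one cannot bound the full gradient $\nabla w$ in the $L^1$ or measure norm by $\bm{B}\nabla_N w$, so compactness cannot be obtained by naively reducing to $\BV(\Omega)$; it has to be established directly, in the spirit of the compactness proof for $\BD$, using the injectivity of the symbol to localise, mollify, and control $w$ itself rather than its gradient.

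\emph{Conclusion.} Granting compactness, a subsequence of $v_n$ converges strongly in $L^1$ to some $v$ with $\Vert v\Vert_{\mathcal{M}} = 1$. Since $\bm{B}\nabla_N$ is continuous from $\mathcal{M}$ into the space of distributions and $\bm{B}\nabla_N v_n\to 0$, the limit satisfies $\bm{B}\nabla_N v = 0$, i.e.\ $v \in \mathcal{N}(\bm{B}\nabla_N)$ and therefore $P_{\bm B} v = v$. On the other hand $P_{\bm B}$ is continuous and finite-rank, so $P_{\bm B} v = \lim_n P_{\bm B} v_n = 0$; combining the two identities yields $v = 0$, which contradicts $\Vert v\Vert_{\mathcal{M}} = 1$. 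Hence \eqref{eq:Korn} must hold, and tracking the constant through the normalisation shows that it depends only on $\Omega$ and $\bm{B}$.
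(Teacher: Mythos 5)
Your overall strategy (a Peetre--Tartar compactness--contradiction argument) is a legitimate alternative to the paper's proof, which instead proceeds purely algebraically: for at most one vanishing $\beta_i$ the authors exhibit an invertible $\tilde{\bm{B}}\in\R^{2\times 2}$ and an orthogonal $\bm{Q}\in\R^{4\times 4}$ with $\bm{Q}\bm{B}\nabla_N w=\mathcal{E}(\tilde{\bm{B}}w)$ (respectively $\nabla(\tilde{\bm{B}}w)$ when all $\beta_i>0$), and then simply invoke the known Korn inequality in $\BD$ from \cite{bredies2013symmetric} (respectively the Poincar\'e--Wirtinger inequality in $BV$), using the orthogonal invariance of the norm. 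That reduction is where the hypothesis on the $\beta_i$ actually does its work, and it delivers for free the compactness you would otherwise have to prove by hand.

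Your proposal, however, has a genuine gap precisely at the step you yourself flag as the crux. You justify both the finite-dimensionality of the nullspace and the compactness by the claim that $\bm{B}\nabla_N$ is elliptic exactly when at most one $\beta_i$ vanishes, asserting that dropping two of the four symbol rows destroys injectivity. That is false: the pair $(\mycurl,\mydiv)$ alone already has injective real symbol for every $\xi\neq 0$ (the kernels of the two rows are $\mathrm{span}\{\xi\}$ and $\mathrm{span}\{\xi^{\perp}\}$), and likewise the pair $(\mysheara,\myshearb)$; yet in these two-operator cases the nullspace on a bounded domain is infinite-dimensional (conjugate-harmonic, i.e.\ holomorphic-type pairs), so no Korn inequality and no compact embedding can hold --- consistent with the paper's remark that its proof fails there and with the point artefacts of the CEP/SVF models. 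Hence $\R$-ellipticity of the symbol is not the dividing line, and the ``compactness theory for elliptic first-order operators'' you appeal to would equally well ``prove'' the inequality in the div--curl case, where it is false. What is actually needed is the finer structural fact that three suitably weighted operators reassemble, after an orthogonal change of the target and an invertible change of $w$, into the symmetric gradient (equivalently, a $\mathbb{C}$-ellipticity-type condition), which is exactly the content of the paper's reduction. A secondary unproven point is your claim that ellipticity forces the relevant measures to be absolutely continuous so that the $\mathcal{M}$-norm becomes an $L^1$-norm and a Rellich-type embedding applies; this too is only available through the $\BD$/$BV$ structure and not from symbol injectivity. Until the compactness lemma is actually established in the admissible cases --- at which point you have essentially re-derived the cited Korn inequality --- the contradiction argument does not close.
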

\begin{proof}
We will use the Korn inequality in measure spaces (cf. \cite{bredies2013symmetric}, Corollary 4.20), stating that for vector fields of bounded deformation the inequality
$$ \Vert w - P  w\Vert_{L^2(\Omega,\mathbb{R}^2)} \leq  C_S \Vert {\cal E}_S(w)\Vert_{\mathcal{M}(\Omega,\mathbb{R}^{2 \times 2})} $$
holds, where ${\cal E}_S(w)$ is the symmetric gradient and $P$ a projector onto its nullspace. We can equivalently write the inequality as 
$$ \Vert w - P  w\Vert_{L^2(\Omega,\mathbb{R}^2)} \leq  C \Vert {\cal E}(w)\Vert_{\mathcal{M}(\Omega,\mathbb{R}^{4})}, $$
where $\mathcal{E}(w)$ is the vectorised symmetric gradient
\begin{equation*}
{\cal E}(w) = \left(\frac{\partial w_1}{\partial x_1}, \frac{\frac{\partial w_1}{\partial x_2} + \frac{\partial w_2}{\partial x_1}}{2}, \frac{\frac{\partial w_1}{\partial x_2} + \frac{\partial w_2}{\partial x_1}}{2}, \frac{\partial w_2}{\partial x_2}\right)^\top.
\end{equation*}
Since on a bounded domain the total variation of a measure is a weaker norm than the $L^2$ norm of its Lebesgue density, we find
$$ \Vert w - P  w\Vert_{\mathcal{M}(\Omega,\mathbb{R}^2)} \leq  \tilde C \Vert {\cal E}(w)\Vert_{\mathcal{M}(\Omega,\mathbb{R}^4)}. $$
In order to verify the Korn-type inequality it is crucial to have three coefficients $\beta_i$ different from zero. 
In this case an elementary computation shows that there exists an invertible matrix
$\tilde{\bm{B}} \in \mathbb{R}^{2\times 2}$ and an orthogonal Matrix $\bm{Q} \in \mathbb{R}^{4 \times 4}$ such that
$$ \bm{Q} \bm{B} \nabla_N w = {\cal E}(\tilde{\bm{B}} w). $$
Thus, the Korn inequality applied to $\tilde w = \tilde{\bm{B}} w$ implies
\begin{align*} \Vert \tilde w - P  \tilde w\Vert_{\mathcal{M}(\Omega,\mathbb{R}^2)} & \; \leq  \tilde C \Vert {\cal E}(\tilde{\bm{B}} w)\Vert_{\mathcal{M}(\Omega,\mathbb{R}^{4})} \\  
& \; = \tilde C \Vert \bm{Q} \bm{B} \nabla_N w\Vert_{\mathcal{M}(\Omega,\mathbb{R}^{4})}  \\
& \; = \tilde C \Vert \bm{B} \nabla_N w\Vert_{\mathcal{M}(\Omega,\mathbb{R}^{4})} . 
\end{align*}
Since $P=\tilde{\bm{B}} P_N (\tilde{\bm{B}})^{-1}$ is a projector on the nullspace of ${\cal E}$, we obtain 
\begin{align*} 
\Vert   w - P_N  w\Vert_{\mathcal{M}(\Omega,\mathbb{R}^2)} & \; \leq \Vert \tilde{\bm{B}}^{-1} \Vert~\Vert \tilde w - P  \tilde w\Vert_{\mathcal{M}(\Omega,\mathbb{R}^2)} \\ 
& \; \leq   \Vert \tilde{\bm{B}}^{-1} \Vert~\tilde C \Vert \bm{B} \nabla_N w\Vert_{\mathcal{M}(\Omega,\mathbb{R}^{4})} . 
\end{align*}

If all $\beta_i$ are positive, we can use an analogous argument with $\bm{B} \nabla_N w = \nabla \tilde{\bm{B}} w$ and the Poincar{\'e}-Wirtinger inequality in spaces of bounded variation \cite{Bergounioux_Poincare}. 
\end{proof}

\begin{lemma}
Let $\beta_i \geq 0$ for $i=1,\ldots,4$. Then for $R_{\bm \beta}$ defined in \eqref{eq:unifiedRegulariser} the estimate
$
R_{\bm \beta}(u) \leq |u|_{BV} 
$ 
holds for all $ u \in BV(\Omega)$. Moreover, assume that at most one of the parameters $\beta_i$ vanishes and let ${\cal U} \subset BV(\Omega)$ be the subspace of all $BV$ functions orthogonal to ${\cal N}(R_{\bm \beta})$ in the $L^2$ scalar product. Then there exists a constant $c \in (0,1)$ depending only on ${\bm \beta}$ and $\Omega$ such that 
$
R_{\bm \beta}(u) \geq c |u|_{BV} 
$ for all $u \in {\cal U} $.
\end{lemma}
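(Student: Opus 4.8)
The plan is to handle the two inequalities separately. For the upper bound $R_{\bm \beta}(u)\le |u|_{BV}$ there is essentially nothing to add: in the primal formulation \eqref{eq:unifiedRegulariser} the infimum over $w\in\mathcal{M}(\Omega,\mathbb{R}^2)$ is bounded above by its value at the admissible choice $w=0$, which equals $\Vert\nabla u\Vert_{\mathcal{M}(\Omega,\mathbb{R}^2)}=|u|_{BV}$. This is exactly the argument of Lemma \ref{Rseminormlemma}, and it also shows that the optimal constant in the lower bound cannot exceed $1$. Strict inequality $c<1$ then follows because the infimal convolution is genuinely non-trivial: for a suitable $u\in\mathcal{U}$ a competitor $w\neq 0$ strictly lowers the functional below $|u|_{BV}$, so the best constant is pushed strictly below $1$.

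The substance is the coercivity estimate $R_{\bm\beta}(u)\ge c\,|u|_{BV}$ on $\mathcal{U}$, which I would prove by compactness and contradiction. Suppose no such $c>0$ exists; then there is a sequence $u_n\in\mathcal{U}$ with $|u_n|_{BV}=1$ and $R_{\bm\beta}(u_n)\to 0$. Since constants lie in $\mathcal{N}(R_{\bm\beta})$, orthogonality forces $\int_\Omega u_n\,dx=0$, so the Poincar\'e--Wirtinger inequality bounds $\Vert u_n\Vert_{L^1}$ by $|u_n|_{BV}=1$; hence $(u_n)$ is bounded in $BV(\Omega)$. By the compact embedding $BV(\Omega)\hookrightarrow L^1(\Omega)$ I extract a subsequence $u_{n_k}\to u^*$ in $L^1$, with $u^*\in BV(\Omega)$ by lower semicontinuity and $u^*\in\mathcal{U}$, since the finitely many orthogonality constraints $\int_\Omega u\,p_j\,dx=0$ against the bounded polynomial basis of $\mathcal{N}(R_{\bm\beta})$ pass to the $L^1$ limit.

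The crucial step is to upgrade this to information on the gradients. Choosing (near\nobreakdash-)minimisers $\overline{w}_n$ so that $\Vert\nabla u_n-\overline{w}_n\Vert_{\mathcal{M}}\to 0$ and $\Vert\bm{B}\nabla_N\overline{w}_n\Vert_{\mathcal{M}}\to 0$, the Korn-type inequality of Lemma \ref{lemma:Korn} gives $\Vert\overline{w}_n-P_{\bm{B}}\overline{w}_n\Vert_{\mathcal{M}}\le C_{\bm{B}}\Vert\bm{B}\nabla_N\overline{w}_n\Vert_{\mathcal{M}}\to 0$; this is precisely where the hypothesis that at most one $\beta_i$ vanishes enters. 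The projections $P_{\bm{B}}\overline{w}_n$ are bounded in the finite-dimensional nullspace $\mathcal{N}(\bm{B}\nabla_N)$, so along a further subsequence $P_{\bm{B}}\overline{w}_n\to w_0\in\mathcal{N}(\bm{B}\nabla_N)$, and combining the three convergences yields $\nabla u_n\to w_0$ in the measure norm. Passing to distributional limits identifies $\nabla u^*=w_0\in\mathcal{N}(\bm{B}\nabla_N)$, whence $R_{\bm\beta}(u^*)=0$ and $u^*\in\mathcal{N}(R_{\bm\beta})$.

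The contradiction is then sealed by norm convergence: $\Vert\nabla u_n\Vert_{\mathcal{M}}\to\Vert w_0\Vert_{\mathcal{M}}$, and since $\Vert\nabla u_n\Vert_{\mathcal{M}}=|u_n|_{BV}=1$ this forces $|u^*|_{BV}=\Vert w_0\Vert_{\mathcal{M}}=1$, so $u^*\neq 0$; but $u^*\in\mathcal{N}(R_{\bm\beta})\cap\mathcal{U}=\{0\}$. I expect the main obstacle to be exactly this last point. Lower semicontinuity of the $BV$ seminorm alone would give only $|u^*|_{BV}\le 1$ and hence no contradiction, so the argument hinges on using the Korn inequality to force genuine norm convergence $\nabla u_{n_k}\to w_0$ in $\mathcal{M}(\Omega,\mathbb{R}^2)$ rather than mere weak-$*$ convergence. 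The finite-dimensionality and closedness of the nullspaces $\mathcal{N}(R_{\bm\beta})$ and $\mathcal{N}(\bm{B}\nabla_N)$, together with the stability of the orthogonality constraints under $L^1$ limits, are the remaining routine checks.
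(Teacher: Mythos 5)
Your proof is correct, but it takes a genuinely different route from the paper. The paper's argument is direct and quantitative: starting from the Korn-type inequality of Lemma~\ref{lemma:Korn}, it bounds $\Vert \nabla u - w\Vert_{\mathcal{M}} + \Vert \bm{B}\nabla_N w\Vert_{\mathcal{M}} \geq \min\{1,\tfrac{1}{C_{\bm B}}\}\,\Vert \nabla u - P_{\bm B} w\Vert_{\mathcal{M}}$ via the triangle inequality, takes the infimum to reduce to $\inf_{\tilde w \in \mathcal{N}(\bm{B}\nabla_N)}\Vert\nabla u - \tilde w\Vert_{\mathcal{M}}$, and then asserts that for $u \in \mathcal{U}$ the optimal $\tilde w$ is $0$, which yields the explicit constant $c = \min\{1,\tfrac{1}{C_{\bm B}}\}$ (following \cite{bredies2014regularization}). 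You instead run the classical compactness-and-contradiction scheme: normalise $|u_n|_{BV}=1$ with $R_{\bm\beta}(u_n)\to 0$, use Poincar\'e--Wirtinger plus the compact embedding $BV(\Omega)\hookrightarrow L^1(\Omega)$ to extract a limit $u^*\in\mathcal{U}$, and — this is the step you correctly identify as the crux — use the Korn inequality together with the finite-dimensionality of $\mathcal{N}(\bm{B}\nabla_N)$ to upgrade to \emph{strong} convergence $\nabla u_{n_k}\to w_0$ in $\mathcal{M}(\Omega,\mathbb{R}^2)$, so that $|u^*|_{BV}=1$ while $u^*\in\mathcal{N}(R_{\bm\beta})\cap\mathcal{U}=\{0\}$. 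Both proofs hinge on exactly the same two ingredients (Lemma~\ref{lemma:Korn} and finite-dimensionality of the kernel), so the hypotheses enter in the same place; what differs is the trade-off. The paper's route buys an explicit constant but leans on the step ``the optimal value is $\tilde w = 0$ for $u\in\mathcal{U}$'', which mixes $L^2$-orthogonality to $\mathcal{N}(R_{\bm\beta})$ with a Radon-norm minimisation over $\mathcal{N}(\bm{B}\nabla_N)$ and is arguably the least transparent point of that argument; your route sidesteps it entirely at the price of a non-constructive constant. Your remaining checks are indeed routine: the orthogonality constraints pass to the $L^1$ limit because $\mathcal{N}(R_{\bm\beta})$ is spanned by polynomials bounded on $\Omega$, and the requirement $c<1$ is automatic since any valid $c$ may be shrunk. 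One cosmetic remark: the Poincar\'e--Wirtinger step implicitly uses connectedness of $\Omega$, which the paper's standing assumptions do not state but which is also tacitly needed for its nullspace computations.
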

\begin{proof}
The first estimate is a special case of Lemma \ref{Rseminormlemma}. In order to verify the second inequality we proceed as in \cite{bredies2014regularization}. The key idea is to use the Korn-type inequality defined in Lemma \ref{lemma:Korn}. Given \eqref{eq:Korn}, we have
\begin{align*}
&\Vert \nabla u - w\Vert_{\mathcal{M}(\Omega,\mathbb{R}^2)} +  \Vert \bm{B} \nabla_N w \Vert_{\mathcal{M}(\Omega,\mathbb{R}^4)} \\
& \; \geq \Vert \nabla u - w\Vert_{\mathcal{M}(\Omega,\mathbb{R}^2)} + \frac{1}{C_{\bm{B}}}  \Vert w - P_{\bm{B}} w\Vert_{\mathcal{M}(\Omega,\mathbb{R}^2)} \\
& \; \geq \min\{1,\frac{1}{C_{\bm{B}}}\} ( \Vert \nabla u - w\Vert_{\mathcal{M}(\Omega,\mathbb{R}^2)} + \Vert w - P_{\bm{B}} w\Vert_{\mathcal{M}(\Omega,\mathbb{R}^2)}) \\ 
& \; \geq \min\{1,\frac{1}{C_{\bm{B}}}\}  \Vert \nabla u - P_{\bm{B}} w\Vert_{\mathcal{M}(\Omega,\mathbb{R}^2)}.
\end{align*}
Thus, taking the infimum over all $w$ yields
\begin{align*}
R_{\bm \beta}(u) & \; \geq  \min\{1,\frac{1}{C_{\bm{B}}}\}   \inf_{w \in \mathcal{M}(\Omega,\mathbb{R}^2)} \Vert \nabla u - P_{\bm{B}} w\Vert_{\mathcal{M}(\Omega,\mathbb{R}^2)}\\
& \; =  \min\{1,\frac{1}{C_{\bm{B}}}\}   \inf_{\tilde{w} \in {\cal N}(\bm{B} \nabla_N w)} \Vert \nabla u - \tilde{w}\Vert_{\mathcal{M}(\Omega,\mathbb{R}^2)},
\end{align*}
where the last equality results from the definition of the projection $P_{\bm B}$. 
It is then easy to see that for $u \in {\cal U}$ the optimal value is $\tilde{w}=0$. This implies the desired estimate.
\end{proof}
\begin{rem}
In the above analysis of the nullspaces we figured out that due to our choice of the first term $\Vert \nabla u - w \Vert_{\mathcal{M}(\Omega,\mathbb{R}^2)}$ of the regulariser $R_{\bm \beta}$, penalisation of the curl is irrelevant for the characterisation of the nullspaces of $R_{\bm \beta}$. Accordingly, the assertion of the above lemma can easily be extended to the cases $\beta_1 = \beta_2 = 0$ and $\beta_3,\beta_4 > 0$, $\beta_1 = \beta_3 = 0$ and $\beta_2,\beta_4 > 0$ as well as $\beta_1 = \beta_4 = 0$ and $\beta_2,\beta_3 > 0$, where the line of argument follows exactly the proof given above. For all remaining cases the above proof fails however, since in these cases the resulting nullspaces are not finite-dimensional.
\end{rem}
\begin{thm}
Let $f\in L^2(\Omega)$ and $\alpha > 0$. Moreover, let $\beta_i \geq 0$ for $i=1,\ldots,4$ and let at most one of the parameters $\beta_1,\ldots,\beta_4$ vanish. Then there exists a unique solution $\hat u \in BV(\Omega)$ of \eqref{eq:UnifiedModel}.
\end{thm}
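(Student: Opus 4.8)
The plan is to apply the direct method of the calculus of variations, and to dispatch uniqueness first since it is immediate: the fidelity $\frac12\Vert u-f\Vert_2^2$ is strictly convex on $L^2(\Omega)$ while $R_{\bm\beta}$ is convex (it is a seminorm, as noted after \eqref{eq:unifiedRegulariser}), so the objective in \eqref{eq:UnifiedModel} is strictly convex on $BV(\Omega)$ and admits at most one minimiser. Existence is the substantive part. I would first note that the infimum is finite, e.g.\ $u=0\in BV(\Omega)$ gives the value $\frac12\Vert f\Vert_2^2$, and then fix a minimising sequence $(u_n)\subset BV(\Omega)$ with $\frac12\Vert u_n-f\Vert_2^2+\alpha R_{\bm\beta}(u_n)\le M<\infty$.

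The crucial and most delicate step is to bound $(u_n)$ in $BV(\Omega)$, and here the hypothesis that at most one $\beta_i$ vanishes is essential, since it is exactly the regime in which the preceding lemma and Lemma \ref{lemma:Korn} guarantee a finite-dimensional nullspace together with the coercivity estimate $R_{\bm\beta}(u)\ge c\,\vert u\vert_{BV}$ on the $L^2$-orthogonal complement ${\cal U}$ of ${\cal N}(R_{\bm\beta})$. I would split $u_n=u_{0,n}+u_n^\perp$ with $u_{0,n}\in{\cal N}(R_{\bm\beta})$ and $u_n^\perp\in{\cal U}$. Because constants lie in ${\cal N}(R_{\bm\beta})$, every element of ${\cal U}$ has zero mean. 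Using the nullspace-shift invariance (Lemma \ref{artefactlemma0}) and then the coercivity estimate gives $R_{\bm\beta}(u_n)=R_{\bm\beta}(u_n^\perp)\ge c\,\vert u_n^\perp\vert_{BV}$, so $\vert u_n^\perp\vert_{BV}\le M/(c\alpha)$; combined with the Poincar\'e--Wirtinger inequality for zero-mean $BV$ functions this bounds $\Vert u_n^\perp\Vert_{BV}$. For the nullspace component, orthogonality yields $\Vert u_{0,n}\Vert_2^2\le\Vert u_n\Vert_2^2\le(\sqrt{2M}+\Vert f\Vert_2)^2$, and since ${\cal N}(R_{\bm\beta})$ is finite-dimensional and consists of polynomials of degree at most two (hence smooth on the bounded $\Omega$), all norms on it are equivalent and $u_{0,n}$ is bounded in $BV$. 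Thus $(u_n)$ is bounded in $BV(\Omega)$.

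With the bound in hand, I would invoke compactness: the compact embedding $BV(\Omega)\hookrightarrow\hookrightarrow L^1(\Omega)$ (via Banach--Alaoglu and weak-$*$ sequential compactness of bounded sets in $BV$, cf.\ the argument used in the exactness lemma) yields a subsequence with $u_{n_k}\rightharpoonup^*\hat u$ in $BV$ and $u_{n_k}\to\hat u$ in $L^1$; since $BV(\Omega)\hookrightarrow L^2(\Omega)$ in two dimensions, boundedness in $L^2$ lets me additionally assume $u_{n_k}\rightharpoonup\hat u$ weakly in $L^2$, and weak-$*$ lower semicontinuity of the $BV$-norm gives $\hat u\in BV(\Omega)$. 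Finally I would verify lower semicontinuity of both terms along this subsequence: $R_{\bm\beta}$ is a supremum of $L^1$-continuous linear functionals by its dual form in \eqref{eq:unifiedRegulariser}, hence lower semicontinuous with respect to the $L^1$ convergence $u_{n_k}\to\hat u$, while $\frac12\Vert\cdot-f\Vert_2^2$ is convex and strongly continuous on $L^2$, hence weakly lower semicontinuous. Adding the two estimates shows the objective at $\hat u$ does not exceed $\liminf_k$ of the objective along $(u_{n_k})$, so $\hat u$ attains the infimum and is the unique minimiser.

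I expect the main obstacle to be precisely the $BV$-coercivity: because $R_{\bm\beta}$ has a nontrivial kernel, no naive bound is available, and the whole argument hinges on the finite-dimensionality of ${\cal N}(R_{\bm\beta})$ and the Korn-type estimate of Lemma \ref{lemma:Korn}, channelled through the nullspace/orthogonal-complement decomposition. Everything downstream (compactness and lower semicontinuity) is then standard.
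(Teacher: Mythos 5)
Your proposal is correct and follows essentially the same route as the paper: both arguments hinge on splitting along the finite-dimensional nullspace ${\cal N}(R_{\bm \beta})$, invoking the coercivity estimate $R_{\bm \beta}(u)\geq c\vert u\vert_{BV}$ on ${\cal U}$ from the Korn-type lemma, and concluding by compactness and weak-star lower semicontinuity, with uniqueness from strict convexity of the data term. The only (cosmetic) difference is that the paper first uses Lemma \ref{artefactlemma0} to identify the nullspace component of the minimiser exactly as $f_0$ and then minimises over ${\cal U}$, whereas you keep the full minimising sequence and bound its nullspace part via $L^2$-orthogonality and norm equivalence on the finite-dimensional kernel -- both are valid.
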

\begin{proof}
We decompose $u=u_0+(u-u_0)$ and $f=f_0 +(f-f_0)$, where $u_0$ respectively $f_0$ are the $L^2$-projection on the nullspace of $R_{\bm \beta}$. Then
\begin{align*}
&\frac{1}2 \int_\Omega(u-f)^2~dx + \alpha R_{\bm \beta}(u)\\
& \; =  \frac{1}2 \int_\Omega(u_0-f_0)^2~dx +  \alpha R_{\bm \beta}(u-u_0) + \frac{1}2 \int_\Omega(u-u_0-f+f_0)^2~dx.
\end{align*}
Since $f_0  \in BV(\Omega)$, it is easy to see that the optimal solution is given by $u=f_0 + v$, where $v$ is a minimiser of
\begin{equation*}
\frac{1}2 \int_\Omega(v-f+f_0)^2~dx +  \alpha R_{\bm \beta}(v)  \rightarrow \min_{ v \in {\cal U}}
\end{equation*}
according to Lemma \ref{artefactlemma0}.

Since $R_{\bm \beta}$ is coercive on ${\cal U}$ and the functionals are lower semicontinuous in the weak-star topology on bounded sets, we conclude the existence of a minimiser by standard arguments. Uniqueness follows from strict convexity of the first term and convexity of $R_{\bm \beta}$. 
\end{proof}
\end{sloppypar}

\subsection{Special Cases}
\label{Subsec:special cases}
\begin{sloppypar}
In the following, we discuss several special cases of second-order functionals in the literature, which arise either by a special choice of the vector ${\bm \beta}$ or by letting elements in ${\bm \beta}$, in particular $\beta_1$, tend to infinity. For the sake of readability we will in all cases consider all models with additional parameters equal to one, the case of other values follows by simple scaling arguments. Throughout this section, for simplicity we denote by $\mathcal{E}(w)$ and $\nabla(w)$ the respective vectorised versions, i.e.
\begin{equation*}
{\cal E}(w) = \left(\frac{\partial w_1}{\partial x_1}, \frac{\frac{\partial w_1}{\partial x_2} + \frac{\partial w_2}{\partial x_1}}{2}, \frac{\frac{\partial w_1}{\partial x_2} + \frac{\partial w_2}{\partial x_1}}{2}, \frac{\partial w_2}{\partial x_2}\right)^\top
\end{equation*}
and
\begin{equation*}
\nabla(w) = \left(\frac{\partial w_1}{\partial x_1}, \frac{\partial w_1}{\partial x_2}, \frac{\partial w_2}{\partial x_1}, \frac{\partial w_2}{\partial x_2}\right)^\top.
\end{equation*} 
\end{sloppypar}
\subsubsection*{TGV}

The second-order TGV model \eqref{eq:TGV} in a notation corresponding to our approach is given by
\begin{align*}
&\text{TGV}(u) = \inf_{w \in \mathcal{M}(\Omega,\mathbb{R}^2)} \Vert \nabla u - w \Vert_{\mathcal{M}(\Omega,\mathbb{R}^2)} + \Vert {\cal E}(w) \Vert_{\mathcal{M}(\Omega,\mathbb{R}^4)}
\end{align*}
with ${\cal E}(w)$ being the symmetric gradient, encoded via the matrix 
\begin{equation*}
{\bf A}_{\text{TGV}} = \left( \begin{array}{llll} 1 & 0 & 0 & 0 \\0 & \frac{1}{2} & \frac{1}{2} & 0 \\ 0 & \frac{1}{2} & \frac{1}{2} & 0 \\ 0 & 0 & 0 & 1\end{array}\right).
\end{equation*}
Now let ${\bf B}=$diag$(0,\frac{1}{\sqrt{2}},\frac{1}{\sqrt{2}},\frac{1}{\sqrt{2}})$ and
\begin{equation*}
{\bf A}_1 =   {\bf B} \left( \begin{array}{cccc} 0 & -1 & 1 & 0 \\ 1 & 0 & 0 & 1 \\ -1 & 0 & 0 & 1 \\ 0 & 1 & 1 & 0 \end{array}\right) = \frac{1}{\sqrt{2}} \left(  \begin{array}{cccc} 0 & 0 & 0 & 0 \\ 1 & 0 & 0 & 1 \\ -1 & 0 & 0 & 1 \\ 0 & 1 & 1 & 0 \end{array}\right) .
\end{equation*}
We see that $ {\bf A}_{\text{TGV}} = {\bf Q} {\bf A}_1$ with the orthogonal matrix 
\begin{equation*}
{\bf Q} = \frac{1}{\sqrt{2}} \left(  \begin{array}{cccc} 0 & 1 & -1 & 0 \\ 1 & 0 & 0 & 1 \\ -1 & 0 & 0 & 1 \\ 0 & 1 & 1 & 0 \end{array}\right).
\end{equation*}
Hence, the TGV functional can be considered as a special case of \eqref{eq:unifiedRegulariser} with ${\bm \beta}=(0,\frac{1}{\sqrt{2}},\frac{1}{\sqrt{2}},\frac{1}{\sqrt{2}})$.

%

\subsubsection*{TGV with full gradient matrix}

A variant of the second-order TGV model is given by using the full gradient instead of the symmetric gradient, i.e.
\begin{align*}
\text{TGVF}(u) = \inf_{w \in \mathcal{M}(\Omega,\mathbb{R}^2)} \Vert \nabla u - w \Vert_{\mathcal{M}(\Omega,\mathbb{R}^2)} + \Vert \nabla w \Vert_{\mathcal{M}(\Omega,\mathbb{R}^4)}.
\end{align*}
 This can simply be encoded via ${\bf A}_{\text{TGVF}} = {\bf I}$ being the unit matrix in $\R^{4\times 4}$. Choosing ${\bf B} = \frac{1}{\sqrt{2}} {\bf I}$ we immediately see the equivalence, since
\begin{equation*}
{\bf A}_1 =   {\bf B} \left( \begin{array}{cccc} 0 & -1 & 1 & 0 \\ 1 & 0 & 0 & 1 \\ -1 & 0 & 0 & 1 \\ 0 & 1 & 1 & 0 \end{array}\right) = \frac{1}{\sqrt{2}} \left( \begin{array}{cccc} 0 & -1 & 1 & 0 \\ 1 & 0 & 0 & 1 \\ -1 & 0 & 0 & 1 \\ 0 & 1 & 1 & 0 \end{array}\right)
\end{equation*}
is already an orthogonal matrix and we obtain ${\bf A}_{\text{TGVF}} = {\bf A}_1^{\top} {\bf A}_1$. Hence, the TGV functional with full gradient matrix can be considered as a special case of \eqref{eq:unifiedRegulariser} with ${\bm \beta}=(\frac{1}{\sqrt{2}},\frac{1}{\sqrt{2}},\frac{1}{\sqrt{2}},\frac{1}{\sqrt{2}})$.

\subsubsection*{ICTV}

Let us now examine the relation to \eqref{eq:ICTV}, which rewritten in our notation becomes
\begin{align*}
\text{ICTV}(u) = \inf_{\substack{w \in \mathcal{M}(\Omega,\mathbb{R}^2)\\\mycurl(w) = 0}} \Vert \nabla u - w \Vert_{\mathcal{M}(\Omega,\mathbb{R}^2)} + \Vert \nabla w \Vert_{\mathcal{M}(\Omega,\mathbb{R}^4)}.
\end{align*}
In this case we do not need to distinguish between the gradient of $w$ and the symmetric gradient, since they are equal due to the vanishing curl.
Note that we have replaced the assumption of $w$ being a gradient by the equivalent assumption of vanishing curl, which corresponds better to our approach and indicates that we will need to consider the limit $\beta_1 \rightarrow \infty$.  Not surprisingly we will choose $\beta_2=\beta_3=\beta_4=\frac{1}{2} $ as in the TGV case. Thus, we will study the limit of $\beta_1 \rightarrow \infty$, using the notion of $\Gamma$-convergence (\cite{braides2002gamma,dal2012introduction}):

\begin{thm} \label{ictvthm}
Let $\beta_2=\beta_3=\beta_4=\frac{1}{2}$. We define ${\bm{\beta}^t}:=(t,\frac{1}{\sqrt{2}}, \frac{1}{\sqrt{2}},\frac{1}{\sqrt{2}})$, $t > 0$.  Then $R_{{\bm \beta}^t}$ $\Gamma$-converges to \textup{ICTV} strongly in $L^p(\Omega)$ for any $p < 2$ as $t\rightarrow \infty$, where we extend both functionals by infinity on $L^p(\Omega) \setminus BV(\Omega)$.
\end{thm}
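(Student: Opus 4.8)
The plan is to verify the two defining inequalities of $\Gamma$-convergence in the $L^p$-topology separately: the $\limsup$-inequality (existence of a recovery sequence) and the $\liminf$-inequality (lower bound along arbitrary converging sequences). Throughout I would exploit the algebraic fact already used in the TGVF computation above, namely that $\tfrac{1}{\sqrt2}\bm{M}$, with $\bm{M}$ the fixed matrix relating $\nabla_N w = \bm{M}\nabla_V w$, is orthogonal. Since the Frobenius norm and hence the total variation of a vector measure are invariant under a fixed orthogonal transformation, one has $\Vert\nabla_N w\Vert_{\mathcal{M}} = \sqrt2\,\Vert\nabla w\Vert_{\mathcal{M}}$. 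In particular, whenever $\mycurl(w)=0$ the weighting $\bm{\beta}^t$ gives $\Vert\diag(\bm{\beta}^t)\nabla_N w\Vert_{\mathcal{M}} = \tfrac{1}{\sqrt2}\Vert(\mydiv w,\mysheara w,\myshearb w)\Vert_{\mathcal{M}} = \Vert\nabla w\Vert_{\mathcal{M}}$, independently of $t$, which matches the ICTV integrand exactly.

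For the recovery sequence I would simply take the constant sequence $u_t\equiv u$, which converges to $u$ in $L^p$ for every $p<2$ (recall $BV(\Omega)\hookrightarrow L^2$ in two dimensions, and there is nothing to prove if $u\notin BV$). The point is that the fields admissible in ICTV (those with $\mycurl(w)=0$) form a subset of the fields admissible in $R_{\bm{\beta}^t}$, and on this subset the two integrands coincide by the identity above. Restricting the infimum defining $R_{\bm{\beta}^t}(u)$ to curl-free competitors therefore yields $R_{\bm{\beta}^t}(u)\le\mathrm{ICTV}(u)$ for every $t>0$, whence $\limsup_{t\to\infty}R_{\bm{\beta}^t}(u)\le\mathrm{ICTV}(u)$ at once.

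The substance lies in the $\liminf$-inequality. Given $u_t\to u$ in $L^p$, I may pass to a subsequence realising the $\liminf$ and assume $R_{\bm{\beta}^t}(u_t)\le C$. For each $t$ let $w_t$ be the minimiser granted by the exactness lemma, so that $\Vert\nabla u_t-w_t\Vert_{\mathcal{M}}\le C$, $t\Vert\mycurl w_t\Vert_{\mathcal{M}}\le C$, and $\tfrac{1}{\sqrt2}\Vert(\mydiv w_t,\mysheara w_t,\myshearb w_t)\Vert_{\mathcal{M}}\le C$. The second bound forces $\Vert\mycurl w_t\Vert_{\mathcal{M}}\le C/t\to 0$, and together with the third and the orthogonality relation it bounds $\nabla w_t$, i.e.\ $|w_t|_{BV}$, uniformly. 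To obtain compactness of $w_t$ itself, and not merely modulo the nullspace of $\diag(\bm{\beta}^t)\nabla_N$, I would first bound $u_t$ in $BV$: since $R_{\bm{\beta}}$ is monotone in each component of $\bm{\beta}$, for $t\ge 1$ we have $R_{\bm{\beta}^t}(u_t)\ge R_{\bm{\beta}^1}(u_t)$ with $\bm{\beta}^1=(1,\tfrac{1}{\sqrt2},\tfrac{1}{\sqrt2},\tfrac{1}{\sqrt2})$, and the latter is coercive with a $t$-independent constant on the complement of its nullspace (the affine functions) by the coercivity lemma built on Lemma~\ref{lemma:Korn}. Controlling the affine part of $u_t$ through the $L^p$-convergence and finite-dimensionality then yields a uniform $BV$-bound on $u_t$, hence $\nabla u_t$ is bounded in $\mathcal{M}$, and from $w_t=\nabla u_t-(\nabla u_t-w_t)$ so is $w_t$. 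Thus $w_t$ is bounded in $BV(\Omega,\mathbb{R}^2)$ and, along a further subsequence, $w_t\rightharpoonup^* w$ with $\nabla w_t\rightharpoonup^*\nabla w$, while $u_t\rightharpoonup^* u$ in $BV$.

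It then remains to pass to the limit. Weak-$*$ convergence together with $\Vert\mycurl w_t\Vert_{\mathcal{M}}\to 0$ gives $\mycurl w=0$, so $w$ is admissible for ICTV. Using lower semicontinuity of the Radon norm under weak-$*$ convergence for each summand, superadditivity of the $\liminf$, the bound $\Vert\diag(\bm{\beta}^t)\nabla_N w_t\Vert_{\mathcal{M}}\ge\tfrac{1}{\sqrt2}\Vert(\mydiv w_t,\mysheara w_t,\myshearb w_t)\Vert_{\mathcal{M}}$ from dropping the curl component, and finally the identity $\tfrac{1}{\sqrt2}\Vert(\mydiv w,\mysheara w,\myshearb w)\Vert_{\mathcal{M}}=\Vert\nabla w\Vert_{\mathcal{M}}$ valid because $\mycurl w=0$, I would conclude
\[
\liminf_{t\to\infty} R_{\bm{\beta}^t}(u_t)\ \ge\ \Vert\nabla u-w\Vert_{\mathcal{M}}+\Vert\nabla w\Vert_{\mathcal{M}}\ \ge\ \mathrm{ICTV}(u).
\]
The main obstacle is the compactness step: the Korn inequality only controls $w_t$ modulo the rotation/translation nullspace, whose coefficients must be pinned down uniformly as $t\to\infty$. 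The monotonicity-in-$\bm{\beta}$ reduction to the fixed coercive functional $R_{\bm{\beta}^1}$ is precisely what renders the constants $t$-independent, and this is the point that requires the most care.
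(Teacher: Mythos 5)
Your proposal is correct and follows essentially the same route as the paper's proof: the constant recovery sequence with restriction of the infimum to curl-free competitors for the upper bound, and for the lower bound the extraction of minimisers $w_t$, the estimate $t\Vert\mycurl(w_t)\Vert_{\mathcal{M}(\Omega)}\leq R_{{\bm\beta}^t}(u_t)$ forcing $\mycurl(w_t)\to 0$, dropping the curl component of the weighted norm, and weak-star compactness plus lower semicontinuity to identify a curl-free limit $w$. The only substantive difference is that you explicitly justify the uniform boundedness of $w_t$ (via monotonicity of $R_{\bm\beta}$ in $\bm\beta$, the Korn-based coercivity of $R_{{\bm\beta}^1}$ off its finite-dimensional nullspace, and control of the affine part through the $L^p$-convergence), a step the paper asserts without detail, so your write-up is if anything more complete on that point.
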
 
\begin{proof}
Let $t > 0$, $u_t \in BV(\Omega)$ and let $w_t \in \mathcal{M}(\Omega,\mathbb{R}^2)$ be a minimiser of 
\begin{equation*}
\Vert \nabla u_t - w \Vert_{\mathcal{M}(\Omega,\mathbb{R}^2)} + 
\Vert {\bf B}^t \nabla_N w \Vert_{\mathcal{M}(\Omega,\mathbb{R}^4)}
\end{equation*}
with ${\bf B}^t$ being the diagonal matrix with diagonal ${\bm \beta}^t$.
First, we consider the lower bound inequality.
To this end, we assume $u_t \rightarrow u$ strongly in $L^p(\Omega)$. Then we either have $\lim \inf_t 
R_{{\bm \beta}^t}(u_t) =  \infty$, which makes the lower bound inequality trivial, or $R_{{\bm \beta}^t}(u_t) $ bounded. If  $\lim\inf  R_{{\bm \beta}^t}(u_t)$ is finite, we immediately see from the norm equivalence and lower semicontinuity of the total variation that the limit $u$ has finite norm in $BV(\Omega)$. Hence, for $u \in L^p(\Omega) \setminus BV(\Omega)$ the lower bound inequality holds.
Thus, let us consider the remaining case of the limit inferior being finite and $u \in BV(\Omega)$. Then we see that 
\begin{align*}
{t} \Vert \mycurl (w_t) \Vert_{\mathcal{M}(\Omega)} \leq \Vert {\bf B}^t \nabla_N w \Vert_{\mathcal{M}(\Omega,\mathbb{R}^4)} \leq R_{{\bm \beta}^t}(u_t),
\end{align*}
which implies that $ \mycurl (w_t)$ strongly converges to zero in ${\cal M}(\Omega)$. 
Since $$ \Vert {\bf B}^t \nabla_N w \Vert_{\mathcal{M}(\Omega,\mathbb{R}^4)} \geq
\Vert {\bf B}^0 \nabla_N w  \Vert_{\mathcal{M}(\Omega,\mathbb{R}^4)}$$
for all $w$, 
we have
\begin{align*}
R_{{\bm \beta}^t}(u_t) & \; = \Vert \nabla u_t - w_t \Vert_{\mathcal{M}(\Omega,\mathbb{R}^2)} + 
\Vert {\bf B}^t \nabla_N w_t \Vert_{\mathcal{M}(\Omega,\mathbb{R}^4)}\\
& \; \geq 
\Vert \nabla u_t - w_t \Vert_{\mathcal{M}(\Omega,\mathbb{R}^2)} + 
\Vert {\bf B}^0 \nabla_N w_t \Vert_{\mathcal{M}(\Omega,\mathbb{R}^4)}.
\end{align*}
Due to the lower semicontinuity of the last term we see
\begin{align*}
\lim\inf_t  R_{{\bm \beta}^t}(u_t) & \; \geq \lim\inf_t \Vert \nabla u_t - w_t \Vert_{\mathcal{M}(\Omega,\mathbb{R}^2)} + \Vert {\bf B}^0 \nabla_N w_t \Vert_{\mathcal{M}(\Omega,\mathbb{R}^4)}\\
& \; \geq
\Vert \nabla u - w \Vert_{\mathcal{M}(\Omega,\mathbb{R}^2)} + 
\Vert {\bf B}^0 \nabla_N w \Vert_{\mathcal{M}(\Omega,\mathbb{R}^4)}, 
\end{align*}
where $w$ is a weak-star limit of an appropriate subsequence of $w_t$. The latter exists due to the boundedness of $w_t$ and satisfies $\mycurl(w) = 0$. 
Since the infimum over all curl-free $w$ is at most as large, we obtain the lower bound inequality
\begin{equation*}
\lim\inf_t  R_{{\bm \beta}^t}(u_t)  \geq \text{ICTV}(u).
\end{equation*}
Next, we consider the upper bound inequality. 
For $u \in L^p(\Omega) \setminus BV(\Omega)$ the upper bound inequality follows trivially with the sequence $u_t = u$ for all $t$. The upper bound inequality for $u \in BV(\Omega)$ is also easy to verify since for such $u$ we have
$ R_{{\bm \beta}^t}(u) \leq \text{ICTV}(u) $
due to the fact that we obtain exactly ICTV$(u)$ when we restrict the infimum in the definition of $ R_{{\bm \beta}^t}(u)$ to the subset of curl-free $w$. 
\end{proof}

An interesting observation is that we interpolate the two TGV models as well as the ICTV model solely by the parameter $\beta_1$, from the TGV model with the symmetric gradient ($\beta_1=0$) over the one with the full gradient $(\beta_1=\frac{1}{2})$ to the ICTV model in the limit $\beta_1 \rightarrow \infty$. 

\subsubsection*{Interpolation between TGV and ICTV}

\begin{sloppypar}
In this paragraph, we illustrate the previously described ability of our approach to interpolate between the ICTV and TGV model by means of a numerical test case. To this end, we corrupted an image section of the parrot test image from the Kodak image database\footnote{\url{http://r0k.us/graphics/kodak/}} by Gaussian noise with zero mean and a variance of $0.05$. Next, we applied the proposed denoising model \eqref{eq:UnifiedModel} to the noisy image data, where we always chose $\alpha = \frac{1}{4}$ and $\beta_2 = \beta_3 = \beta_4 =\frac{1}{2}$ and varied $\beta_1$ as follows: In order to obtain a second-order \enquote{TGV-type} reconstruction, we set $\beta_1$ equal to zero. For the \enquote{ICTV-type} model recovery that we obtain if $\beta_1$ tends to infinity, we chose $\beta_1 = 10^{10}$. The corresponding denoising results are depicted in the left and right column of the first row of Figure \ref{fig:interpolation}. Additionally, we calculated the respective interpolated denoising result for $\beta_1 \in \lbrace 10^{-4}, 10^{-3},10^{-2},10^{-1}, 0.25, 1, 2, 4, 25, 100, 2500, 10^{4}, 10^{6}\rbrace$, where $\beta_1=25$ yielded the best result with respect to the quality measure SSIM. The corresponding denoised image is shown in the middle of the top row of Figure \ref{fig:interpolation}. It is a well-known fact that the TGV and the ICTV model yield results of comparable quality and thus it is not surprising that all three denoising results as well as the error images in the second row of Figure \ref{fig:interpolation} look very similar. This visual inspection is further confirmed by the quality measure SSIM, since the differences are only in the range of $10^{-3}$. To point out that there are indeed slight differences between these denoising results, we also provide difference images between the result for $\beta_1=25$ and the TGV respectively the ICTV result in the third row of Figure \ref{fig:interpolation}.
While in the first rows of Figure \ref{fig:interpolation} we can hardly recognise any visual difference between the results of the three methods under consideration, the lower four rows of Figure \ref{fig:interpolation} reveal that in some sense the interpolated model is indeed in between the TGV and the ICTV model. 
In these rows, we plot the four different operators \eqref{eq:defCurl}-\eqref{eq:defShear2} applied to the vector field $w$ corresponding to the \enquote{TGV-type}, \enquote{interpolated} and \enquote{ICTV-type} reconstruction in the top row, respectively. Looking at these results, we can observe that the plots of the divergence and the two components of the shear apparently are rather similar and exhibit the same structures. On the other hand, the plot of the curl of the ICTV-type model seems to be very close to zero in the whole image domain, while in the curl of the interpolated model slight structures become visible, which are even more evident in the respective plot of the TGV model, exactly as expected.
\end{sloppypar}


\begin{figure}
\centering
\setlength{\tabcolsep}{1mm}
\begin{tabular}{B{0.55cm}B{0.25\textwidth}B{0.25\textwidth}B{0.25\textwidth}}
& {\it {\bf TGV}} & {\it {\bf Interpolated}} & {\it {\bf ICTV}} \\
\rotatebox[origin=c]{90}{{\bf recon}}&
\includegraphics[height=0.125\textheight]{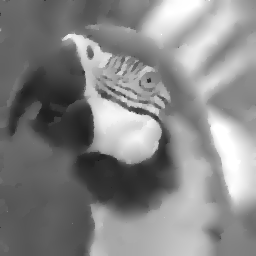}&
\includegraphics[height=0.125\textheight]{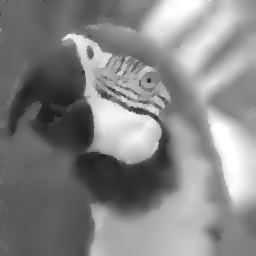}&
\includegraphics[height=0.125\textheight]{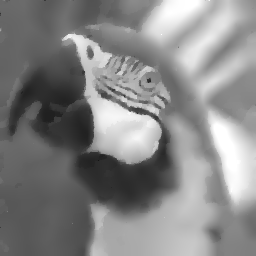}\\
& $\text{SSIM} = 0.78918$ & $\text{SSIM} = 0.79032$ & $\text{SSIM} = 0.78792$\\			
\rotatebox[origin=c]{90}{{\bf error img}}&
\includegraphics[height=0.12\textheight]{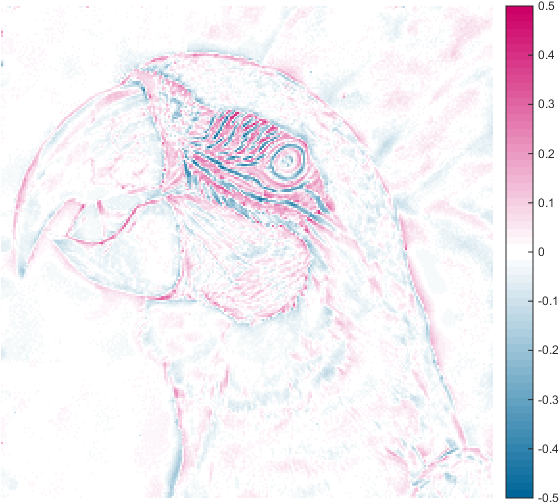}&
\includegraphics[height=0.12\textheight]{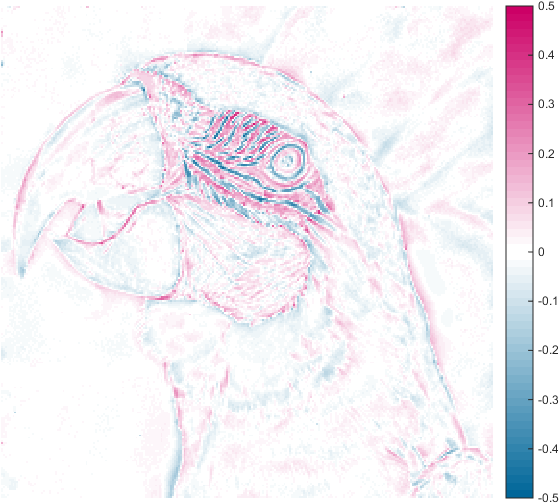}&
\includegraphics[height=0.12\textheight]{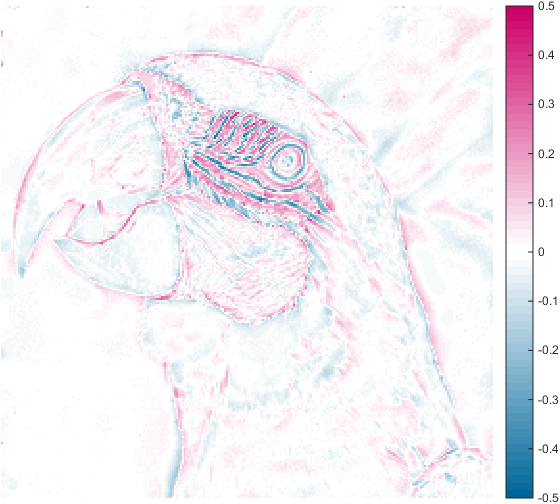}\\
& & & \\			
\rotatebox[origin=c]{90}{{\bf difference img}}&
\includegraphics[height=0.12\textheight]{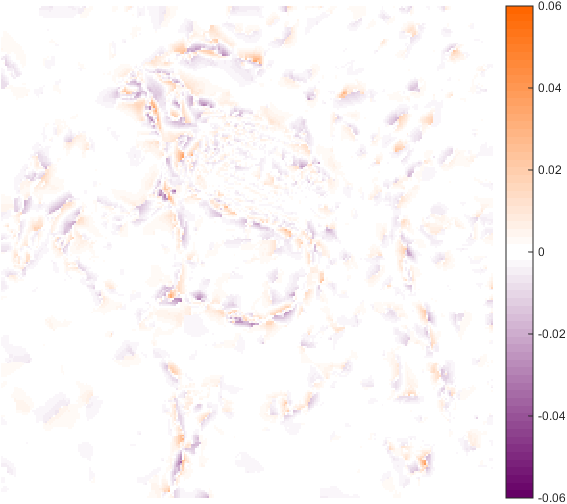}&
&
\includegraphics[height=0.12\textheight]{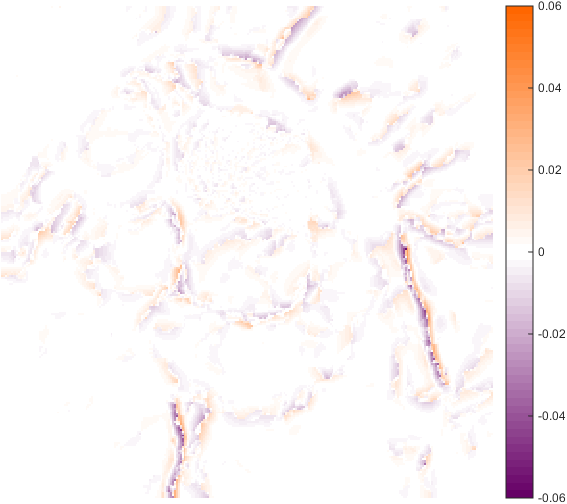}\\
& Interpolated - TGV & & Interpolated - ICTV\\
\\			
\rotatebox[origin=c]{90}{{\bf $\text{curl}(\boldsymbol{w})$}}&
\includegraphics[height=0.12\textheight]{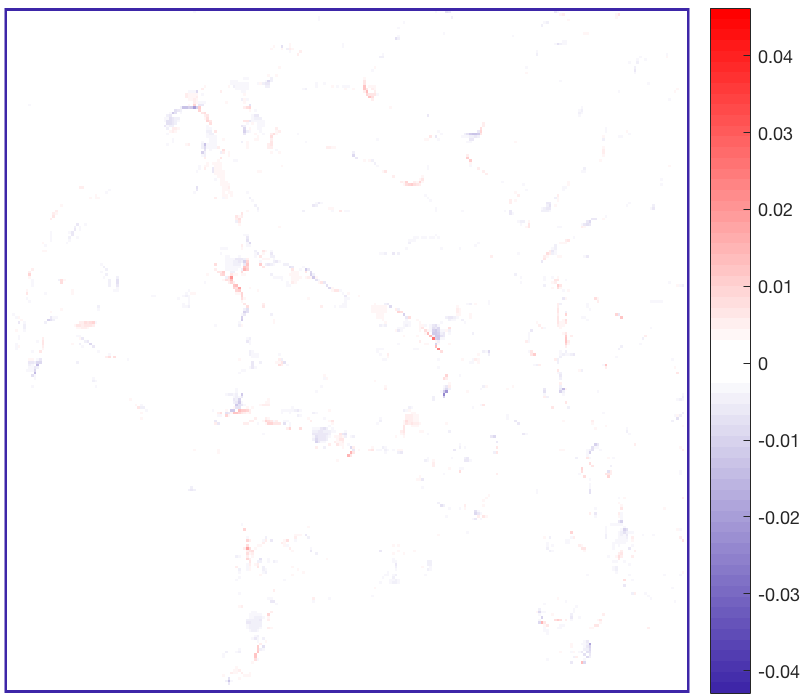}&
\includegraphics[height=0.12\textheight]{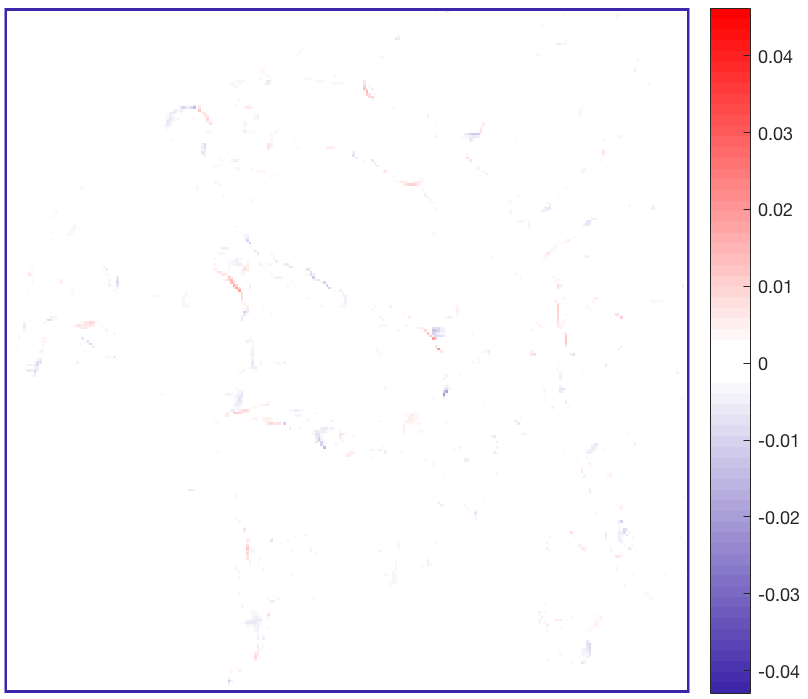}&
\includegraphics[height=0.12\textheight]{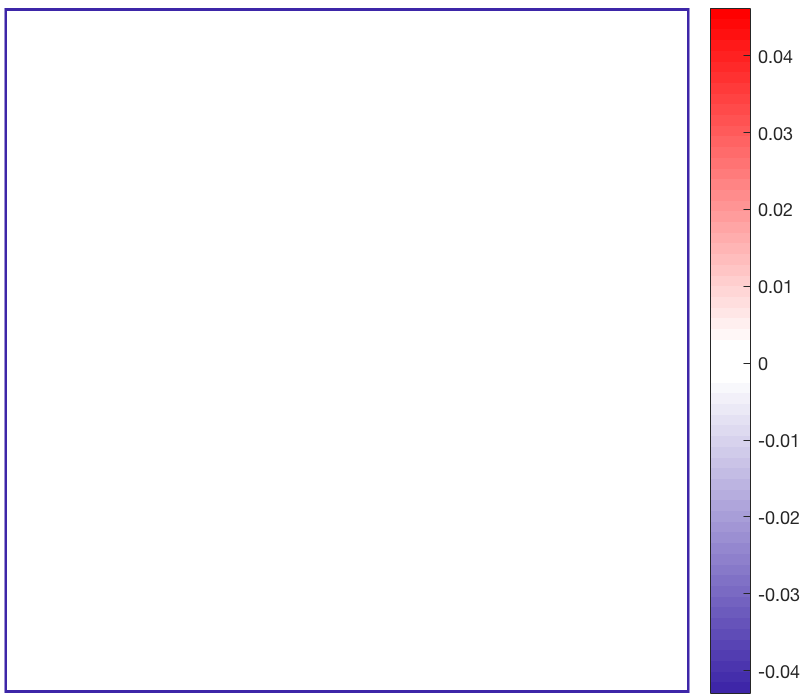}\\
\rotatebox[origin=c]{90}{{\bf $\text{div}(\boldsymbol{w})$}}&
\includegraphics[height=0.12\textheight]{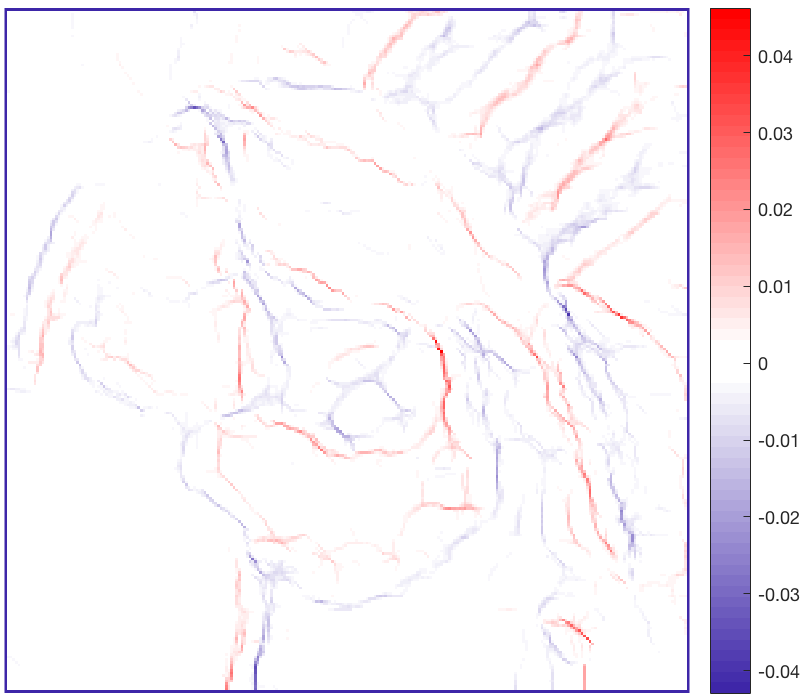}&
\includegraphics[height=0.12\textheight]{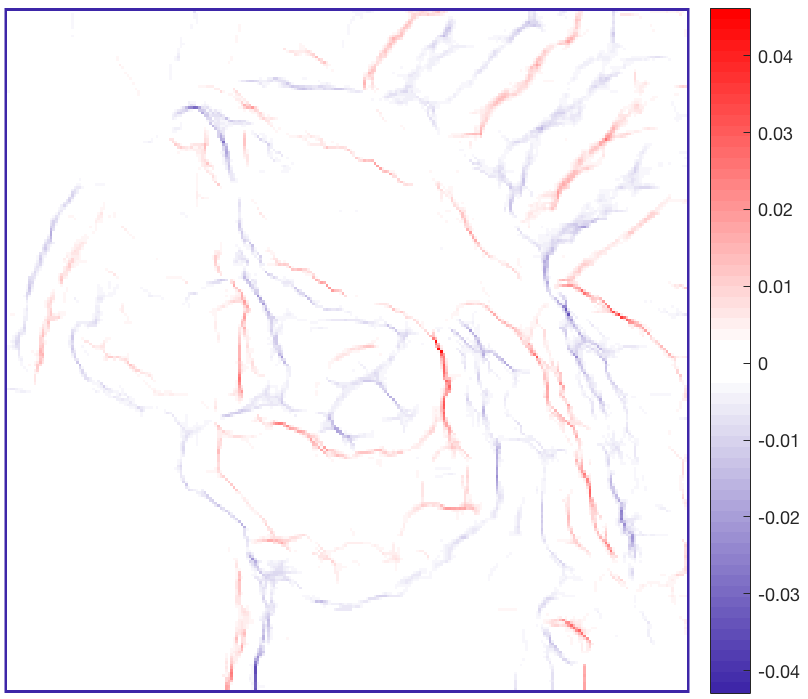}&
\includegraphics[height=0.12\textheight]{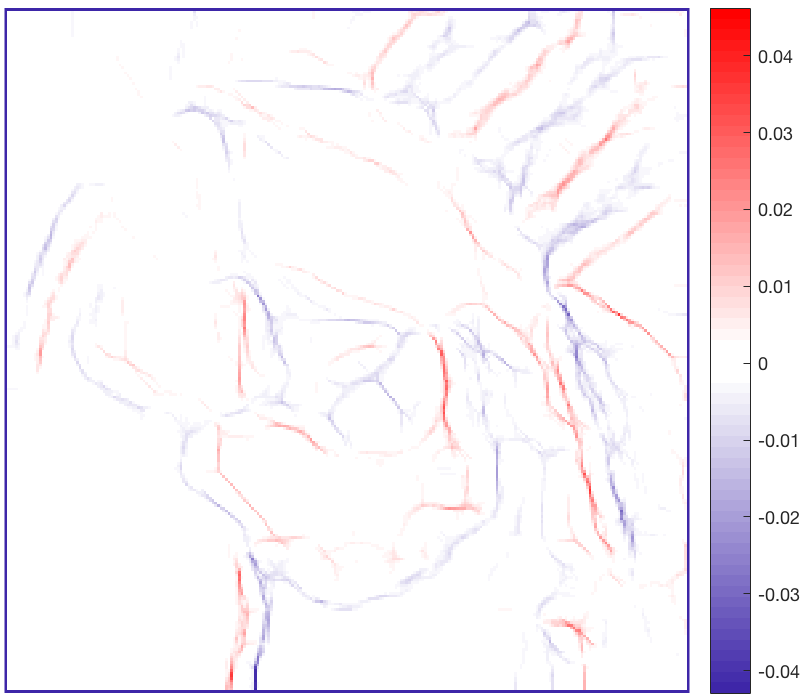}\\
\rotatebox[origin=c]{90}{{\bf $\text{sh}_1(\boldsymbol{w})$}}&
\includegraphics[height=0.12\textheight]{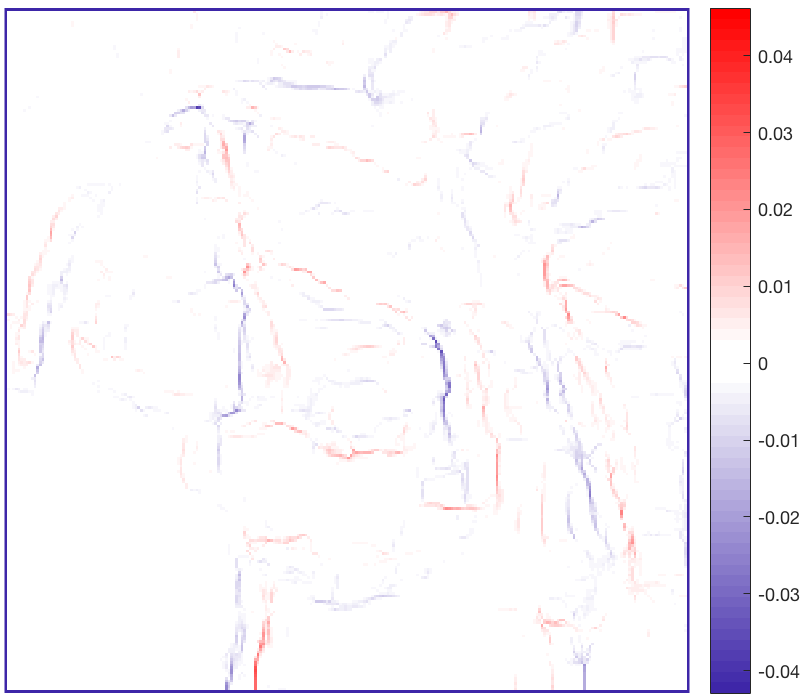}&
\includegraphics[height=0.12\textheight]{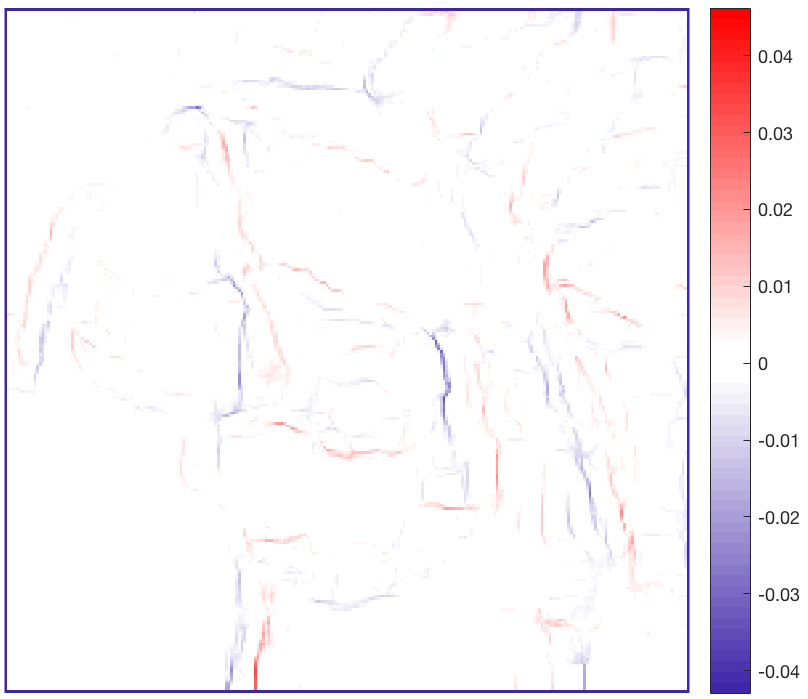}&
\includegraphics[height=0.12\textheight]{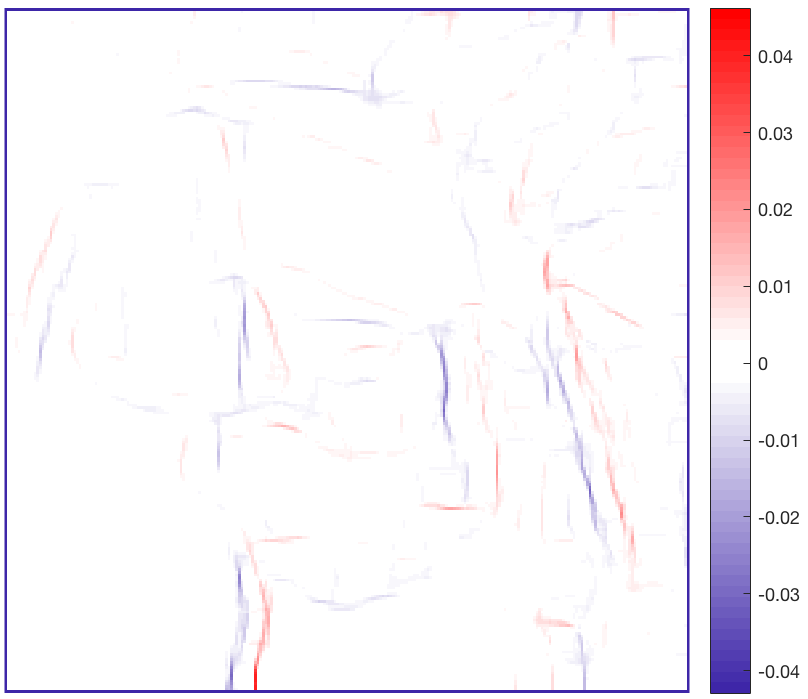}\\
\rotatebox[origin=c]{90}{{\bf $\text{sh}_2(\boldsymbol{w})$}}&
\includegraphics[height=0.12\textheight]{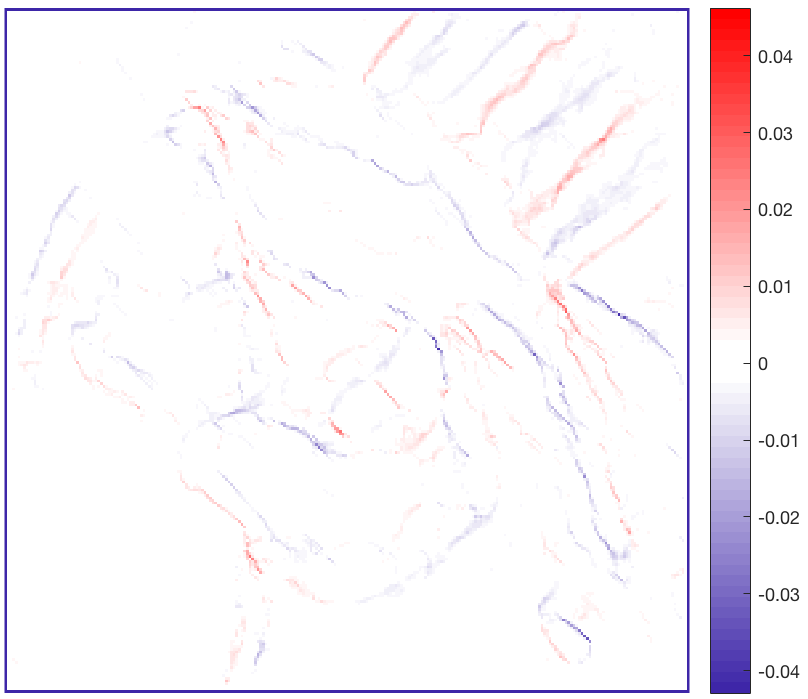}&
\includegraphics[height=0.12\textheight]{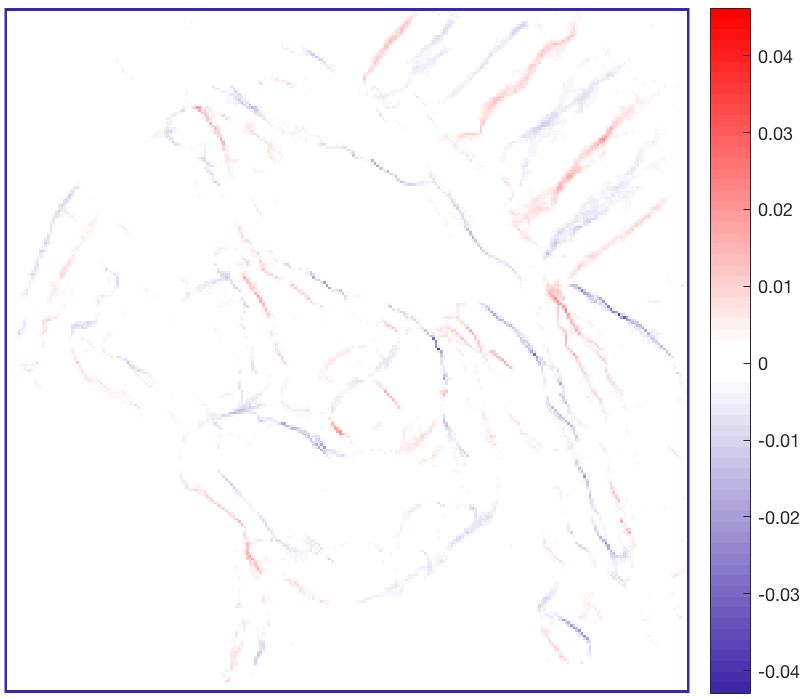}&
\includegraphics[height=0.12\textheight]{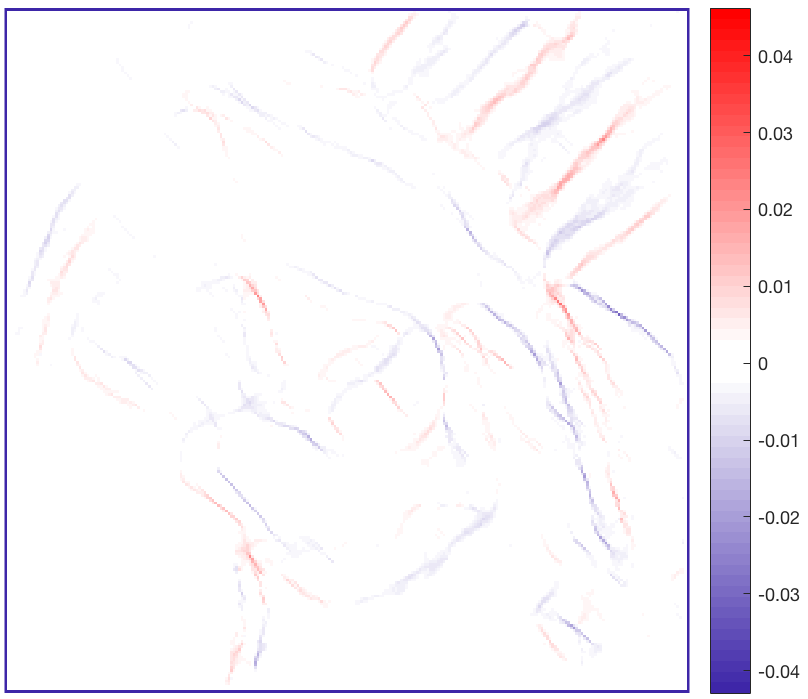}
\end{tabular}
\caption{Reconstruction of parrot test image adding Gaussian noise with zero mean and variance $\sigma^2 = 0.05$ using \eqref{eq:UnifiedModel} demonstrating the ability to interpolate between \eqref{eq:TGV} and \eqref{eq:ICTV}. Top three rows: denoised images $u$, error image showing the difference to the ground truth, difference image to the interpolated result. Lower four rows: different differential operators applied to vector field $w$.}
\label{fig:interpolation}
\end{figure}

\subsubsection*{CEP - Model}

The CEP model   \eqref{eq:CEP} can be rewritten in our notation as
\begin{align*}
\text{CEP}(u) = \inf_{\substack{w \in \mathcal{M}(\Omega,\mathbb{R}^2)\\ \mycurl(w) = 0}} \Vert \nabla u - w \Vert_{\mathcal{M}(\Omega,\mathbb{R}^2)} + \Vert \mydiv (w) \Vert_{\mathcal{M}(\Omega)}.
\end{align*}
It is apparent in this case to choose ${\beta}^t:=(t,1,0,0)$ and to again consider the limit $t \rightarrow \infty$ to recover CEP as a limit of $R_{{\bm \beta}^t}$. However, here we are in a situation where more than one of the parameters $\beta_i$ vanishes, thus we cannot guarantee the existence of a minimiser for such a model and consequently we cannot perform a rigorous analysis of the limit in $BV(\Omega)$. In the denoising case \eqref{eq:UnifiedModel} one could still perform a convergence analysis for the functional including the data term with respect to weak $L^2$ convergence, which is however not in the scope of our approach. 

From the issues in the analysis and our previous discussion of artefacts when only using $\mydiv$ and $\mycurl$ in the regularisation functional it is also to be expected that the CEP model produces some kind of point artefacts. Indeed those can be seen by close inspection of the results in \cite{CEP}, in particular Figure 4.

\subsubsection*{TV and Variants}
 
We finally verify the relation of our model to the original total variation, which is of course to be expected as the parameters $\beta_i$ converge to infinity. This is made precise by the following theorem, from which we see the $\Gamma$-convergence except on the finite-dimensional nullspace. The proof is analogous to Theorem \ref{ictvthm} and omitted here.
 
\begin{thm}
Let $\beta_i \geq 0$ for $i=1,\ldots,4$ and let at most one of them vanish. Set ${\bf B}=\text{diag}(\sqrt{\beta_1},\ldots,\sqrt{\beta_4})$ and ${\bm \beta}^t = t {\bm \beta}$, ${\bf B}^t = t {\bf B}$. Then $R_{{\bm \beta}^t}$ $\Gamma$-converges to $\textup{TV}_{\textup{B}}$ strongly in $L^p(\Omega)$ for any $p < 2$ as $t\rightarrow \infty$, where
\begin{equation*}
\textup{TV}_{\textup{B}}= \inf_{\overline{w}, B \nabla_N \overline{w} = 0} \Vert \nabla u - \overline{w} \Vert_{\mathcal{M}(\Omega,\mathbb{R}^2)}
\end{equation*}
and we extend both functionals by infinity on $L^p(\Omega) \setminus BV(\Omega)$. 
\end{thm}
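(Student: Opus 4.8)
The plan is to verify the two $\Gamma$-convergence inequalities separately, following the template of the proof of Theorem~\ref{ictvthm}; the only structural difference is that here the \emph{entire} weighting vector is scaled by $t$, so that in the limit \emph{all} penalised operators are forced to vanish and the effective constraint becomes ${\bf B}\nabla_N w = 0$ rather than merely $\mycurl(w)=0$. Throughout I would exploit that $R_{{\bm \beta}^t}$ is monotone non-decreasing in $t$, since for fixed $w$ the penalty $t\,\Vert {\bf B}\nabla_N w\Vert_{\mathcal{M}(\Omega,\mathbb{R}^4)}$ grows with $t$ and the infimum of an increasing family is increasing; in particular $R_{{\bm \beta}^t}\ge R_{{\bm \beta}}$ for $t\ge 1$.

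For the lower bound inequality I would fix $u_t\to u$ strongly in $L^p(\Omega)$ and let $w_t$ be a minimiser of the inner infimum defining $R_{{\bm \beta}^t}(u_t)$, whose existence is guaranteed by the exactness lemma proved above. If $\liminf_t R_{{\bm \beta}^t}(u_t)=\infty$ the inequality is trivial, so I would assume a uniform bound $R_{{\bm \beta}^t}(u_t)\le C$ along a subsequence. Since at most one $\beta_i$ vanishes, the monotonicity gives $R_{{\bm \beta}^t}(u_t)\ge R_{{\bm \beta}}(u_t)$, and the norm equivalence $R_{{\bm \beta}}(u)\ge c\,|u|_{BV}$ off the finite-dimensional nullspace (together with the $L^p$-convergence controlling the nullspace component) yields a uniform $BV$-bound on $u_t$; lower semicontinuity of the total variation then gives $u\in BV(\Omega)$, which disposes of the case $u\in L^p(\Omega)\setminus BV(\Omega)$. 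The decisive step is then the estimate $t\,\Vert {\bf B}\nabla_N w_t\Vert_{\mathcal{M}(\Omega,\mathbb{R}^4)}\le R_{{\bm \beta}^t}(u_t)\le C$, which forces ${\bf B}\nabla_N w_t\to 0$ strongly in $\mathcal{M}(\Omega,\mathbb{R}^4)$. As $\Vert\nabla u_t-w_t\Vert_{\mathcal{M}(\Omega,\mathbb{R}^2)}\le C$ and $\nabla u_t$ is bounded, $w_t$ is bounded and admits a weak-star limit $w$; by continuity of ${\bf B}\nabla_N$ on distributions the limit satisfies ${\bf B}\nabla_N w=0$. Weak-star lower semicontinuity of the Radon norm applied to $\Vert\nabla u_t-w_t\Vert$, using $u_t\rightharpoonup^\ast u$ in $BV(\Omega)$, then yields $\liminf_t R_{{\bm \beta}^t}(u_t)\ge \Vert\nabla u-w\Vert_{\mathcal{M}(\Omega,\mathbb{R}^2)}\ge \textup{TV}_{\textup{B}}(u)$, the last inequality holding because $w$ is admissible in the infimum defining $\textup{TV}_{\textup{B}}$.

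For the upper bound inequality I would use the constant recovery sequence $u_t=u$. For $u\in L^p(\Omega)\setminus BV(\Omega)$ both sides are infinite, and for $u\in BV(\Omega)$ I would restrict the unconstrained infimum defining $R_{{\bm \beta}^t}(u)$ to those $w$ with ${\bf B}\nabla_N w=0$, on which the penalty term vanishes identically. This immediately gives $R_{{\bm \beta}^t}(u)\le \textup{TV}_{\textup{B}}(u)$ for every $t$, and hence $\limsup_t R_{{\bm \beta}^t}(u)\le \textup{TV}_{\textup{B}}(u)$, independently of whether the infimum defining $\textup{TV}_{\textup{B}}$ is attained.

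The step I expect to be the main obstacle is the equicoercivity argument in the lower bound, namely arguing that a bound on $R_{{\bm \beta}^t}(u_t)$ really controls $|u_t|_{BV}$ including its nullspace part. This is precisely where the hypothesis that at most one $\beta_i$ vanishes is indispensable: it guarantees, via the Korn-type inequality of Lemma~\ref{lemma:Korn}, that $R_{{\bm \beta}}$ is equivalent to the $BV$-seminorm on the complement of the finite-dimensional nullspace, while the nullspace component is controlled separately by the assumed strong $L^p$-convergence. All remaining arguments are routine adaptations of the ICTV case, and attainment of $\textup{TV}_{\textup{B}}$ is never needed since only inequalities between infima enter.
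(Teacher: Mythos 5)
Your proposal is correct and is precisely the adaptation the paper intends: the paper omits the proof of this theorem, stating only that it is analogous to Theorem~\ref{ictvthm}, and your argument carries out that analogy faithfully (lower bound via $t\,\Vert \bm{B}\nabla_N w_t\Vert_{\mathcal{M}(\Omega,\mathbb{R}^4)}\le C$ forcing the constraint in the weak-star limit, upper bound via the constant recovery sequence and restriction of the infimum to the constraint set). Your additional care with the equicoercivity step --- using monotonicity in $t$, the Korn-type norm equivalence off the finite-dimensional nullspace, and the $L^p$-control of the nullspace component --- fills in exactly the details the ICTV proof leaves terse, so there is nothing to object to.
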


\subsection{Rotational Invariance}
\label{Subsec:rotational invariance} 
At the end of this section we show that by imposing a simple condition on the choice of the weighting parameters $\beta_1,\dots,\beta_4$ we can control the rotational invariance of the regulariser $R_{\bm \beta}(u)$.
\begin{thm}\label{rotinvthm}
Let $\beta_i \geq 0$ for $i = 1,\dots,4$ and let $\beta_3 = \beta_4$. Then the regulariser $R_{\bm \beta}(u)$ is rotationally invariant, i.e., for an orthonormal rotation matrix $\bm{Q} \in \mathbb{R}^{2\times2}$ with
\begin{equation*}
\bm{Q}(\theta) = \begin{pmatrix}
\cos(\theta) & -\sin(\theta)\\
\sin(\theta) & \cos(\theta)
\end{pmatrix} \quad \text{ for } \theta \in \left[0,2\pi\right)
\end{equation*}
and for $u \in BV(\Omega)$ it holds that $\check{u} \in BV(\Omega)$, where $\check{u}=u\circ \bm{Q}$, i.e. $\check{u}(x) = u(\bm{Q}x)$ for a.e. $x \in \Omega$, and
\begin{equation*}
R_{\bm \beta}(\check{u}) = R_{\bm \beta}(u).
\end{equation*}
\end{thm}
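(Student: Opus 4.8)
The plan is to work with the primal form \eqref{eq:unifiedRegulariser} and to exhibit, for every competitor $w$ in the infimum defining $R_{\bm\beta}(u)$, a matching competitor $\check w$ for $\check u$ that leaves both summands unchanged; by symmetry (replacing $\bm Q$ by $\bm Q^{-1}$) this gives equality of the two infima. Given $w \in \mathcal{M}(\Omega,\mathbb{R}^2)$, I would set $\check w(x) = \bm Q^\top w(\bm Q x)$, interpreted as the pushforward of the measure $w$ under $x \mapsto \bm Q^{-1}x$ followed by the linear action of $\bm Q^\top$; this lies in $\mathcal{M}(\Omega,\mathbb{R}^2)$ whenever $\bm Q\Omega = \Omega$, which is the natural setting for rotational invariance (e.g.\ $\Omega$ a disk centred at the origin, or the functional on $\mathbb{R}^2$). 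Since the chain rule gives $\nabla\check u(x) = \bm Q^\top(\nabla u)(\bm Q x)$ distributionally, we have $\nabla\check u - \check w = \bm Q^\top\big((\nabla u - w)\circ\bm Q\big)$; orthogonality of $\bm Q$ preserves the pointwise Euclidean norm, and the change of variables $y=\bm Q x$ (Jacobian one) yields $\Vert\nabla\check u - \check w\Vert_{\mathcal{M}(\Omega,\mathbb{R}^2)} = \Vert\nabla u - w\Vert_{\mathcal{M}(\Omega,\mathbb{R}^2)}$.

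The core of the argument is to track $\nabla_N$ under this substitution. The Jacobian conjugates as $J_{\check w}(x) = \bm Q^\top J_w(\bm Q x)\,\bm Q$, so it suffices to understand how the four scalar operators behave under $J \mapsto \bm Q^\top J\bm Q$. I would decompose any $2\times2$ matrix into trace part, antisymmetric part, and symmetric trace-free part, and note that these pieces are exactly $\mydiv$, $\mycurl$, and the pair $(\mysheara,\myshearb)$. Two elementary facts then do the work: the trace is conjugation-invariant, so $\mydiv(\check w)(x) = \mydiv(w)(\bm Q x)$; and the antisymmetric generator $\left(\begin{smallmatrix} 0 & -1\\ 1 & 0\end{smallmatrix}\right)$ commutes with every planar rotation, so $\mycurl(\check w)(x) = \mycurl(w)(\bm Q x)$. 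The decisive computation is that conjugation by $\bm Q(\theta)$ acts on the symmetric trace-free part as a planar rotation by $2\theta$; concretely, $(\mysheara(\check w),\myshearb(\check w))(x)$ is the image of $(\mysheara(w),\myshearb(w))(\bm Q x)$ under a rotation of angle $\pm 2\theta$.

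With this in hand the role of $\beta_3=\beta_4$ becomes transparent. The squared regularisation integrand is $\beta_1\mycurl(w)^2 + \beta_2\mydiv(w)^2 + \beta_3\,\mysheara(w)^2 + \beta_4\,\myshearb(w)^2$. The curl and divergence contributions are invariant by the two facts above, while the shear contribution equals $\beta_3\big(\mysheara(w)^2 + \myshearb(w)^2\big)$ \emph{precisely} when $\beta_3=\beta_4$; since this is the squared Euclidean length of $(\mysheara,\myshearb)$, which merely gets rotated by $2\theta$, it too is invariant. Hence $\vert\text{diag}(\bm\beta)\nabla_N\check w\vert(x) = \vert\text{diag}(\bm\beta)\nabla_N w\vert(\bm Q x)$ pointwise, and the change of variables gives equality of the $\mathcal{M}(\Omega,\mathbb{R}^4)$-norms. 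Taking the infimum over $w$ yields $R_{\bm\beta}(\check u) \le R_{\bm\beta}(u)$, and the same reasoning with $\bm Q^{-1}$ gives the reverse inequality; $\check u \in BV(\Omega)$ follows since $R_{\bm\beta}(\check u)$ and $\Vert\check u\Vert_{L^1}$ remain finite.

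The step I expect to be the main obstacle is making the transformation of $\nabla_N$ rigorous at the level of Radon measures rather than $C^1$ densities: one must read $\check w = \bm Q^\top w(\bm Q\cdot)$ as a pushforward and verify the operator identities (trace/antisymmetric invariance and the $2\theta$-rotation of the shear pair) inside the distributional pairing. The cleanest way to discharge this is to transfer everything onto the smooth test functions $\psi$ and $\varphi = \nabla_N^*(\text{diag}(\bm\beta)\psi)$ from the dual set $\mathcal{C}_{\bm\beta}$, where the same linear-algebra facts show that $\mathcal{C}_{\bm\beta}$ is mapped into itself under the induced change of variables, so no regularity of $w$ is needed. A secondary point requiring care is the domain compatibility $\bm Q\Omega = \Omega$ implicit in the statement, which is exactly why rotational invariance is genuinely a statement about rotationally symmetric domains.
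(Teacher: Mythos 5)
Your proposal is correct and follows essentially the same route as the paper's proof in Appendix B: the same substitution $\check{w} = \bm{Q}^{\top} w \circ \bm{Q}$, the same observation that $\mycurl$ and $\mydiv$ are invariant under the induced conjugation of the Jacobian while the pair $(\mysheara,\myshearb)$ rotates by $2\theta$, and the same use of $\beta_3=\beta_4$ to make the shear contribution the rotation-invariant Euclidean length of that pair. The only difference is presentational: you derive the transformation laws conceptually from the trace/antisymmetric/symmetric-trace-free decomposition of the Jacobian, whereas the paper carries out the explicit trigonometric computation; your added remarks on the measure-theoretic interpretation of $\check w$ and the implicit requirement $\bm{Q}\Omega=\Omega$ are sensible points the paper passes over lightly.
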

\begin{proof}
See Appendix \ref{app:proof of Theorem 4}.

\end{proof}


\section{Discretisation}
\label{sec:discretisation}
 
We devote a separate section of this paper to the discretisation of our novel approach and the contained natural vector operators as a basis for any numerical implementation. 
This seemed necessary, since we aim at complying not only with the standard requirement that it should hold that
\begin{equation}
\nabla^* (u) = - \text{div} (u),
\tag{adjG}
\label{eq:adjoint_structure_of_grad_div}
\end{equation}
but also with natural conservation laws such as
\begin{equation}
\text{curl} \left( \nabla u \right) = 0 \quad \text{and} \quad \text{div} \left( \text{curl}^* \left( u \right) \right) = 0
\tag{conservLaws}
\label{eq:conservation_laws}
\end{equation}
imposing additional constraints upon the choice of discretisation. 
We will use the finite differences-based discretisation proposed in the context of the congeneric second-order TGV-model \cite{TGV} as a starting point for our considerations in this section. However, as we shall see, the approach taken there fails to fulfil the aforementioned conservation laws \eqref{eq:conservation_laws}. 
As a consequence, we suggest a similar, yet in several places adjusted and thus different discretisation, which all numerical results of our unified model \eqref{eq:VOS} presented in this paper are based upon. 
We will compare solutions of the TGV-model \eqref{eq:TGV} obtained by means of the discretisation we suggest with images resulting from the discretisation proposed in \cite{TGV}.
Eventually, we will comment on chances and challenges of other discretisation strategies using staggered grids or finite element methods in the context of our unified model. 

Abusing notation, we denote the involved functions and operators in the same way as in the continuous setting before, but from now on, we are thereby referring to their discretised versions. For the sake of simplicity, we assume the normalised images to be quadratic, i.e.\ $f,u \in [0,1]^{N \times N}$. Then we discretise the image domain by a two-dimensional regular Cartesian grid of size $N \times N$, i.e.
$\Omega = \lbrace (ih,jh): 1 \leq i,j \leq N\rbrace$,
where $h$ denotes the spacing size and $(i,j)$ denote the discrete pixels at location $(ih,jh)$ in the image domain.
Similarly as in \cite{TGV} and as it is fairly customary in image processing, we use forward differences with Neumann boundary conditions to discretise the gradient $(\nabla)_{i,j}: \mathbb{R} \to \mathbb{R}^2$ of a scalar-valued function $u$, i.e.
\begin{equation}
(\nabla u)_{i,j} = \begin{pmatrix}(\nabla u)^1_{i,j}\\ (\nabla u)^2_{i,j}\end{pmatrix} = \begin{pmatrix}(\delta_{x+} u)_{i,j} \\ (\delta_{y+} u)_{i,j}\end{pmatrix},
\tag{discreteG}
\label{eq:discrete_gradient}
\end{equation}
where
\begin{align}
\begin{split}
	(\delta_{x+} u)_{i,j} = u_{i+1,j} - u_{i,j},\\
	(\delta_{y+} u)_{i,j} = u_{i,j+1} - u_{i,j},
\end{split}
\tag{forwDiff}
\label{eq:forward_differences}
\end{align}
and where we extend the definition by zero if $i=N$ respectively $j=N$.
To avoid asymmetries and to preserve the adjoint structure \eqref{eq:adjoint_structure_of_grad_div}, we discretise the first-order divergence operator $(\text{div})_{i,j}:\mathbb{R}^2 \to \mathbb{R}$ of a two-dimensional vector field $w_{i,j} = (w_{i,j}^1,w_{i,j}^2)^{\mathrm{T}}$ using backward finite differences with homogeneous Dirichlet boundary conditions, i.e.
\begin{equation}
(\text{div} (w))_{i,j} = (\delta_{x-} w^1)_{i,j} + (\delta_{y-} w^2)_{i,j},
\tag{discreteDiv}
\label{eq:discrete_divergence}
\end{equation}
where 
\begin{align}
(\delta_{x-} w^1)_{i,j} &=
\begin{cases}
w^1_{i,j} - w^1_{i-1,j}, & \text{if}\ 1 < i < N,\\
w^1_{i,j}, & \text{if}\ i = 1,\\
-w^1_{i-1,j}, & \text{if}\ i = N,
\end{cases}\notag \\
(\delta_{y-} w^2)_{i,j} &=
\begin{cases}
w^2_{i,j} - w^2_{i,j-1}, & \text{if}\ 1 < j < N,\\
w^2_{i,j}, & \text{if}\ j = 1,\\
-w^2_{i,j-1}, & \text{if}\ j = N.
\end{cases}
\tag{backwDiff}
\label{eq:backward_differences}
\end{align}

In \cite{TGV} the authors moreover proposed to recursively apply forward and backward differences to the divergence operator of higher order such that the outermost divergence operator is based on backward differences with homogeneous Dirichlet boundary conditions. For the second-order divergence operator $(\text{div}^2)_{i,j}: \mathbb{R}^{2 \times 2} \to \mathbb{R}$ of a symmetric $2 \times 2$-matrix $(g)_{i,j}$ at every pixel location $(i,j)$ (cf.\ \eqref{eq:first_and_second_order_divergence}) this means:
\begin{align*}
(\text{div}^2(g))_{i,j} & \; = (\delta_{x-}\delta_{x+} g_{11})_{i,j} + (\delta_{y-}\delta_{y+} g_{22})_{i,j} +  (\delta_{x-}\delta_{y+} g_{12})_{i,j} + (\delta_{y-}\delta_{x+} g_{21})_{i,j}\\
& \; = (\delta_{x-}\delta_{x+} g_{11})_{i,j} + (\delta_{y-}\delta_{y+} g_{22})_{i,j} +  ((\delta_{x-}\delta_{y+} + \delta_{y-}\delta_{x+}) g_{12})_{i,j}.
\end{align*}
\begin{sloppypar}
Further following the reasoning in \cite{TGV}, the discrete second-order derivative and discrete second-order divergence should also satisfy an adjointness condition. 
Consequently, we calculate the adjoint of $\text{div}^2$ in order to obtain the Hessian matrix of a scalar-valued function $u$. Symmetrisation of the Hessian then yields the following discretisation of the symmetrised second-order derivative $(\mathcal{E}^2)_{i,j} : \mathbb{R} \to \mathbb{R}^{2 \times 2}$:
\begin{align*}
(\mathcal{E}^2(u))_{i,j} = (\mathcal{E}(\nabla u))_{i,j} = \begin{pmatrix} (\delta_{x-} \delta_{x+} u)_{i,j} & \frac{\left( (\delta_{y-}\delta_{x+} + \delta_{x-}\delta_{y+}) u \right)_{i,j}}{2} \\ \frac{\left( (\delta_{x-}\delta_{y+} + \delta_{y-}\delta_{x+}) u \right)_{i,j}}{2} & (\delta_{y-} \delta_{y+} u)_{i,j} \end{pmatrix}, 
\end{align*}
where for the first equality we used that since $(\nabla u)_{i,j}$ is a (1,0)-tensor, or in other words a vector, the symmetrised gradient $\mathcal{E}$ of $u$ is just equal to the gradient.
To stay consistent, the symmetrised derivative $(\mathcal{E})_{i,j}:\mathbb{R}^2 \to \mathbb{R}^{2 \times 2}$ of a two-dimensional vector field $w_{i,j} = (w_{i,j}^1,w_{i,j}^2)^{\mathrm{T}}$ should thus be discretised in the following way:
\begin{align*}
(\mathcal{E}(w))_{i,j} = \begin{pmatrix} (\delta_{x-} w^1)_{i,j} & \frac{\left( \delta_{y-} w^1 + \delta_{x-} w^2\right)_{i,j}}{2} \\ \frac{ \left( \delta_{x-} w^1 + \delta_{y-} w^2 \right)_{i,j}}{2} & (\delta_{y-} w^2)_{i,j} \end{pmatrix}, 
\end{align*}
where $(\delta_{x-} w^2)_{i,j}$ and $(\delta_{y-} w^1)_{i,j}$ are defined analogously to \eqref{eq:backward_differences} with $w^1$ and $w^2$ being interchanged. 
We have thus recalled the choice of discretisation of the second-order divergence and hence of the symmetrised derivative as suggested in \cite{TGV}. 
\end{sloppypar}

\begin{sloppypar}
With regard to Section \ref{Subsec:special cases} we conclude that in this setting the most natural discretisations of the curl operator $(\text{curl})_{i,j}: \mathbb{R}^2 \to \mathbb{R}$ of a two-dimensional vector field $w_{i,j} = (w_{i,j}^1,w_{i,j}^2)^{\mathrm{T}}$ as well as of the two components of the shear $(\text{sh}_1)_{i,j}: \mathbb{R}^2 \to \mathbb{R}$ and $(\text{sh}_2)_{i,j}: \mathbb{R}^2 \to \mathbb{R}$ would all be based on backward finite differences with homogeneous Dirichlet boundary conditions, i.e.
\begin{alignat*}{3}
&(\text{curl} (w))_{i,j} &&= (\delta_{x-} w^2)_{i,j} - (\delta_{y-} w^1)_{i,j},\\
&(\text{sh}_1 (w))_{i,j} &&= (\delta_{y-} w^2)_{i,j} - (\delta_{x-} w^1)_{i,j},\\
&(\text{sh}_2 (w))_{i,j} &&= (\delta_{y-} w^1)_{i,j} + (\delta_{x-} w^2)_{i,j}.
\end{alignat*}
However, this discretisation of the curl operator fails to comply with the conservation laws given in \eqref{eq:conservation_laws}, since  
\begin{alignat*}{3}
&(\text{curl}(\nabla u))_{i,j} &&= (\delta_{x-} \delta_{y+} u )_{i,j} - (\delta_{y-} \delta_{x+} u)_{i,j},\\
&(\text{div}(\text{curl}^* (u)))_{i,j} &&= (\delta_{x-} \delta_{y+} u )_{i,j} - (\delta_{y-} \delta_{x+} u)_{i,j}
\end{alignat*}
can in general each be non-zero. To resolve this issue, we decided to instead discretise the curl operator 
$(\text{curl})_{i,j}: \mathbb{R}^2 \to \mathbb{R}$ of a two-dimensional vector field $w_{i,j} = (w_{i,j}^1,w_{i,j}^2)^{\mathrm{T}}$ with forward finite differences, i.e.
\begin{equation}
(\text{curl} (w))_{i,j} = (\delta_{x+} w^2)_{i,j} - (\delta_{y+} w^1)_{i,j}.
\tag{discreteCurl}
\label{eq:discrete_curl}
\end{equation}
In order to meet the theory derived for the continuous setting in Section \ref{Subsec:special cases} the discretisation of the curl operator by forward finite differences in combination with the discretisation of the divergence operator by backward finite differences requires that the first component of the shear $(\text{sh}_1)_{i,j}: \mathbb{R}^2 \to \mathbb{R}$ shall be discretised using backward finite differences while the second component $(\text{sh}_2)_{i,j}: \mathbb{R}^2 \to \mathbb{R}$ shall be discretised by means of forward finite differences, i.e.
\begin{alignat}{3}
&(\text{sh}_1 (w))_{i,j} &&= (\delta_{y-} w^2)_{i,j} - (\delta_{x-} w^1)_{i,j}, \tag{discreteSh1}\\
&(\text{sh}_2 (w))_{i,j} &&= (\delta_{y+} w^1)_{i,j} + (\delta_{x+} w^2)_{i,j} \tag{discreteSh2}.
\end{alignat}
As a side benefit of this choice of discretisation we additionally obtain the identities 
\begin{equation}
\text{sh}_1 \left( \text{sh}_2^* \left( u \right) \right) = 0 \quad \text{and} \quad \text{sh}_2 \left( \text{sh}_1^* \left( u \right) \right) = 0.
\tag{conservLaws2}
\label{eq:convervation_laws_2}
\end{equation}
Vice versa, this approch leads to the following discretisation of the symmetrised derivative $(\mathcal{E})_{i,j}:\mathbb{R}^2 \to \mathbb{R}^{2 \times 2}$ of a  vector field $w_{i,j} = (w_{i,j}^1,w_{i,j}^2)^{\mathrm{T}}$:
\begin{align}
(\mathcal{E}(w))_{i,j} = \begin{pmatrix} (\delta_{x-} w^1)_{i,j} & \frac{ \left( \delta_{y+} w^1 + \delta_{x+} w^2 \right)_{i,j}}{2} \\ \frac{ \left( \delta_{x+ }w^1 + \delta_{y+} w^2 \right)_{i,j}}{2} & (\delta_{y-} w^2)_{i,j} \end{pmatrix},
\tag{discreteSymG}
\label{eq:discrete_symmetrised_gradient}
\end{align}
that is we discretise the mixed derivatives differently than proposed in \cite{TGV}. 
Further following the line of argument brought forward in this section, the corresponding discrete second-order divergence operator $(\text{div}^2)_{i,j}: \mathbb{R}^{2 \times 2} \to \mathbb{R}$ of a symmetric $2 \times 2$-matrix $(g)_{i,j}$ at every pixel location $(i,j)$ (cf.\ \eqref{eq:first_and_second_order_divergence}) would be given by:
\begin{align*}
(\text{div}^2(g))_{i,j} = (\delta_{x-}\delta_{x+} g_{11})_{i,j} + (\delta_{y-}\delta_{y+} g_{22})_{i,j} +  ((\delta_{x+}\delta_{y+} + \delta_{y+}\delta_{x+}) g_{12})_{i,j}.
\end{align*}
Paraphrasing this discretisation, one could say that with respect to the pure second partial derivatives, i.e.\ the diagonal entries of the Hessian, we stick to the idea of recursively applying forward and backward differences as proposed by Bredies and coworkers \cite{TGV}, while in regard to the mixed partial derivatives we repeatedly use forward differences. 
Being aware that this discretisation of the second-order divergence might seem a little less intuitive than the one proposed in \cite{TGV}, we nevertheless decided to adhere to the discretisation that we introduced in this section. 
This is because in the context of our unified model it seems crucial to find a discretisation that preserves the nullspaces of the continuous model and complies with natural conservation laws such that for example choosing $\beta_1 > 0$ and $\beta_2 = \beta_3 = \beta_4 = 0$ indeed returns the noisy image $f$ as predicted by the theory for the continuous model. 
\end{sloppypar}

\begin{sloppypar}
To compare the effect of the two different discretisation schemes on the reconstructed images, we corrupted a test image from the Mc Master Dataset \cite{McMDataset} by Gaussian noise of mean 0 and variance 0.05 and calculated the denoising results obtained by means of the TGV$^2$ model \eqref{eq:TGV} with both discretisation approaches discussed in this section so far, the one proposed by Bredies and coworkers in \cite{TGV} as well as our alternative satisfying the natural conservation laws. 
\begin{figure}[t]
\centering
\setlength{\tabcolsep}{1mm}
\begin{tabular}{B{0.25cm}B{0.25\textwidth}B{0.25\textwidth}B{0.25\textwidth}}
& {\bf Proposed discretisation} & {\bf Discretisation in \cite{TGV}} & {\it {\bf Difference image}} \\
& & & \\
\rotatebox[origin=c]{90}{{\bf $500 \times 500$ pixels}}&
\includegraphics[height=4cm]{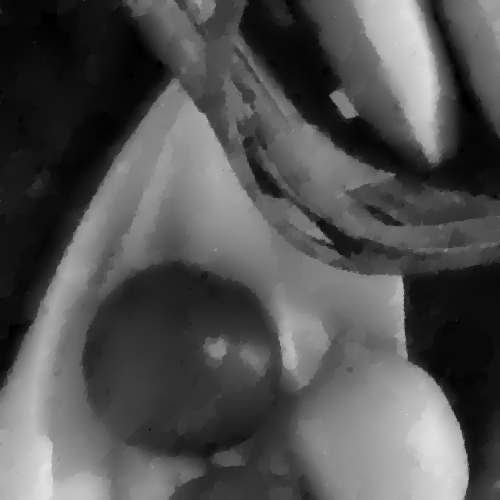}&
\includegraphics[height=4cm]{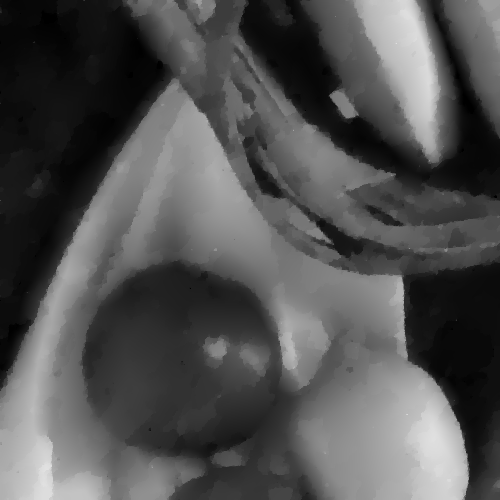}&
\includegraphics[height=4cm]{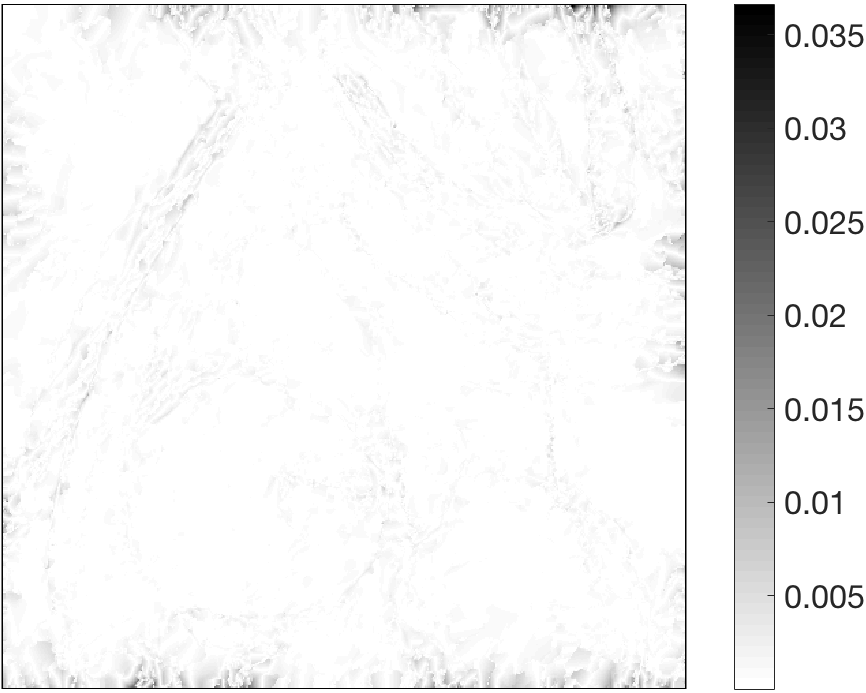}\\
& $\text{SSIM} = 0.8180$ & $\text{SSIM} = 0.8180$ & \\			
\rotatebox[origin=c]{90}{{\bf $250 \times 250$ pixels}}&
\includegraphics[height=4cm]{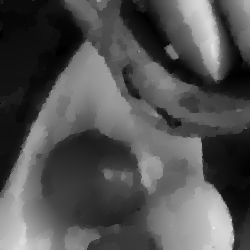}&
\includegraphics[height=4cm]{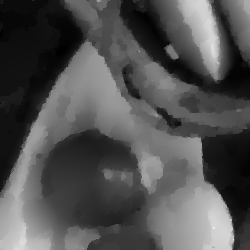}&
\includegraphics[height=4cm]{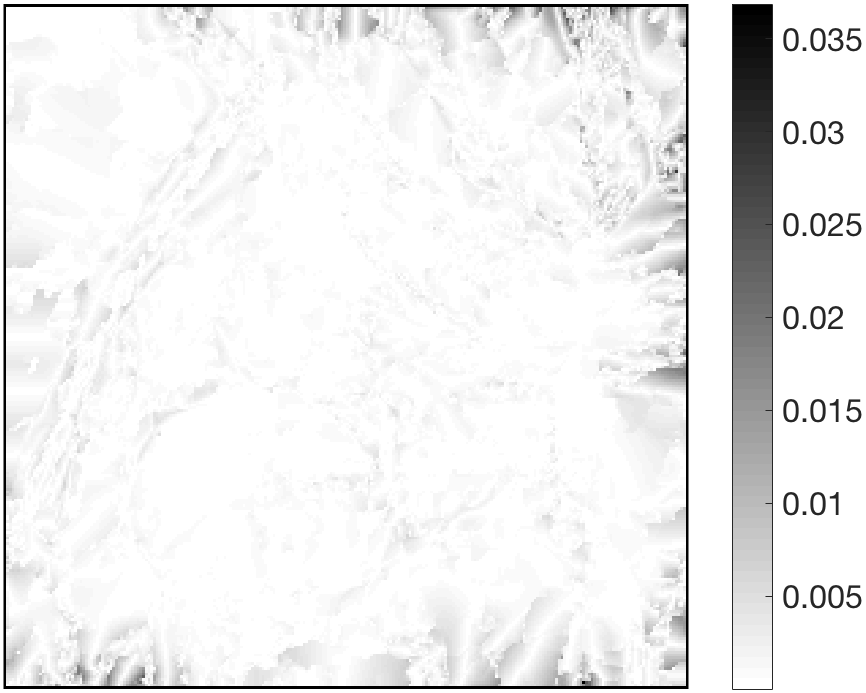}\\
& $\text{SSIM} = 0.7822$ & $\text{SSIM} = 0.7840$ & \\ 
\end{tabular}
\caption{Comparison of our proposed discretisation (reconstructions in the left column) with the one in \cite{TGV} (reconstructions in the middle column) and absolute difference of the two reconstructions (right column) for two different image sizes}
\label{fig:discretisation}
\end{figure}
The outcome of this comparison is shown in Figure \ref{fig:discretisation}.
Looking at the denoised images, we can conclude that both discretisation approaches provide very similar results, since visually there is hardly any difference between the corresponding images detectable. Thus, we included the difference images in the figure to illustrate that the reconstructions based on the two different discretisations are not identical, but indeed differ slightly especially close to some of the edges and near the boundary of the image domain. Also, with respect to the quality measure SSIM the results for both discretisations are in a similar range, however the differences seem to become more significant with decreasing image resolution. This makes sense since the proportion of pixels depicting an edge in relation to the overall number of pixels of the image increases with decreasing resolution and this is where most of the differences due to the different discretisation schemes occur. In light of the bottom row of Figure \ref{fig:discretisation} we can conclude that at a relatively low image resolution our proposed discretisation apparently performs slightly inferior to the one proposed by Bredies and coworkers, however we decided to nevertheless adhere to the proposed discretisation scheme since this way we can guarantee that the conservation laws valid in the continuous setting also apply for the discretised model.
\end{sloppypar}

At the end of this section we shall also briefly comment on alternative discretisation schemes in the context of our unified model \eqref{eq:VOS} that do not rely on finite differences. 
One option for such a discretisation would be based on staggered grids, i.e.\ on two grids, often referred to as primal and dual grid, that are shifted with respect to each other by half a pixel. 
Following for example \cite{staggeredGrids}, one could define a discrete gradient operator of a scalar function mapping from the cell centres of the primal grid to the vertical and horizontal faces (normal to the sides) of the primal grid, which can be identified with the vertical and horizontal edges (tangential to the sides) of the dual grid. 
In this setting one could then also define discrete versions of the natural vector field operators contained in our model: the curl would map from a vector field defined on the edges of the dual grid to a scalar function defined on the cell centres of the dual grid, which can be identified with the nodes of the primal grid. 
The same would apply to the second component of the shear. 
The divergence operator and the first component of the shear on the contrary would map from a vector field defined on the faces of the primal grid to a scalar function defined on the cell centres of the primal grid. 
However, now one had to face the question of how to add up the values of these different natural vector operators of a given vector field, since their codomains do not coincide. 
Of course, one may consider introducing averaging operators such that in the end one obtains values of the respective operators at the same locations \cite{staggeredGrids} or one might try to resolve this issue by defining inner products and norms in a suitable way (cf.\ e.g.\ \cite{staggeredGrids2,staggeredGrids3,staggeredGrids5}), however again it seems less obvious which is the best way to go. Another alternative would be Raviart-Thomas-based finite element methods \cite{RaviartThomasFEM}, where it would be quite straightforward to define the gradient and the divergence operator, however here, too, it would be less clear how to define the curl operator and the two components of the shear in the most natural way. 

Summing up, there seems to be no straightforward solution to the discretisation of our unified model \eqref{eq:VOS} that meets all our demands and we thus, despite the known demerits, decided to stick to the simple discretisation based on forward finite differences introduced earlier in this section.
An extensive investigation of the most natural discretisation in the context of higher-order TV methods and the Hessian taking into account the connection to the natural vector field operators and the related conservation laws is beyond the scope of this paper and left to future research.

\section{Results}
\label{sec:results}

In this section, we report on numerical denoising results obtained for two different greyscale test images: Trui ($257 \times 257$ pixels), cf.\ Figure \ref{fig:SVF_compression}, and the piecewise affine test image considered in Figures \ref{fig:sparseDiffOp} and \ref{fig:reconTest} ($256 \times 256$ pixels). We choose the denoising framework because of its straightforward  implementation and simple interpretability but would like to stress that our novel joint regulariser could be employed in any variational imaging model. First, we compare the best denoising result with respect to the structure similarity (SSIM) index obtained by using our unified model \eqref{eq:VOS} with denoising models using TV, ICTV and second-order TGV regularisers and the same standard $L^2$ data term. In addition, we present results of a large-scale parameter test solving our model \eqref{eq:VOS} and examining how various parameter combinations lead to reconstructions of different quality.

\begin{sloppypar}
In all experiments, we use the first-order primal-dual algorithm by Chambolle and Pock \cite{ChambollePock} for the convex optimisation. Moreover, we make use of both the step size adaptation and the stopping criterion presented in \cite{GoldsteinLiYuanEsserBaraniuk}. In order to solve our model \eqref{eq:VOS}, analogous to the  implementation described in detail in \cite{SVF}, we define
\begin{equation*}
x = 
\begin{pmatrix}
u, w
\end{pmatrix}^T,\quad
y = 
\begin{pmatrix}
y_1,
y_2
\end{pmatrix}^T,\quad
K = 
\begin{pmatrix}
\nabla & 0 & 0 & 0 & 0 \\ -I & \text{curl} & \text{div} & \text{sh}_1 & \text{sh}_2
\end{pmatrix}^T,
\end{equation*}
where the image $u$ and the vector field $w$ are defined as above, $y_1$ has the same size as $w$, $y_2$ is a vector with four components, each of which has the same size as $u$, and $I$ denotes the identity matrix. Using this notation we can now write our energy functional as a sum $G(x) + F(Kx)$ according to \cite{ChambollePock} by defining
\begin{equation*}
G(x) = \frac{1}{2} \Vert u - f \Vert_2^2,\quad
F(Kx) = \alpha R_{\bm{\beta}}(u),
\end{equation*}
and apply the modified primal-dual algorithm in \cite{GoldsteinLiYuanEsserBaraniuk}. For the implementation of the TV, ICTV and TGV models, we employ the corresponding standard primal-dual implementations, using the discretisation proposed in the respective papers if applicable.
\end{sloppypar}

\subsection{Comparison of Best VOS Result to State-Of-The-Art Methods}

In the following, we compare the best result of our \eqref{eq:VOS} model employing the discretisation described in Section \ref{sec:discretisation} with denoising results obtained by using TV, ICTV and second-order TGV regularisation. We measure optimality with respect to SSIM.

\begin{figure}[h]
\captionsetup[subfigure]{labelformat=empty}
\centering
\subfloat[Ground truth]{\includegraphics[width=0.25\textwidth]{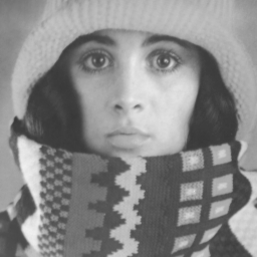}}\hfill
\subfloat[Noisy image]{\includegraphics[width=0.25\textwidth]{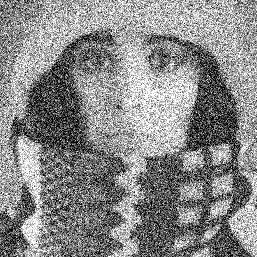}}\hfill
\subfloat[TV denoised]{\includegraphics[width=0.25\textwidth]{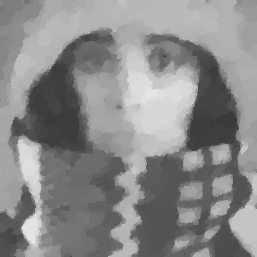}}\\
\subfloat[ICTV denoised]{\includegraphics[width=0.25\textwidth]{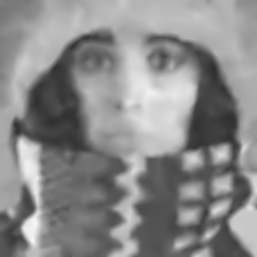}}\hfill
\subfloat[TGV denoised]{\includegraphics[width=0.25\textwidth]{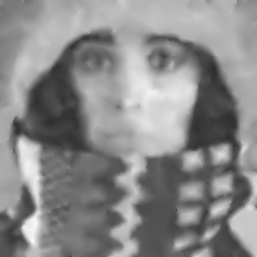}}\hfill
\subfloat[Ours denoised]{\includegraphics[width=0.25\textwidth]{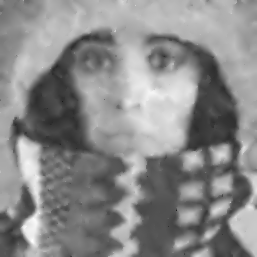}}
\caption{Best results with respect to SSIM for Trui test image}
\label{fig:resultsComparisonTrui05}
\end{figure}

In Figure \ref{fig:resultsComparisonTrui05}, we demonstrate that by using our unified model \eqref{eq:VOS} we are able to obtain a reconstruction of the noisy Trui image superior to TV and comparable to ICTV and second-order TGV with respect to the quality measure SSIM. The task is to reconstruct the image on the top left, which has been corrupted by additive Gaussian noise with zero mean and variance $\sigma^2 = 0.05$ (top centre). We would like to stress that this noise level is relatively high compared with most publications on denoising but we chose it in order to better highlight the visual differences in the reconstructions. In the TV-regularised reconstruction (top right), we choose $\alpha = \frac{1}{4}$ and obtain an SSIM value of 0.7995. In the ICTV case (bottom left), we select $\alpha_1 = \frac{1}{2}$ and $\alpha_0 = \frac{1}{4}$, where SSIM = 0.8121. For the second-order TGV-type reconstruction, we set $\alpha_1 = \alpha_0 = \frac{1}{4}$. Here, we obtain an SSIM value of 0.8141. For better comparison with the ICTV result and the result of our unified model we mention that the corresponding TGV-result with our discretisation on this image resolution yields an SSIM of 0.8131. The best result using our model is shown on the bottom right, choosing $\alpha = \frac{1}{4.5}, \beta_1 = 0, \beta_2 = \frac{1}{8}, \beta_3 = 1$ and $\beta_4 = \frac{1}{2}$ and achieving an SSIM value of 0.8136. We would like to especially draw attention to the enhanced reconstruction of the chessboard-like pattern in the scarf as well as the regions around the eyes and the mouth by using our model.

\begin{figure}[h]
\captionsetup[subfigure]{labelformat=empty}
\centering
\subfloat[Ground truth]{\includegraphics[width=0.25\textwidth]{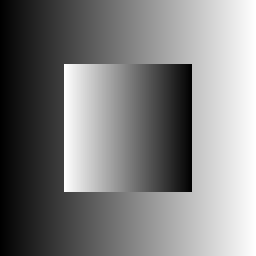}}\hfill
\subfloat[Noisy image]{\includegraphics[width=0.25\textwidth]{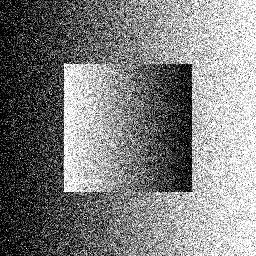}}\hfill
\subfloat[TV denoised]{\includegraphics[width=0.25\textwidth]{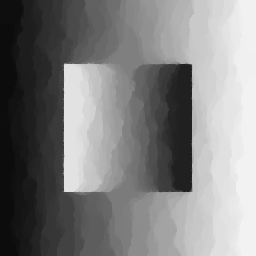}}\\
\subfloat[ICTV denoised]{\includegraphics[width=0.25\textwidth]{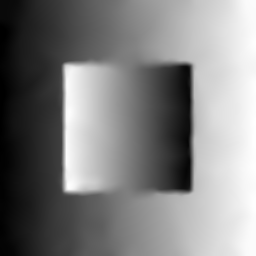}}\hfill
\subfloat[TGV denoised]{\includegraphics[width=0.25\textwidth]{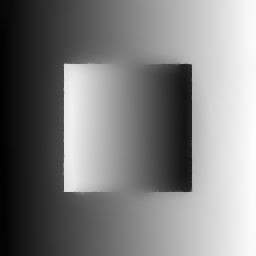}}\hfill
\subfloat[Ours denoised]{\includegraphics[width=0.25\textwidth]{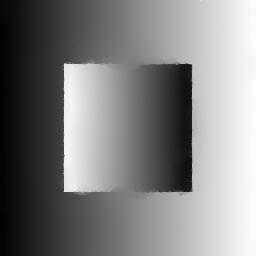}}
\caption{Best results with respect to SSIM for piecewise affine test image}
\label{fig:resultsComparisonTest05}
\end{figure}

Now we present similar results obtained by solving the denoising problem for the piecewise affine square test image in Figure \ref{fig:resultsComparisonTest05}, again considering a noise variance of $\sigma^2 = 0.05$. In the case of TV denoising (top right), we choose $\alpha = \frac{1}{2}$, yielding SSIM = 0.9153. On the bottom left, ICTV regularisation selecting $\alpha_1 = 1$ and $\alpha_0 = \frac{1}{2}$ leads to an SSIM value of 0.9509. The parameters for the second-order TGV reconstruction (bottom centre) are $\alpha_1 = \frac{1}{2}$ and $\alpha_0 = 2$. Here, we obtain an SSIM value of 0.9775. The best result using our model is obtained by setting $\alpha = \frac{1}{3}$, $\beta_1 = 4.5$, $\beta_2 = 90$ and $\beta_3 = \beta_4 = 9$. We achieve the best SSIM index of 0.9844.

\subsection{Practical Study of Parameter Combinations}

\begin{sloppypar}
In order to get a better understanding of our novel regulariser and how zero and non-zero values of the different parameters in our model affect the denoising reconstructions, we set up large-scale parameter tests for both the Trui and the piecewise affine test image. We use the discretisation described in Section \ref{sec:discretisation} for all experiments, solving \eqref{eq:VOS} numerically as described at the beginning of this section. For the Trui image we select $\alpha \in \{ \frac{1}{5}, \frac{1}{4.5}, \frac{1}{4} \}$ and $\beta_i \in \{ 0, \frac{1}{8}, \frac{1}{4}, \frac{1}{2}, 1, 2, 5, 20 \}$, $i = 1,\dots,4$, which leads to 12288 different combinations, and for the piecewise affine test image we choose $\alpha \in \{ \frac{1}{4.5}, \frac{1}{4}, \frac{1}{3.5}, \frac{1}{3} \}$ and $\beta_i = \frac{b}{\alpha^2}$, $b \in \{ 0, \frac{1}{8}, \frac{1}{4}, \frac{1}{2}, 1, 10 \}$, $i = 1,\dots,4$, which leads to 5184 different combinations. We use different parameter sets, as our images differ quite significantly in structure and we naturally need a stronger overall regularisation for the less textured and more homogeneous piecewise affine test image. Again, we consider the denoising problem explained above and corrupt the original image by additive Gaussian noise with zero mean and variance $\sigma^2 = 0.05$.
\end{sloppypar}

\subsubsection*{Trui Test Image}

\begin{figure}[h]
\captionsetup[subfigure]{labelformat=empty}
\centering
\subfloat[SSIM]{\includegraphics[width=0.32\textwidth]{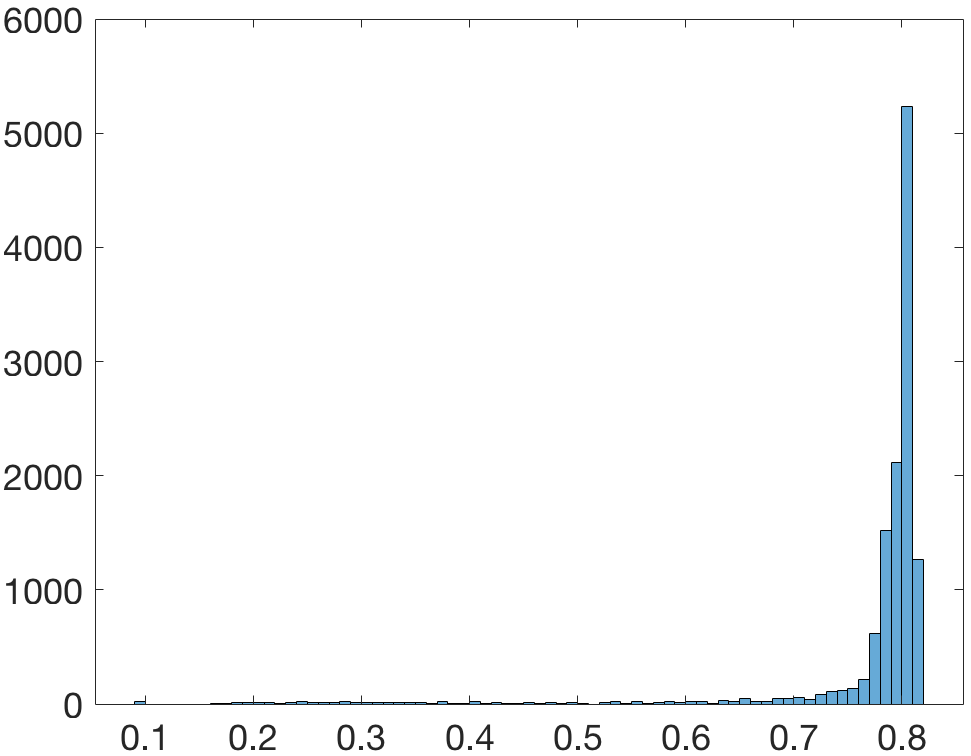}}\hfill
\subfloat[PSNR]{\includegraphics[width=0.32\textwidth]{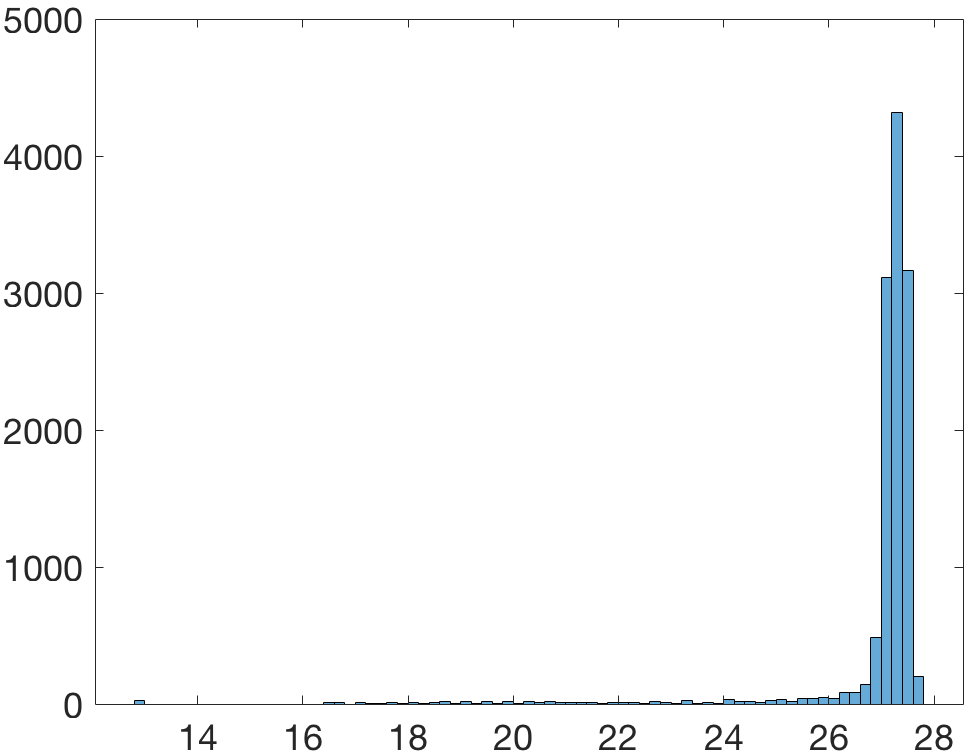}}\hfill
\subfloat[Relative Error]{\includegraphics[width=0.32\textwidth]{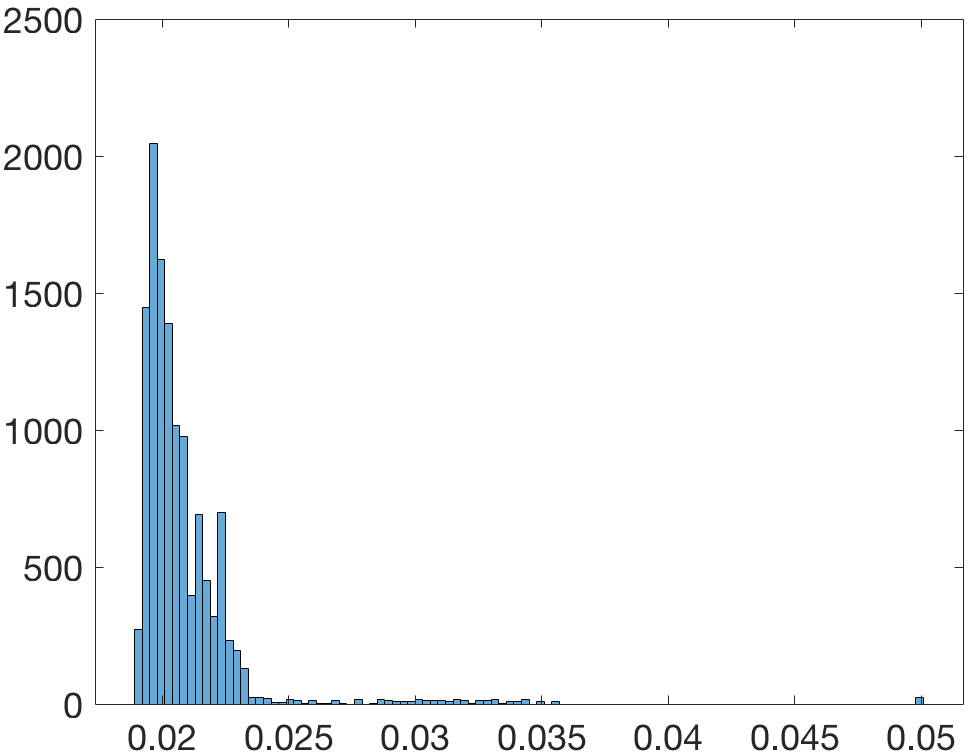}}
\caption{Histograms for Trui considering all tested parameter combinations}
\label{fig:histogramsTrui}
\end{figure}

Figure \ref{fig:histogramsTrui} shows histograms for three quality measures we calculated for all obtained reconstructions of Trui in our parameter test: SSIM, PSNR and relative error. It can be immediately observed that in the majority of cases, we get competitive values.

\begin{figure}[h]
\captionsetup[subfigure]{labelformat=empty}
\centering
\subfloat[SSIM]{\includegraphics[width=0.32\textwidth]{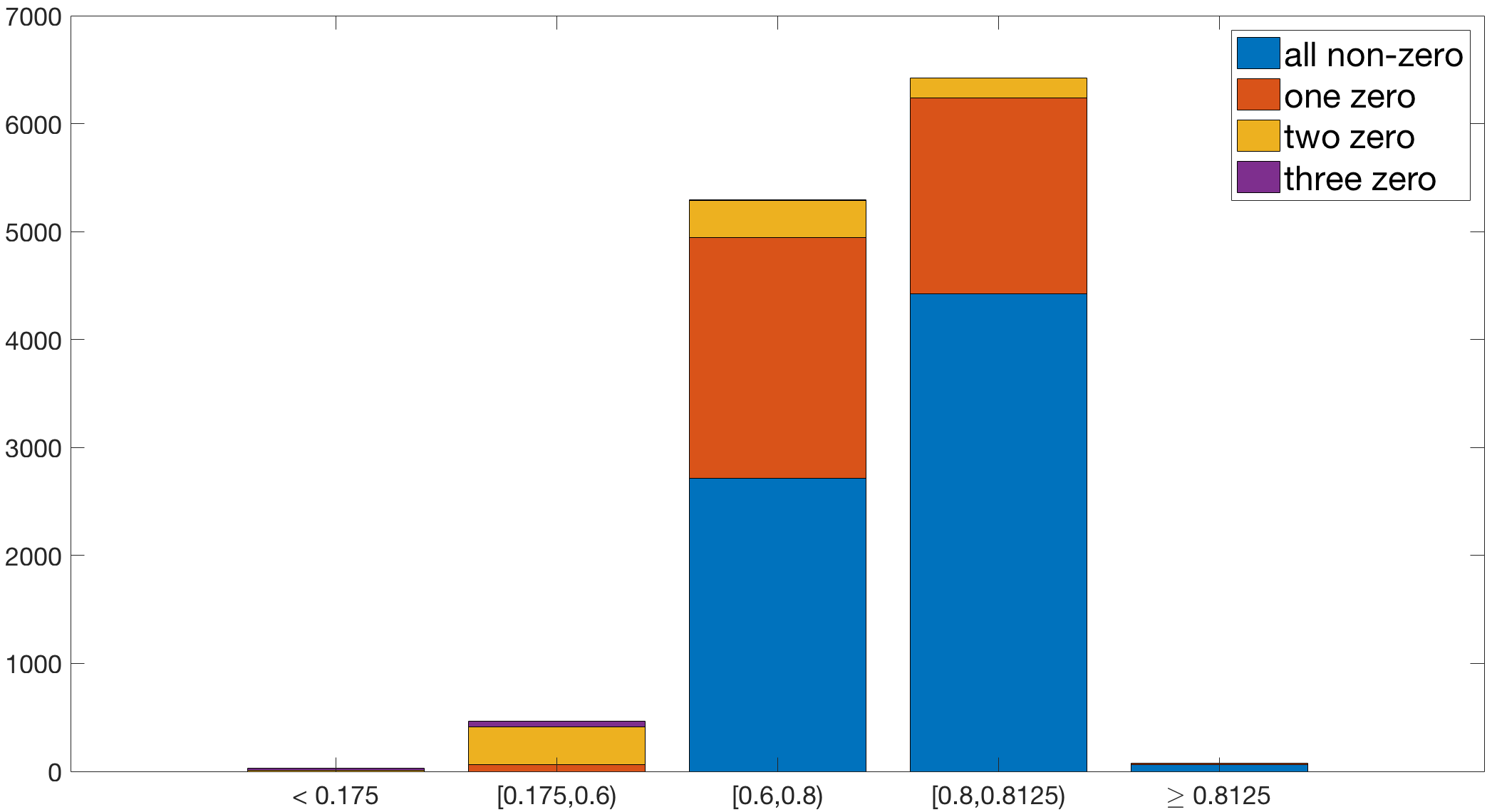}}\hfill
\subfloat[PSNR]{\includegraphics[width=0.32\textwidth]{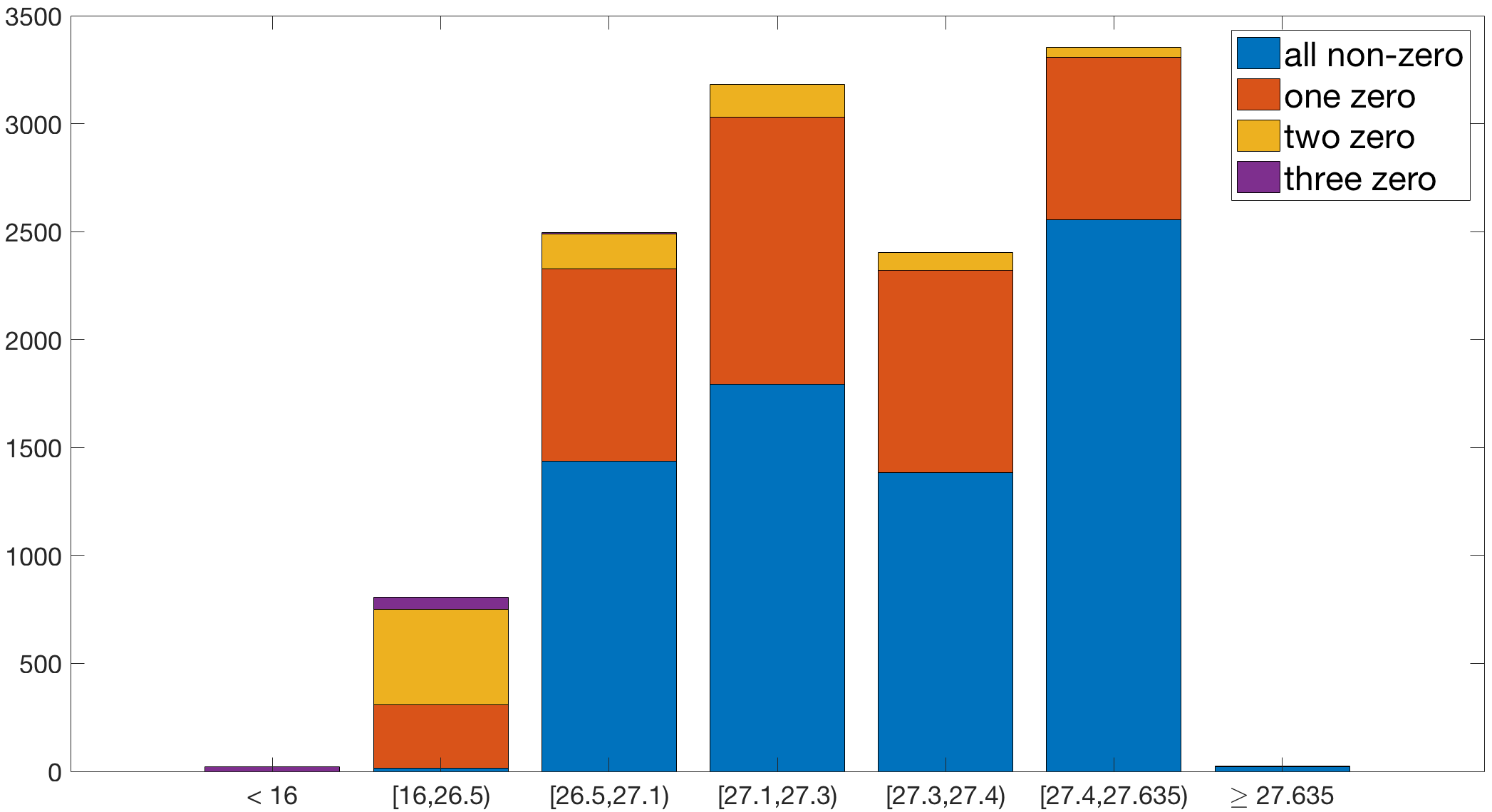}}\hfill
\subfloat[Relative Error]{\includegraphics[width=0.32\textwidth]{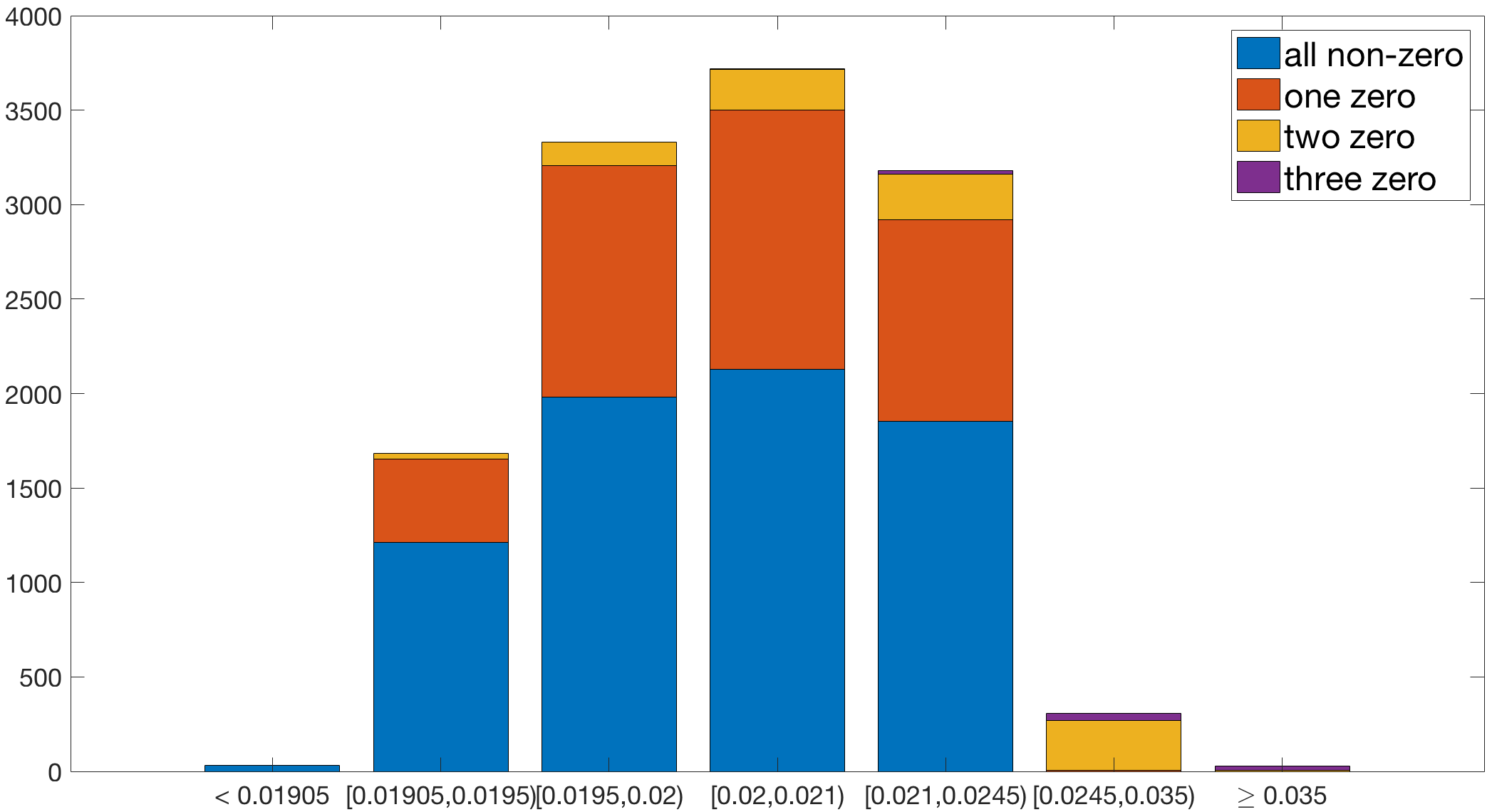}}
\caption{Histograms for Trui considering all tested parameter combinations, sub-divided into four cases: 1) all $\beta_i$ are non-zero (blue), 2) one $\beta_i$ is equal to zero (orange), 3) two $\beta_i$ are equal to zero (yellow), 4) three $\beta_i$ are equal to zero (purple). Note that the bars do not have equal width.}
\label{fig:barplotsZerosTrui}
\end{figure}

In Figure \ref{fig:barplotsZerosTrui}, we examine the occurrences of various quality measure values for different parameter combinations in more depth. More specifically, we sub-divide the results into four classes, dependent on how many $\beta_i$ are non-zero. From this analysis, we can already conclude that scenarios where only one $\beta_i$ is positive and hence only a single differential operator acts on the vector field $w$ in the joint vector operator sparsity regularisation term yield the worst results with respect to our selected measures. Setting two of the $\beta_i$ to zero seems to be the second-worst case. On the other hand, having all $\beta_i$ activated yields the best performing results, which confirms the usefulness and added value of our model and justifies the comparably large number of parameters.

Note at this point that for the multi-colour histograms throughout this section, we manually selected the very differently sized intervals for the bars and heavily customised them such that the different classes become well-separated. Consequently, if a bar still contains a variety of colours, they could not be separated further in a reasonable or meaningful manner.

\begin{figure}[h]
\captionsetup[subfigure]{labelformat=empty}
\centering
\subfloat[SSIM]{\includegraphics[width=0.32\textwidth]{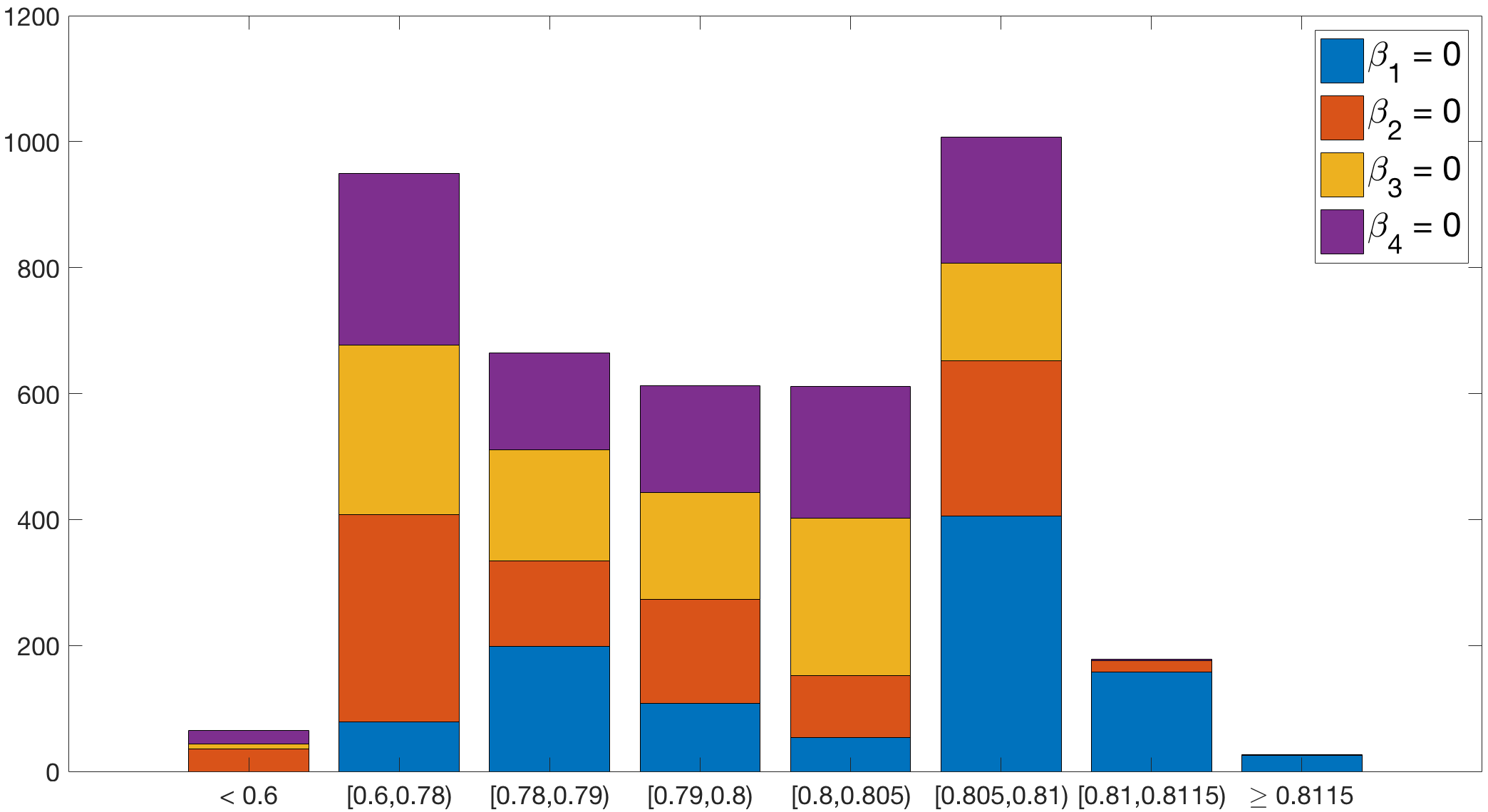}}\hfill
\subfloat[PSNR]{\includegraphics[width=0.32\textwidth]{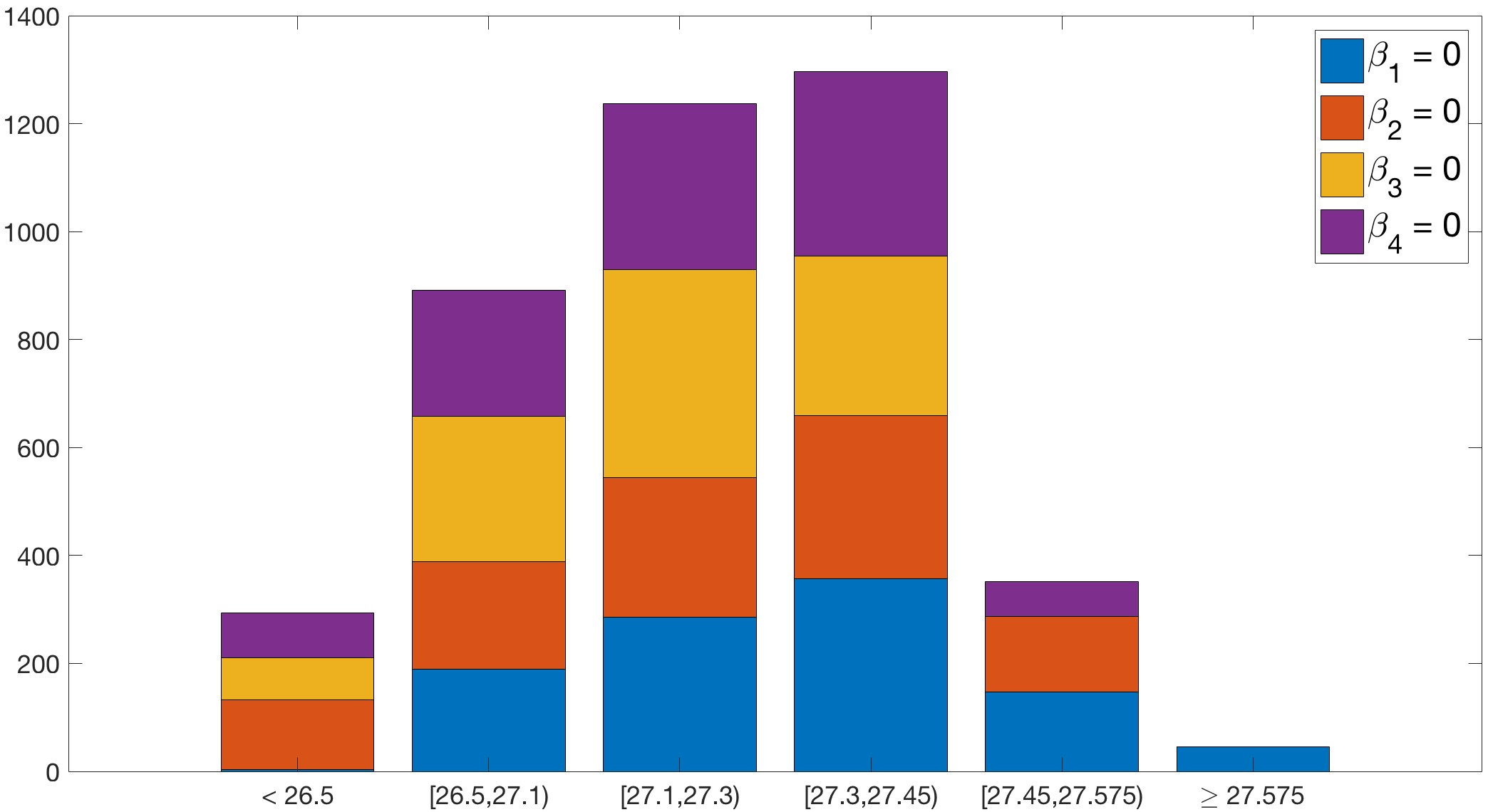}}\hfill
\subfloat[Relative Error]{\includegraphics[width=0.32\textwidth]{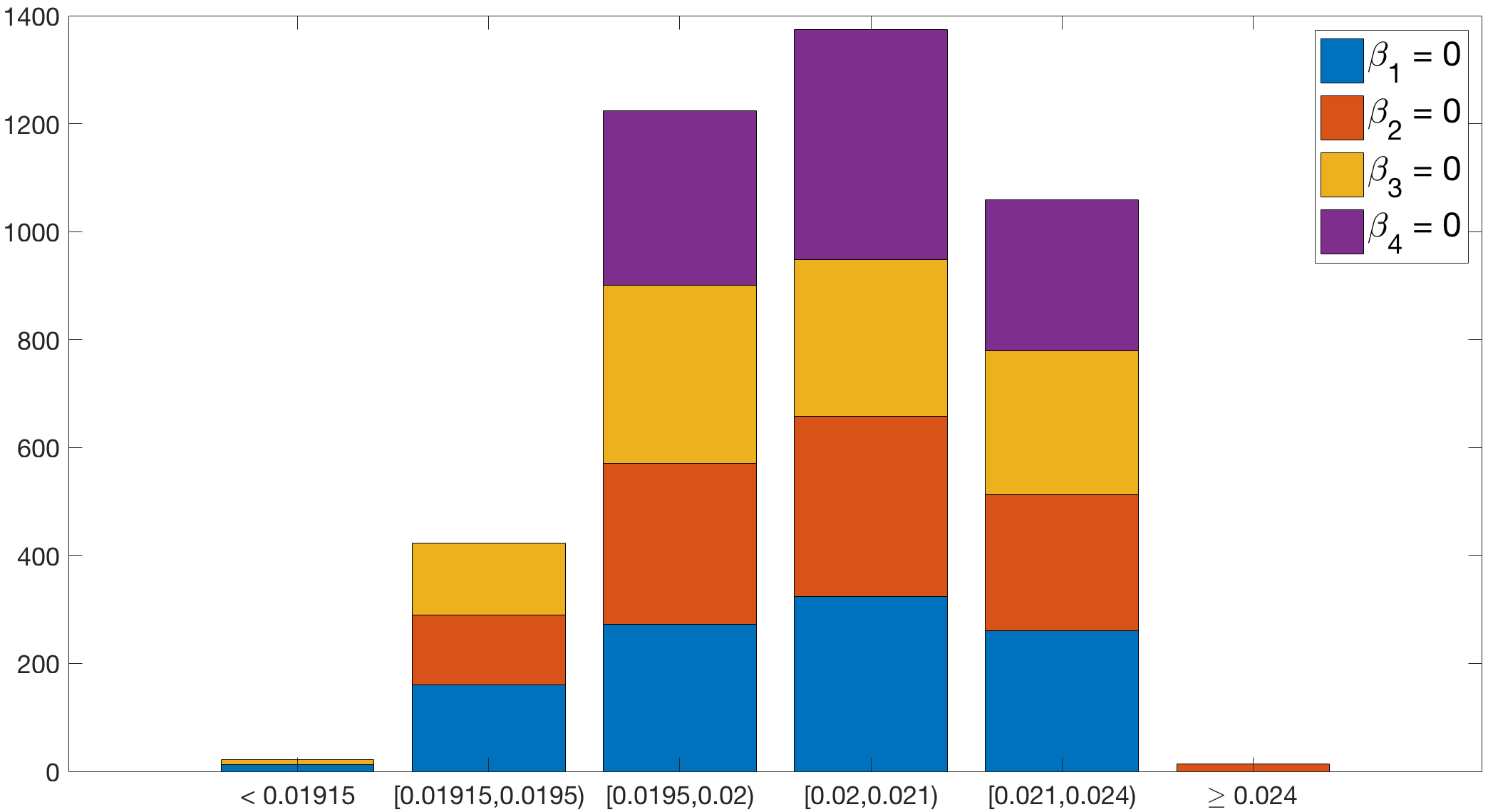}}
\caption{Histograms for Trui in the scenario that one $\beta_i$ is equal to zero: 1) $\beta_1 = 0$ (blue), 2) $\beta_2 = 0$ (orange), 3) $\beta_3 = 0$ (yellow), 4) $\beta_4 = 0$ (purple). Note that the bars do not have equal width.}
\label{fig:barplotsOneZeroTrui}
\end{figure}

In Figure \ref{fig:barplotsOneZeroTrui} we only consider a subset of our results and look at the case where one of the $\beta_i$ is set to zero, i.e.\ where three differential operators are active in our joint regulariser. Also in this scenario we recognise a certain trend. Considering the curl in the regularisation does not seem to be essential, since the best results are achieved in the case when it is set to zero. In contrast, the divergence appears to be of more crucial importance, as setting it to zero produces worse results in general. Of course, this is however strongly dependent on the combination of all five parameters including the overall regularisation weight $\alpha$, and in some cases zero divergence even yields very good results, especially with respect to the SSIM.

\begin{figure}[h]
\captionsetup[subfigure]{labelformat=empty}
\centering
\subfloat[SSIM]{\includegraphics[width=0.32\textwidth]{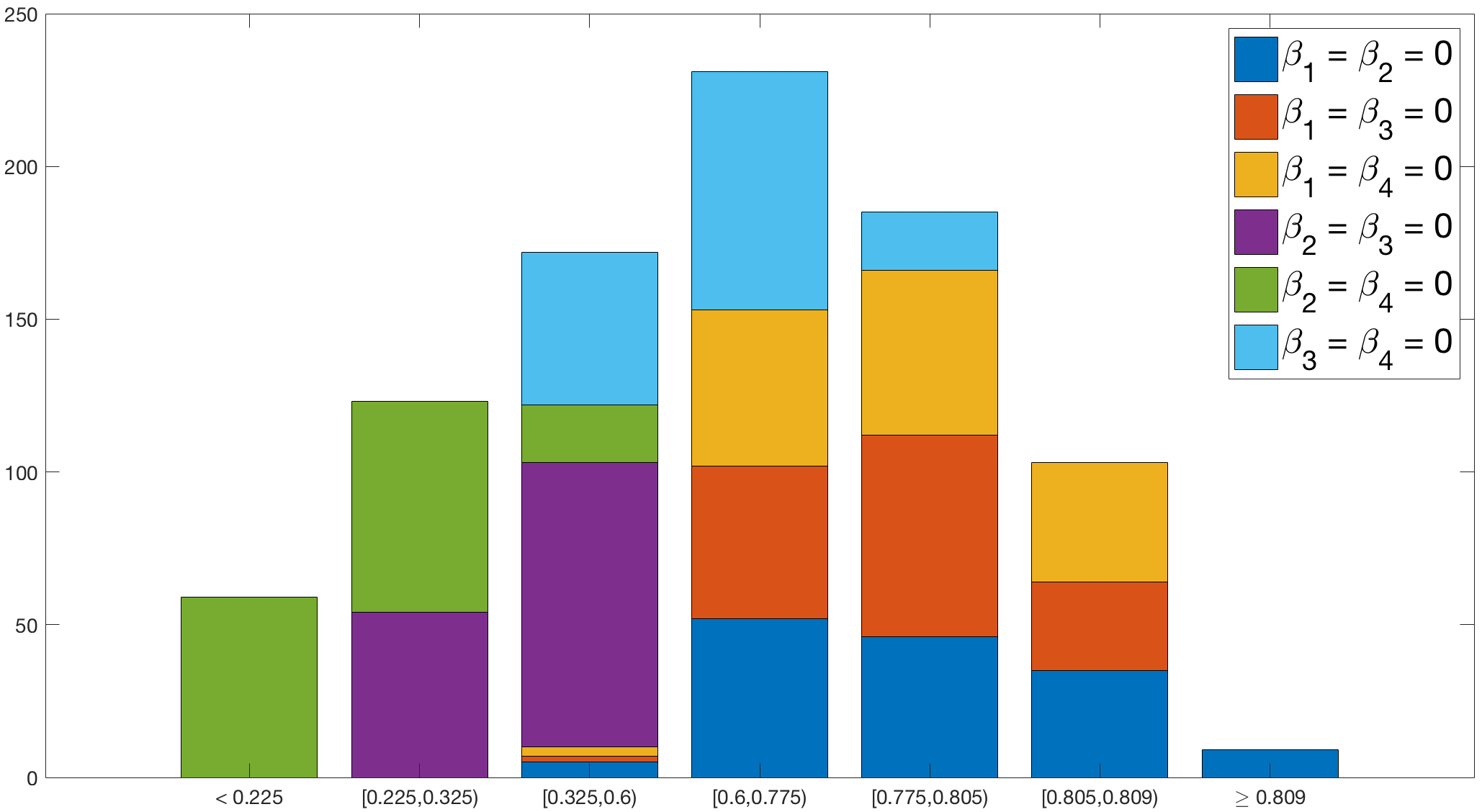}}\hfill
\subfloat[PSNR]{\includegraphics[width=0.32\textwidth]{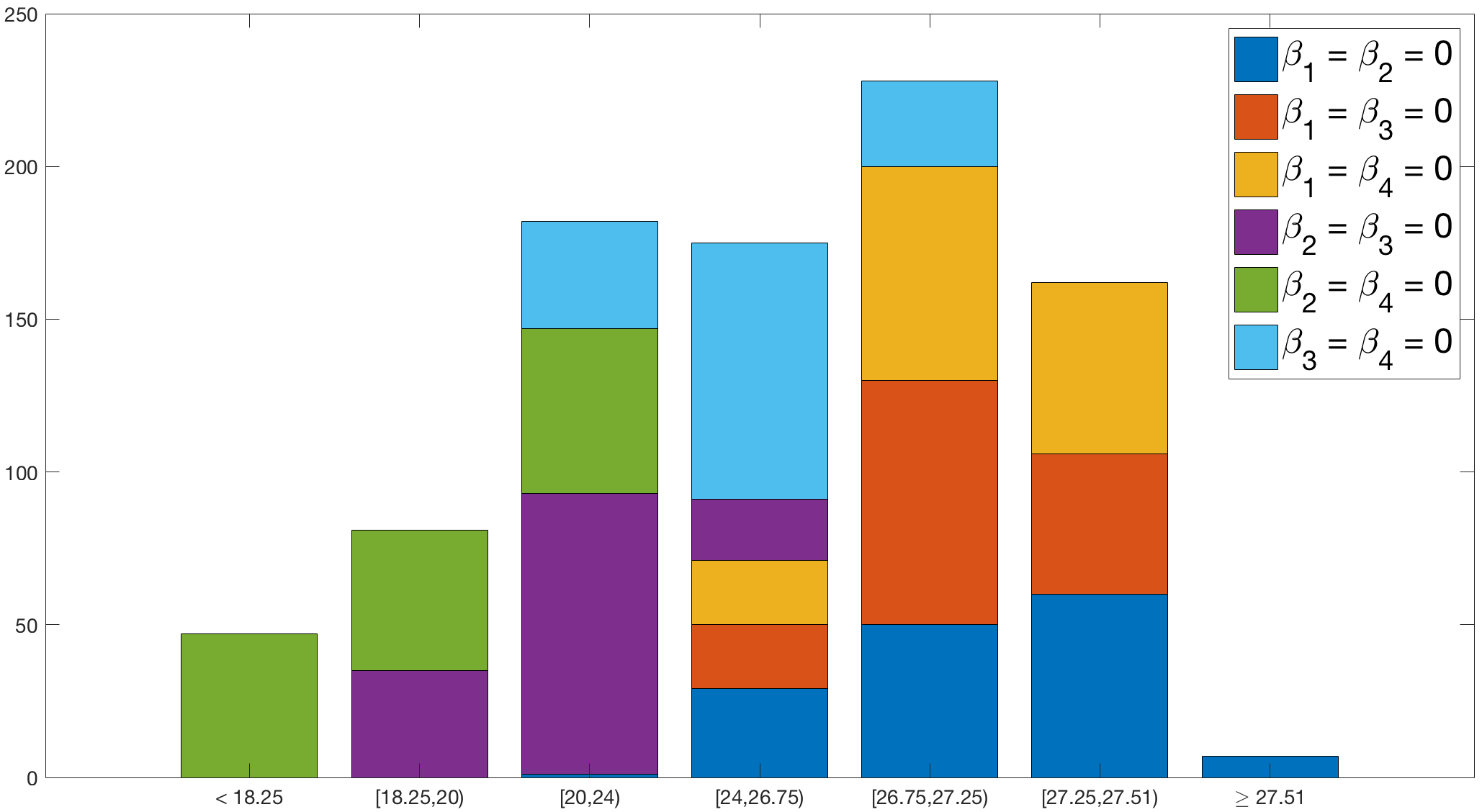}}\hfill
\subfloat[Relative Error]{\includegraphics[width=0.32\textwidth]{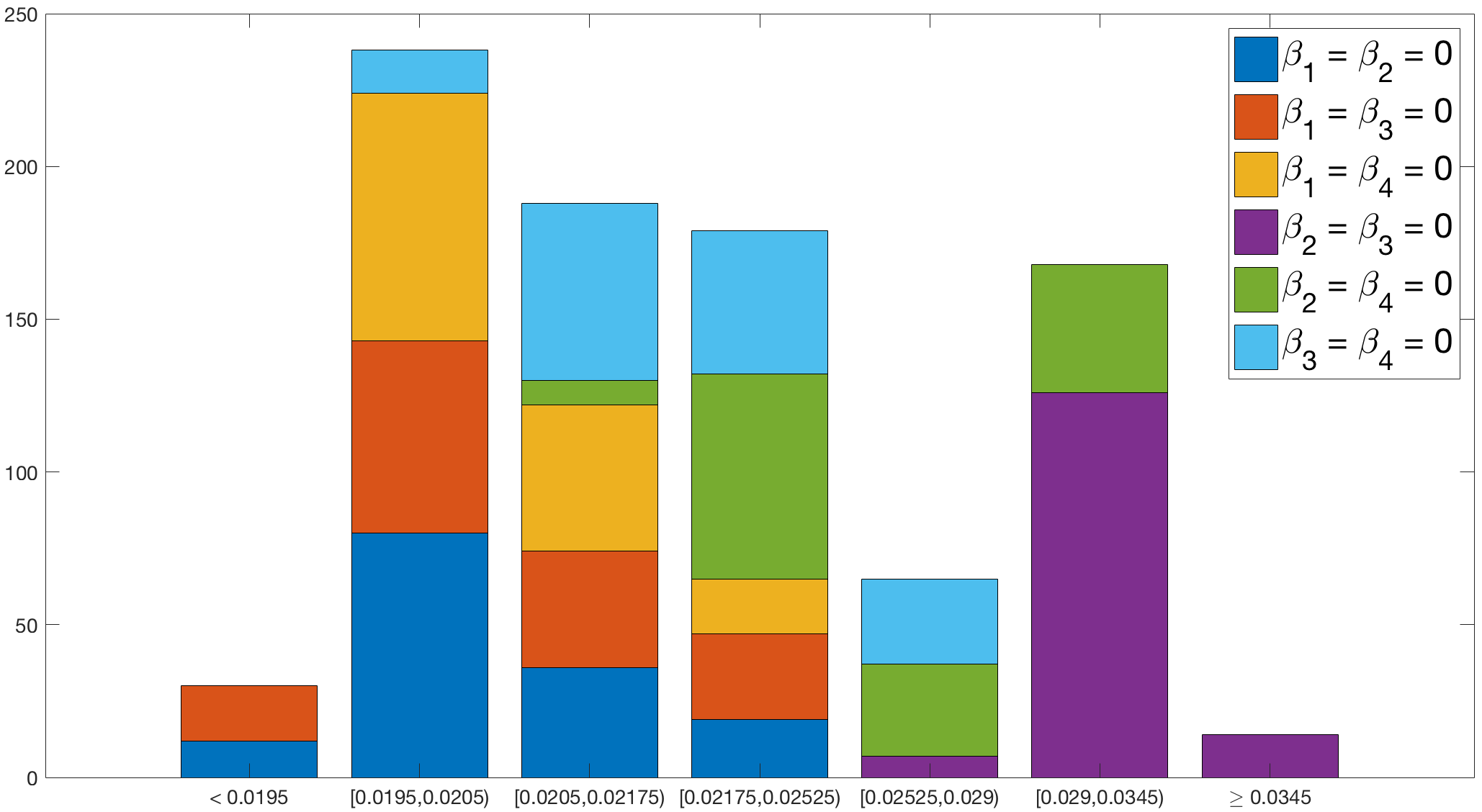}}
\caption{Histograms for Trui in the scenario that two $\beta_i$ are equal to zero: 1) $\beta_1 = \beta_2 = 0$ (blue), 2) $\beta_1 = \beta_3 = 0$ (orange), 3) $\beta_1 = \beta_4 = 0$ (yellow), 4) $\beta_2 = \beta_3 = 0$ (purple), 5) $\beta_2 = \beta_4 = 0$ (green), 6) $\beta_3 = \beta_4 = 0$ (cyan). Note that the bars do not have equal width.}
\label{fig:barplotsTwoZeroTrui}
\end{figure}

The histograms shown in Figure \ref{fig:barplotsTwoZeroTrui} correspond to the case where two of the $\beta_i$ are positive and the other two are set to zero. This yields six different combinations to consider. Interestingly, we again recognise some general trends throughout our data set. In a relatively consistent manner, setting both $\beta_2$, i.e.\ the divergence term, and $\beta_3$ or $\beta_4$, i.e.\ one component of the shear, to zero seems to be a bad idea, as this produces the worst results. This exactly coincides with our observations in Section \ref{sec:unifiedmodel} and more specifically in Figure \ref{fig:reconTest}, where the sparse curl/sh$_1$ and sparse curl/sh$_2$ reconstruction of the piecewise affine square test image contains diagonal and straight line artefacts, respectively. The third worst performing scenario in general is the combination of sparse curl and divergence. Setting $\beta_1$ and either component of the shear to zero results in the second-best reconstructions. In our test we obtain the best performance by only enforcing sparsity in the shear.

\subsubsection*{Piecewise Affine Test Image}

\begin{figure}[h]
\captionsetup[subfigure]{labelformat=empty}
\centering
\subfloat[SSIM]{\includegraphics[width=0.32\textwidth]{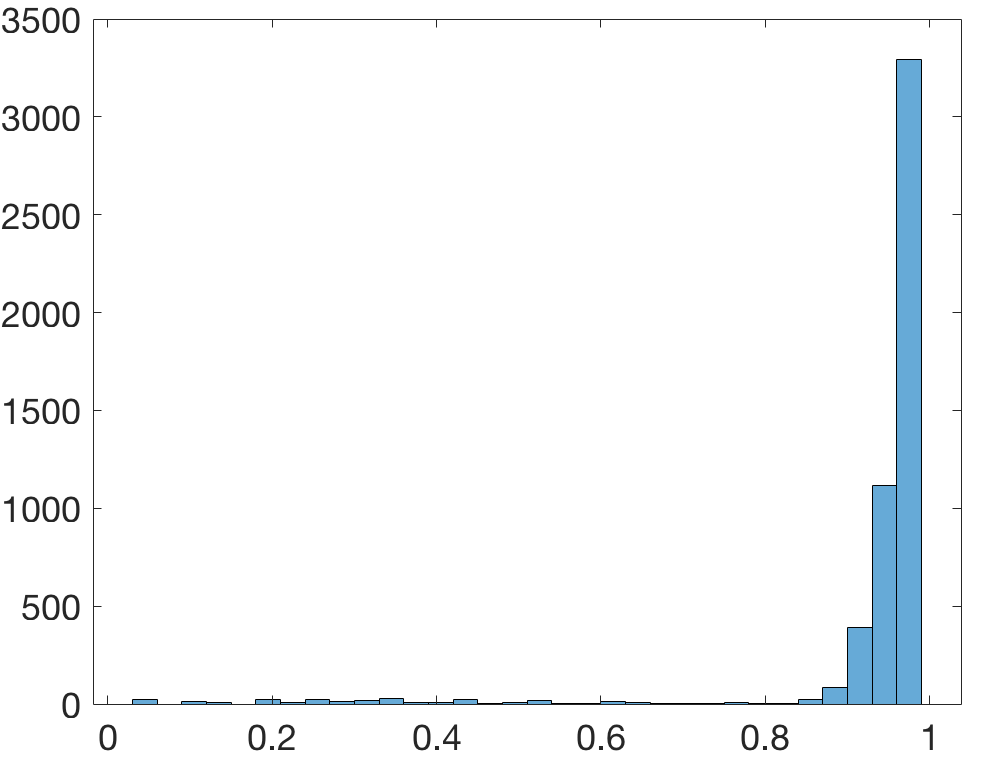}}\hfill
\subfloat[PSNR]{\includegraphics[width=0.32\textwidth]{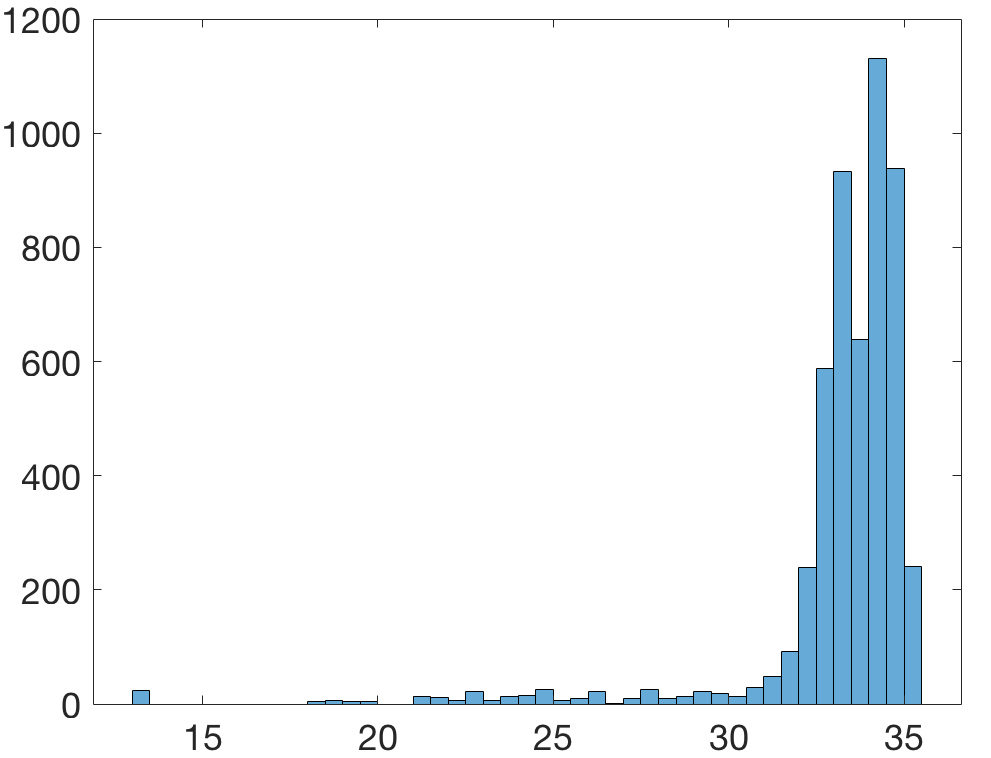}}\hfill
\subfloat[Relative Error]{\includegraphics[width=0.32\textwidth]{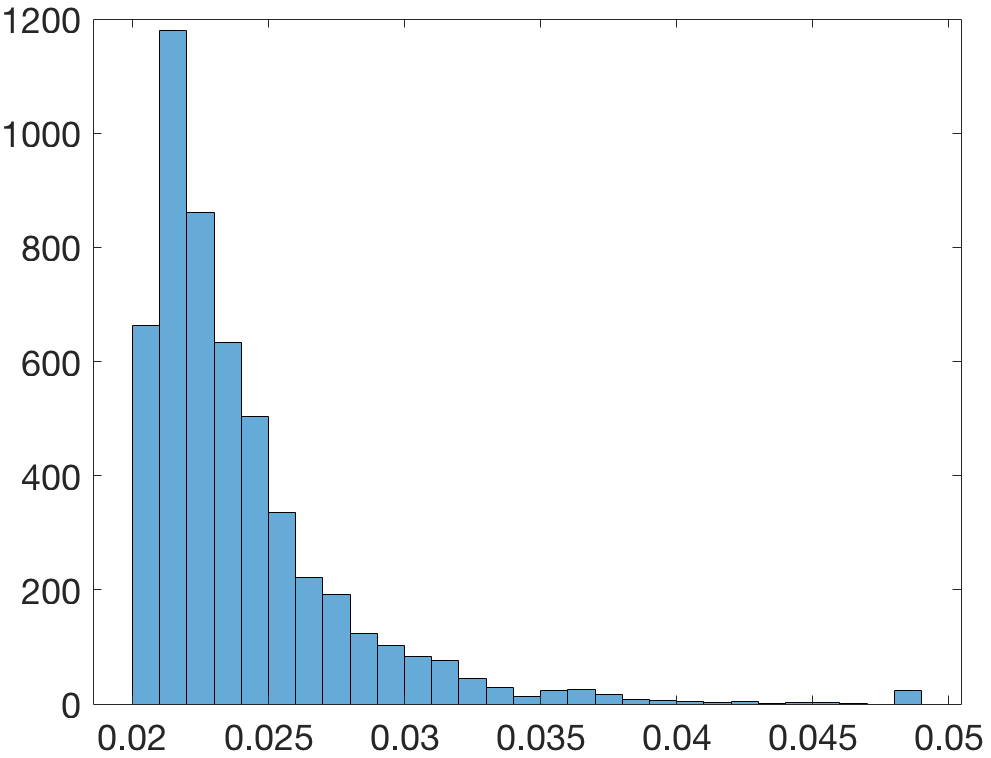}}
\caption{Histograms for piecewise affine image for all tested parameter combinations}
\label{fig:histogramsSquare}
\end{figure}

For the piecewise affine image in Figure \ref{fig:resultsComparisonTest05}, we generally obtain similar results. In Figure \ref{fig:histogramsSquare}, we can see that again, the histograms for the SSIM, PSNR and relative error are concentrated around desirable values, even better ones than for the Trui image. This is probably due to the simpler structure of the piecewise affine test image.

\begin{figure}[h]
\captionsetup[subfigure]{labelformat=empty}
\centering
\subfloat[SSIM]{\includegraphics[width=0.32\textwidth]{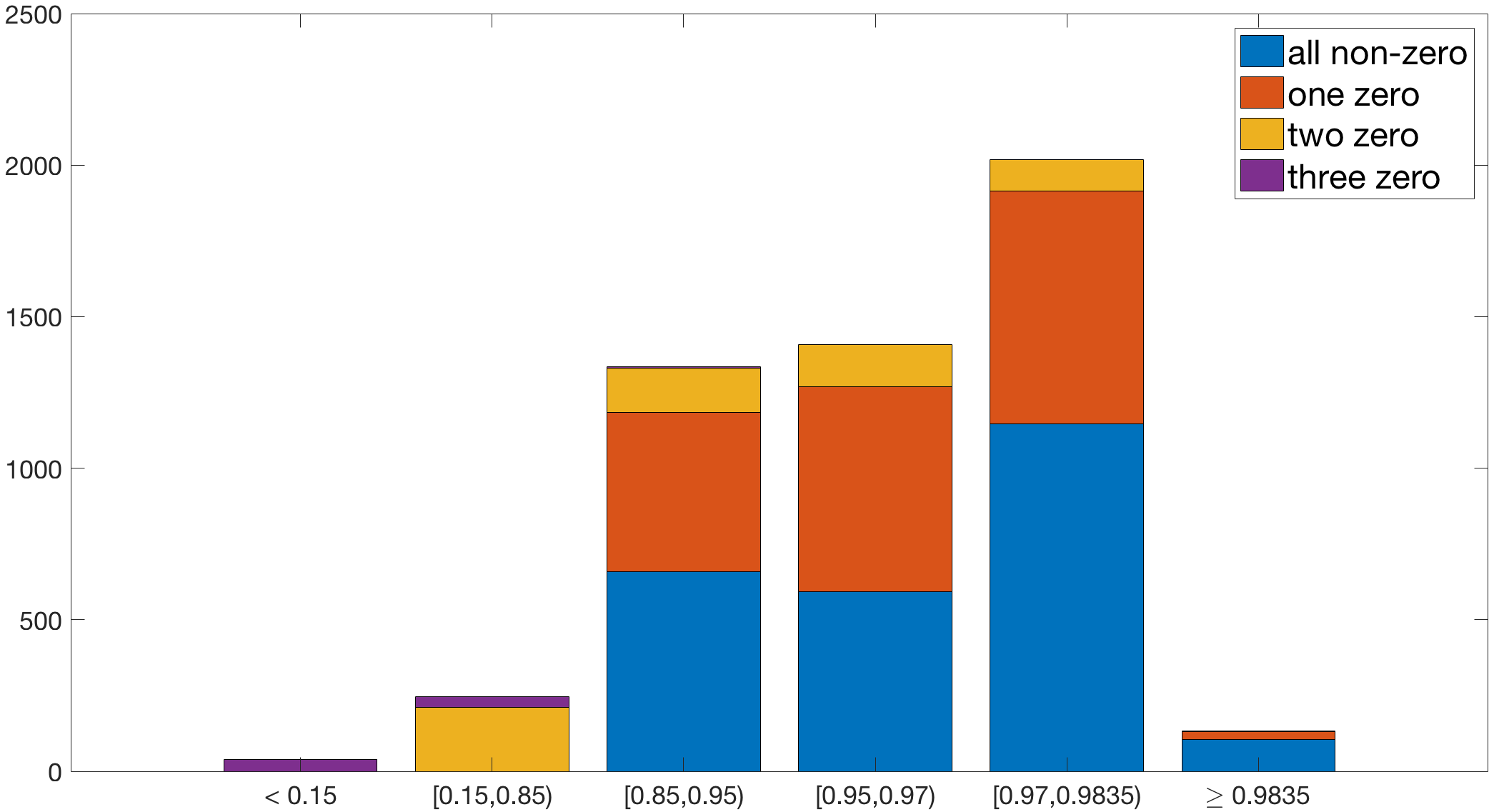}}\hfill
\subfloat[PSNR]{\includegraphics[width=0.32\textwidth]{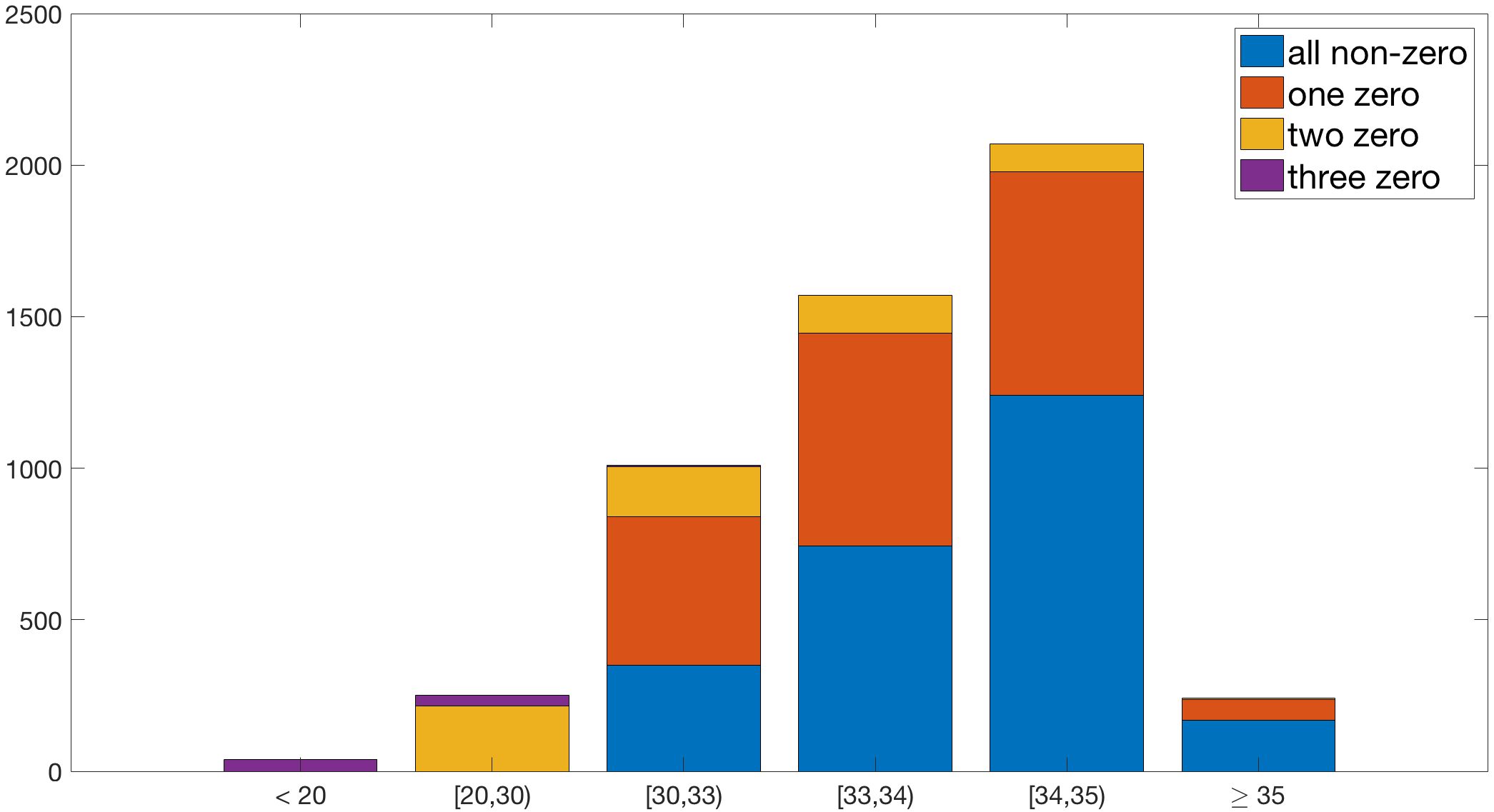}}\hfill
\subfloat[Relative Error]{\includegraphics[width=0.32\textwidth]{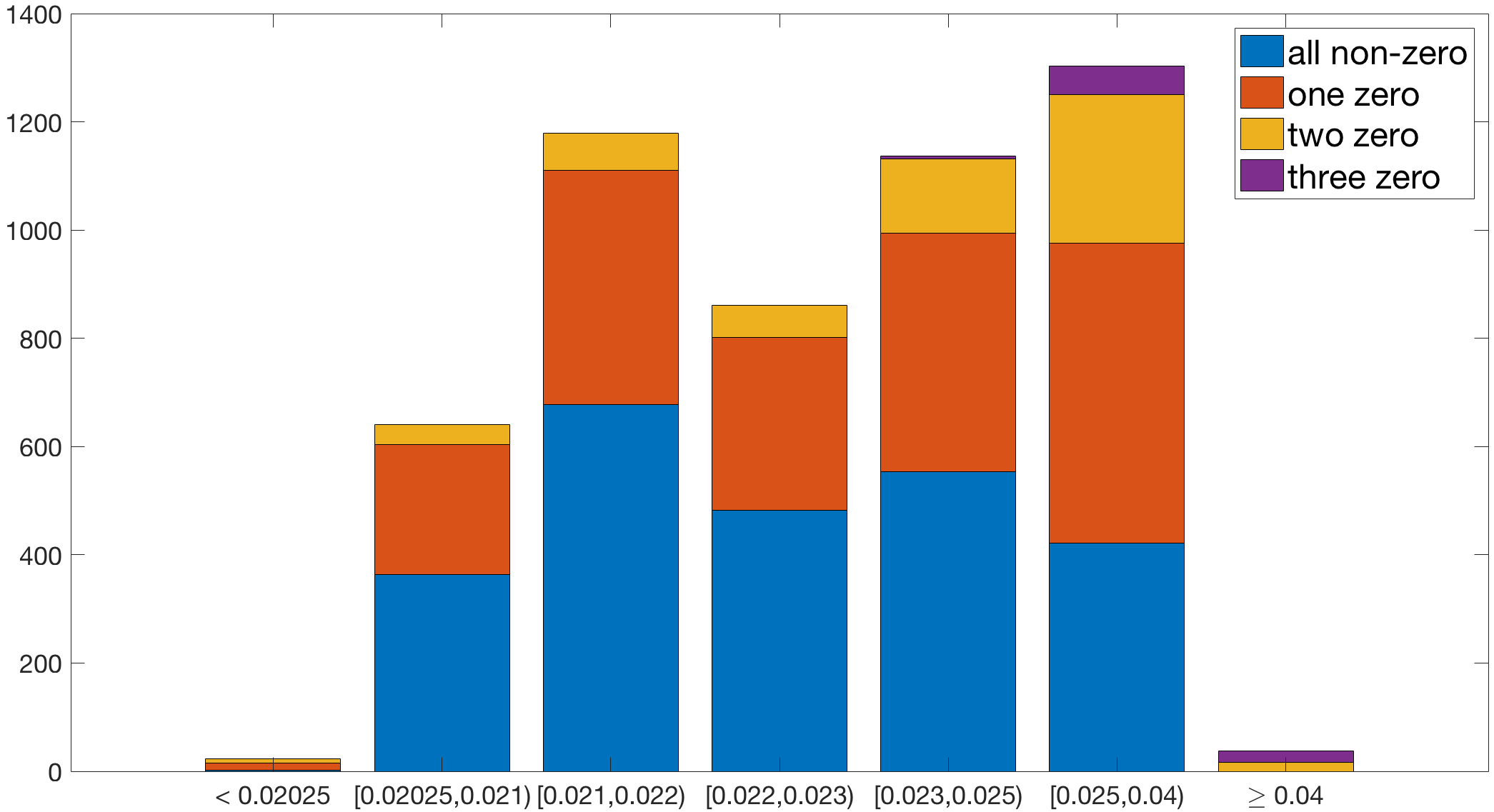}}
\caption{Histograms for piecewise affine image considering all tested parameter combinations, sub-divided into four cases: 1) all $\beta_i$ are non-zero (blue), 2) one $\beta_i$ is equal to zero (orange), 3) two $\beta_i$ are equal to zero (yellow), 4) three $\beta_i$ are equal to zero (purple). Note that the bars do not have equal width.}
\label{fig:barplotsZerosSquare}
\end{figure}

Figure \ref{fig:barplotsZerosSquare} confirms that the more $\beta_i$ are non-zero, the better the denoising reconstructions are in general. The worst and second-worst results are obtained when three or two $\beta_i$ are set to zero, respectively.

\begin{figure}[h]
\captionsetup[subfigure]{labelformat=empty}
\centering
\subfloat[SSIM]{\includegraphics[width=0.32\textwidth]{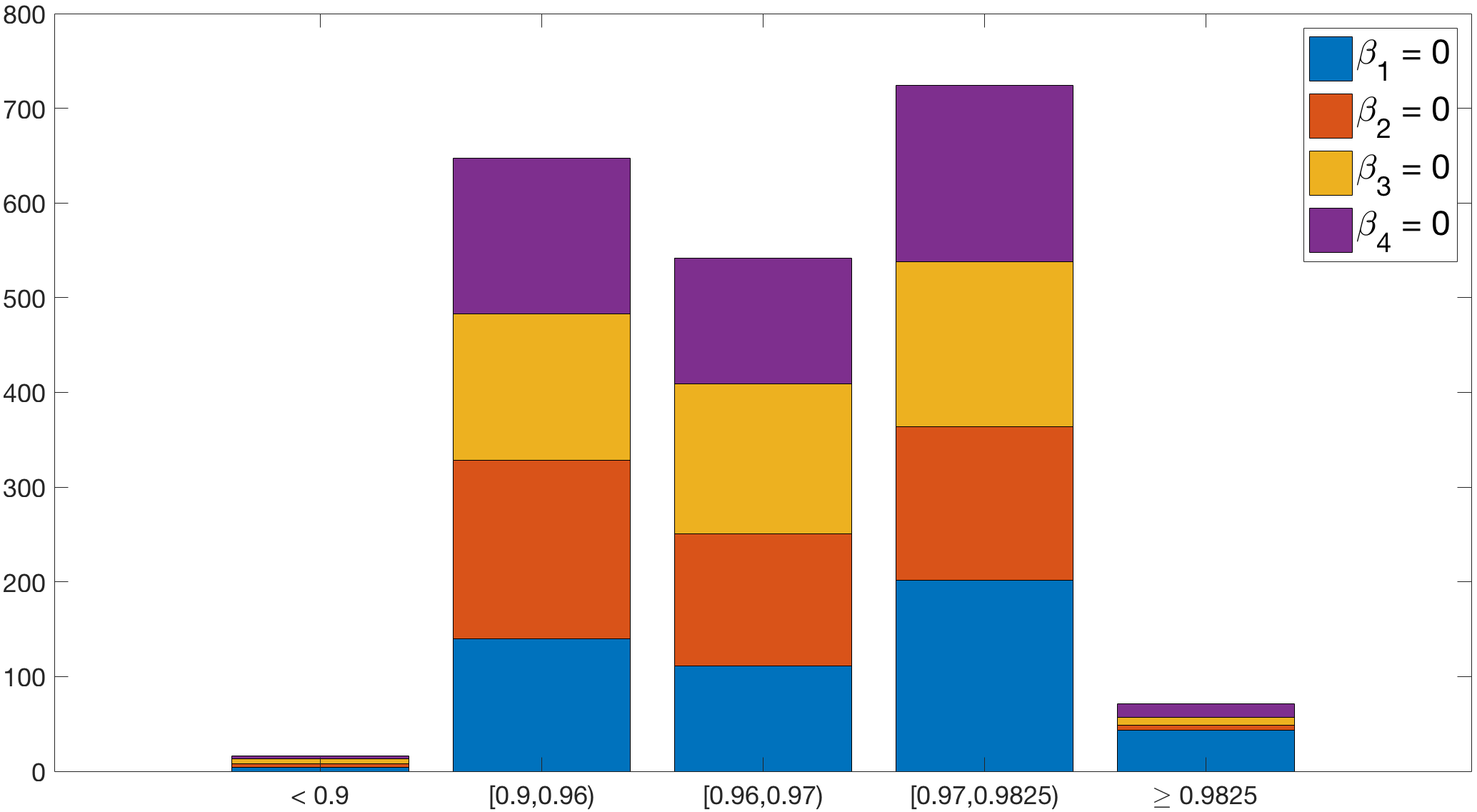}}\hfill
\subfloat[PSNR]{\includegraphics[width=0.32\textwidth]{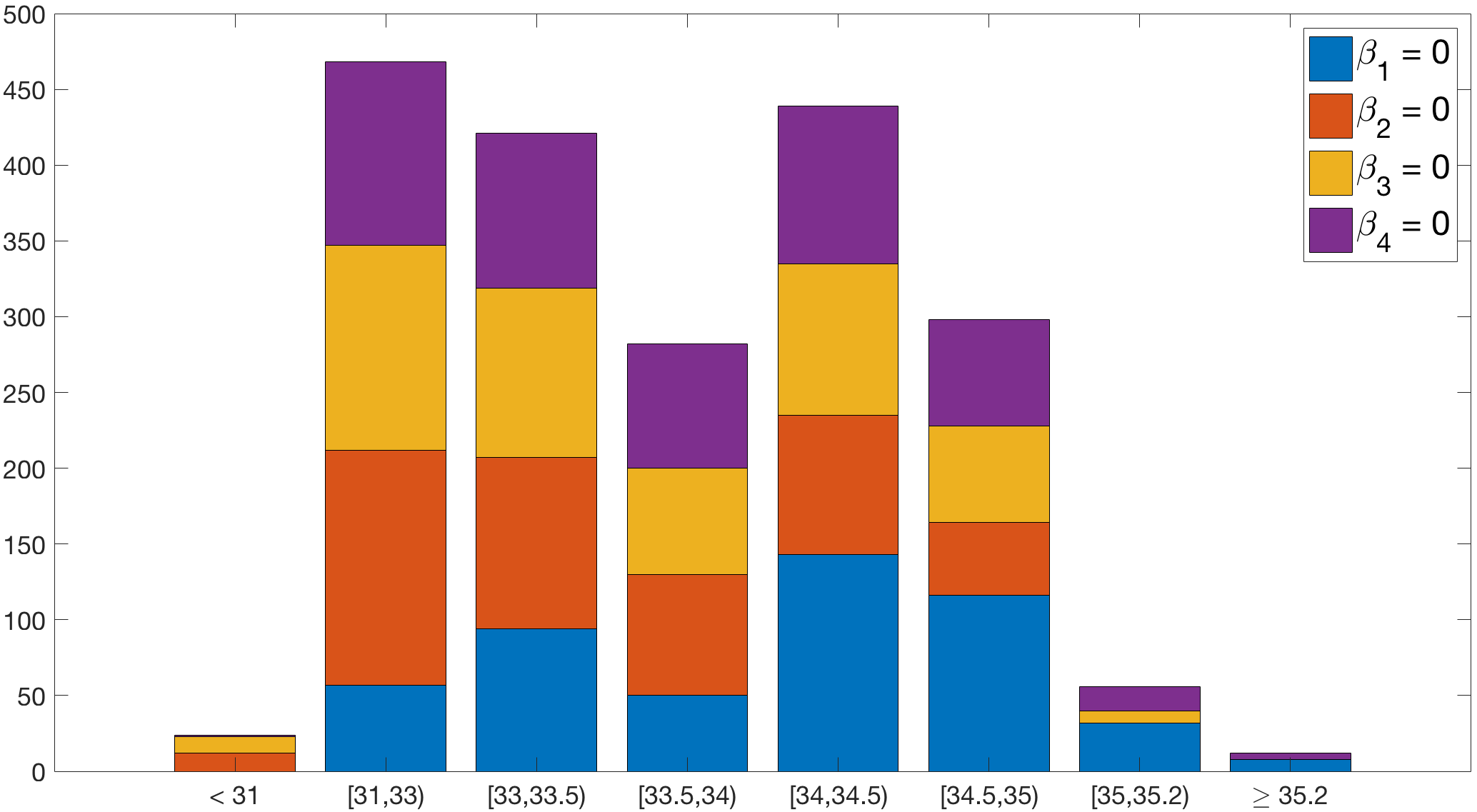}}\hfill
\subfloat[Relative Error]{\includegraphics[width=0.32\textwidth]{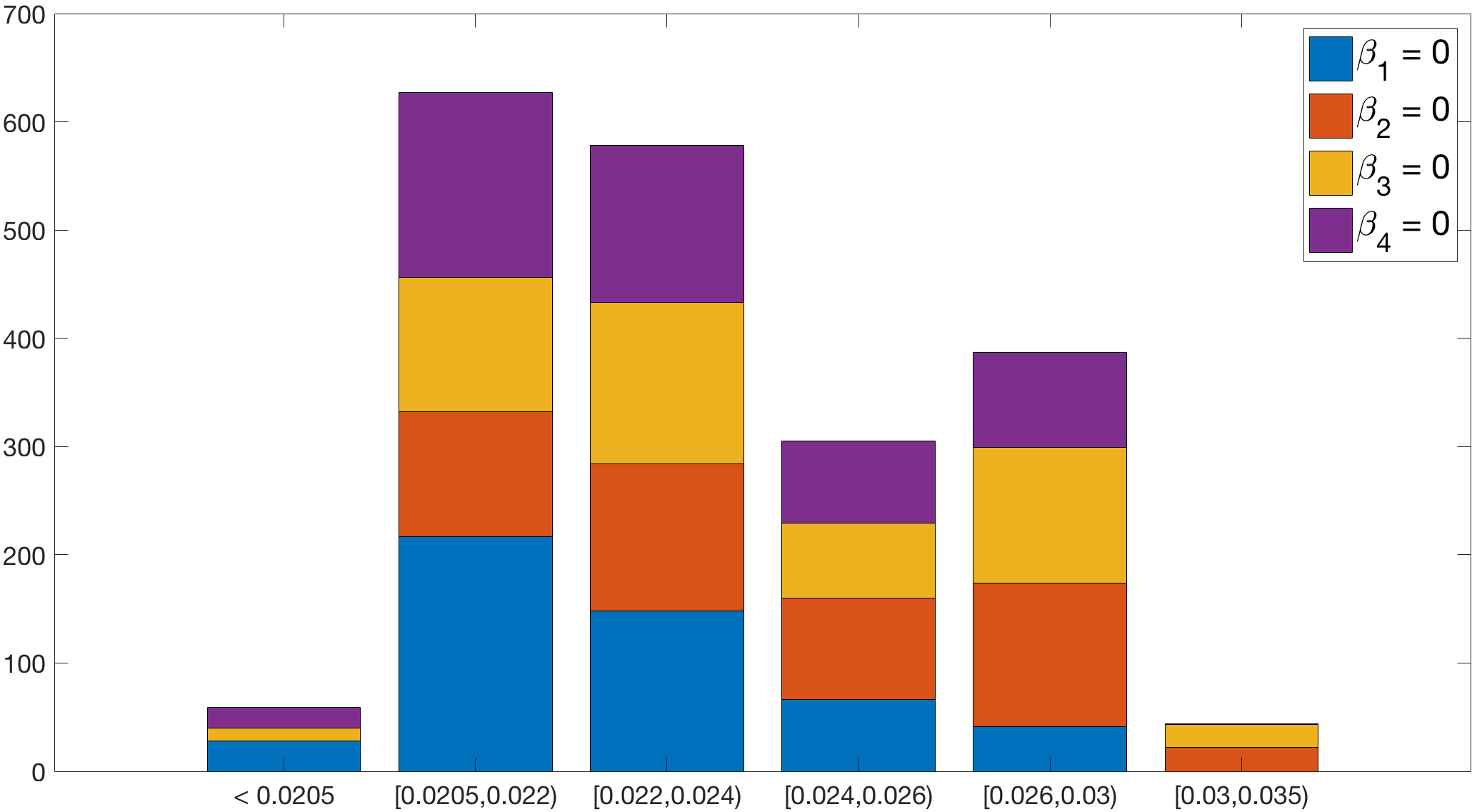}}
\caption{Histograms for piecewise affine image in the scenario that one $\beta_i$ is equal to zero: 1) $\beta_1 = 0$ (blue), 2) $\beta_2 = 0$ (orange), 3) $\beta_3 = 0$ (yellow), 4) $\beta_4 = 0$ (purple). Note that the bars do not have equal width.}
\label{fig:barplotsOneZeroSquare}
\end{figure}

Furthermore, the results in Figure \ref{fig:barplotsOneZeroSquare} reflect the ones in Figure \ref{fig:barplotsOneZeroTrui}. Setting the curl term to zero has a less negative effect compared to omitting the divergence term. However, we cannot make more general statements or draw conclusions regarding the shear terms, as the histograms are rather equally distributed with respect to the four parameter combination scenarios.

\begin{figure}[h]
\captionsetup[subfigure]{labelformat=empty}
\centering
\subfloat[SSIM]{\includegraphics[width=0.32\textwidth]{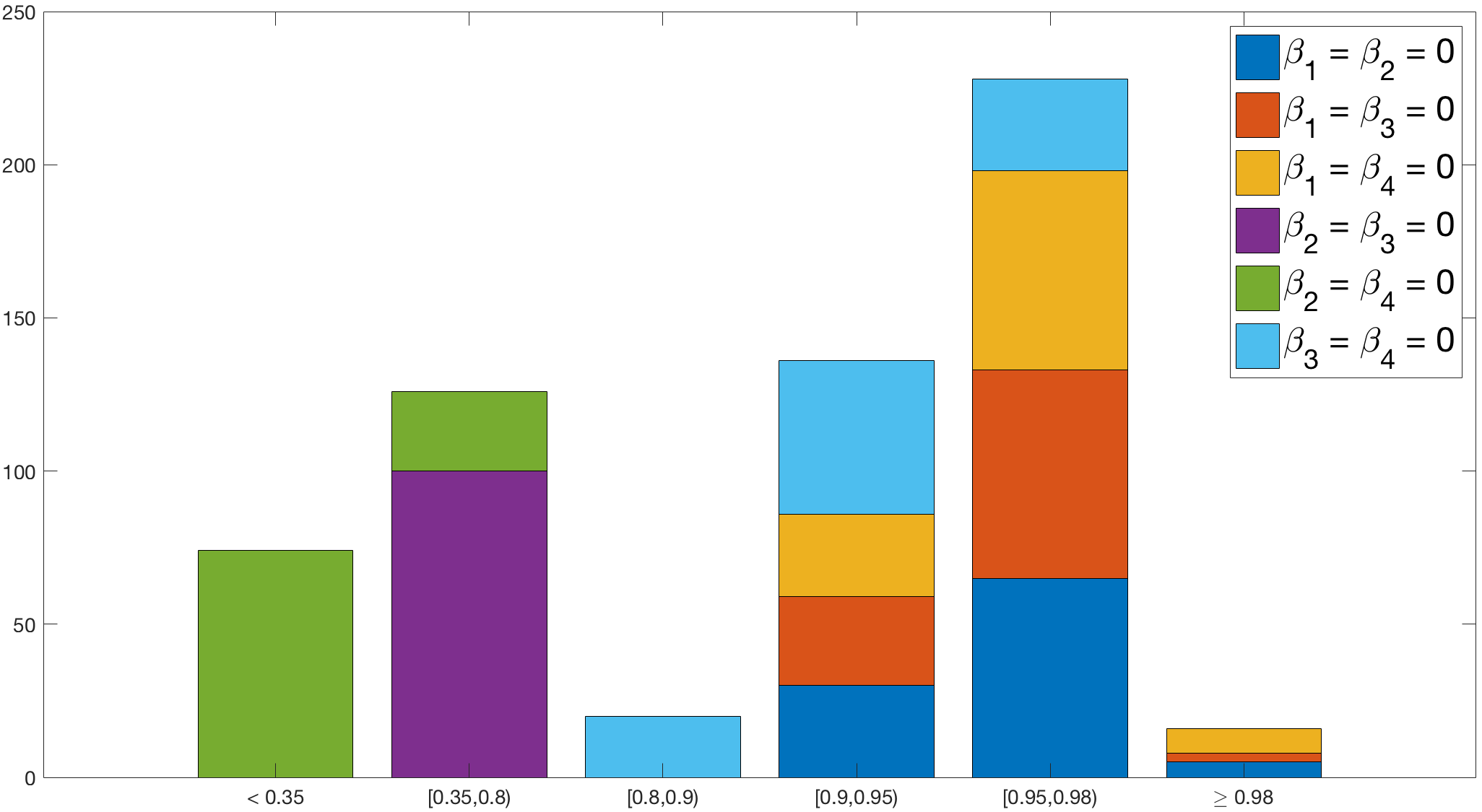}}\hfill
\subfloat[PSNR]{\includegraphics[width=0.32\textwidth]{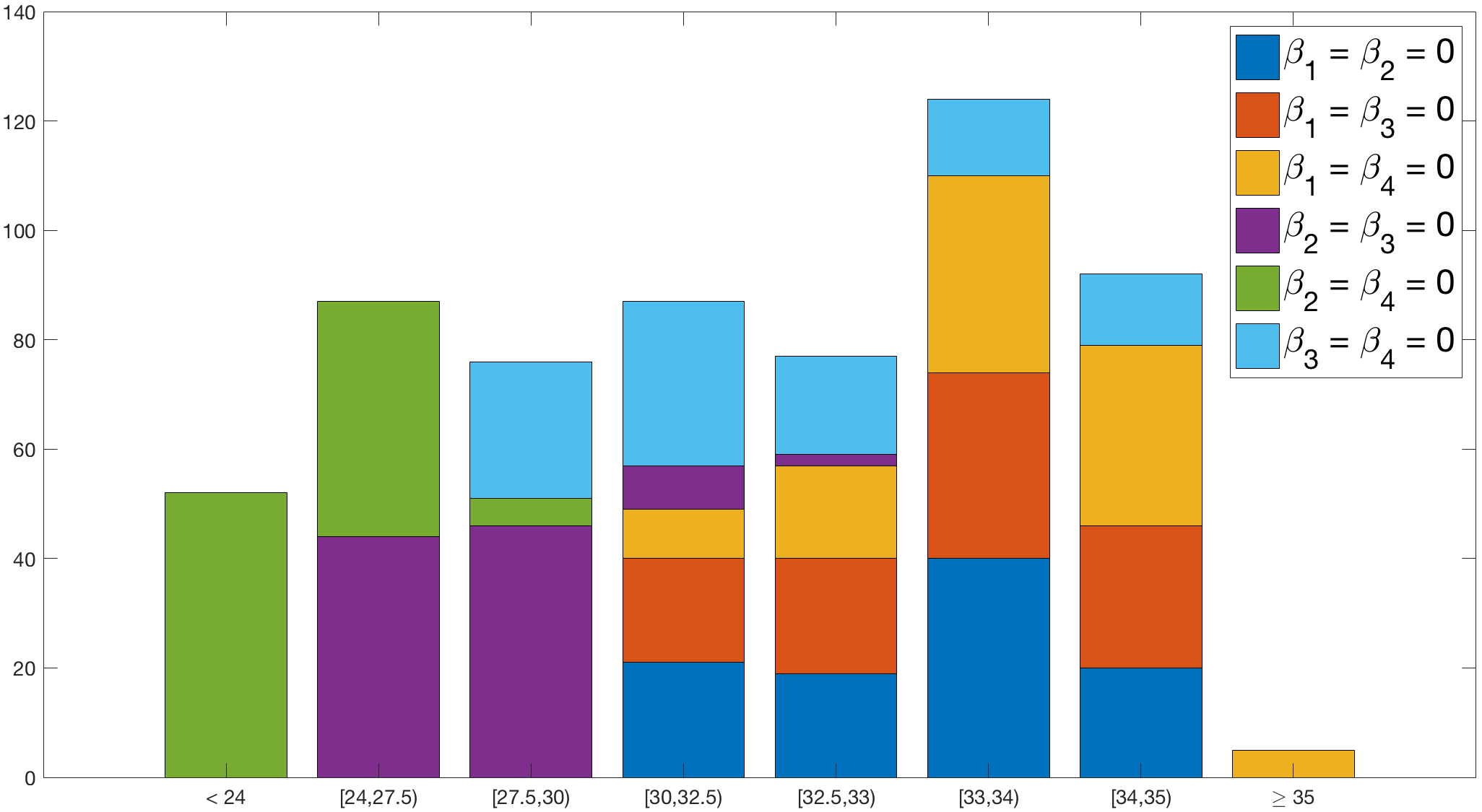}}\hfill
\subfloat[Relative Error]{\includegraphics[width=0.32\textwidth]{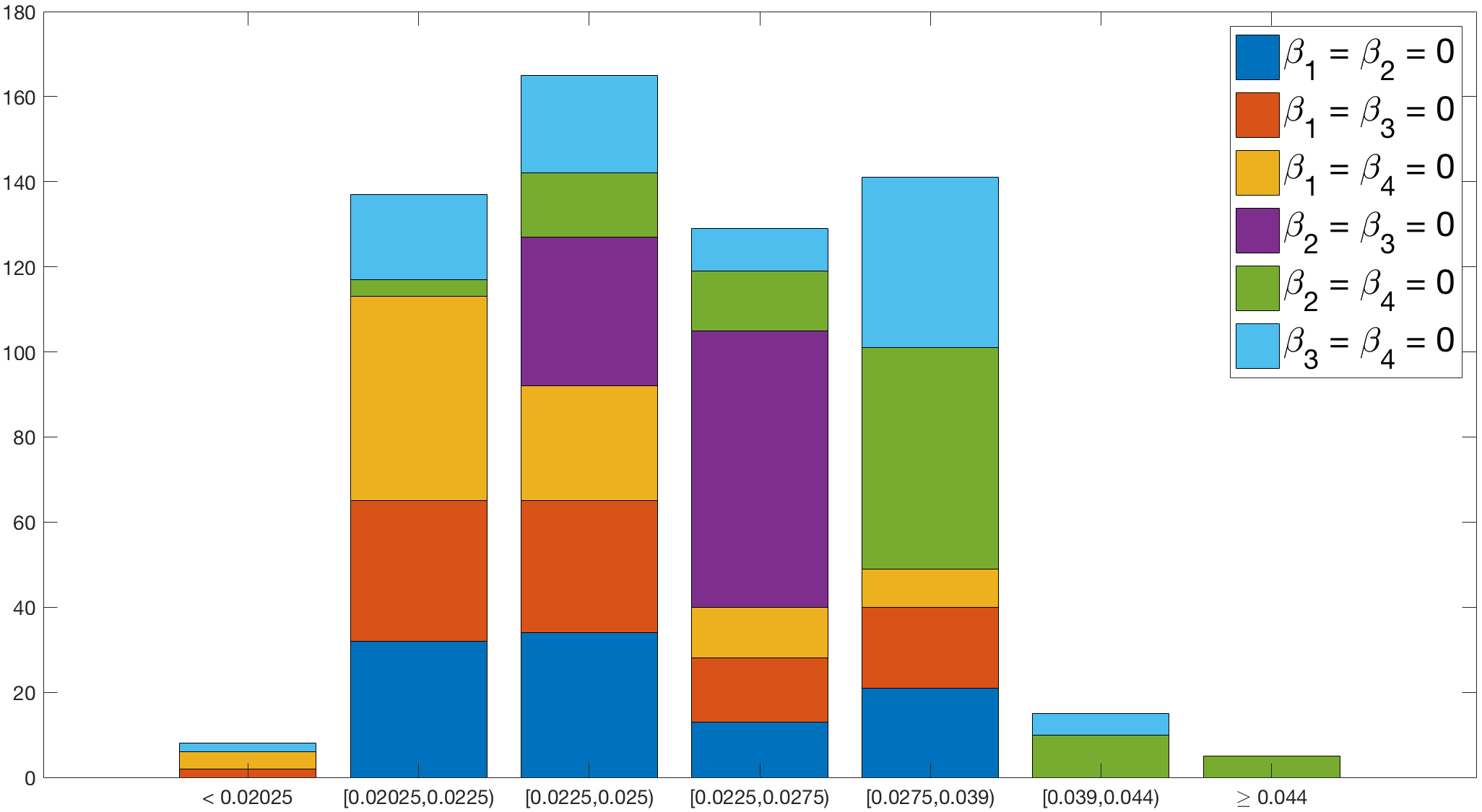}}
\caption{Histograms for piecewise affine image in the scenario that two $\beta_i$ are equal to zero: 1) $\beta_1 = \beta_2 = 0$ (blue), 2) $\beta_1 = \beta_3 = 0$ (orange), 3) $\beta_1 = \beta_4 = 0$ (yellow), 4) $\beta_2 = \beta_3 = 0$ (purple), 5) $\beta_2 = \beta_4 = 0$ (green), 6) $\beta_3 = \beta_4 = 0$ (cyan). Note that the bars do not have equal width.}
\label{fig:barplotsTwoZeroSquare}
\end{figure}

Figure \ref{fig:barplotsTwoZeroSquare} seems to reinforce the statements for Figure \ref{fig:barplotsTwoZeroTrui}. It can be clearly observed that the case where the divergence and the second component of the shear are equal to zero leads to the worst reconstructions with respect to the three quality measures. Also, similar to before, the combinations $\beta_2 = \beta_3 = 0$ and $\beta_3 = \beta_4 = 0$ perform rather poorly.

\section{Conclusion}
\label{sec:conclusion}

\begin{sloppypar}
Starting from our SVF model presented in \cite{SVF}, where we motivated sparsity enforcement of a vector field related to the gradient of the underlying image by an image compression framework using PDE-based diffusion inpainting methods, we extended \eqref{eq:SVF} further by introducing a novel regulariser penalising a joint $L^1$ norm incorporating differential vector field operators. More specifically, we promote sparsity in the curl, divergence and both components of the shear of the vector field at hand. We could dispose of the point artefacts observed in the denoising model in \cite{SVF}. Moreover, similar to well-established higher-order TV models, we avoid the staircasing effect while at the same time enabling piecewise affine reconstructions.

We showed that our unified regulariser can be viewed as a generalisation of a number of already existing frameworks: We can recover TV, our previously presented SVF model, CEP, second-order TGV and ICTV. Furthermore, we showed the capability of our model to interpolate between the latter two methods by changing the value of only one weighting parameter. We also saw that a wide range of parameters $\beta_i$ yields very similar results, confirming the robustness of the approach. In particular, this holds true if three of the $\beta_i$ are chosen to be non-zero (while not approaching infinity all at the same time) or if we pick two out of $\beta_2$, $\beta_3$ and $\beta_4$ to be positive weights, as we concluded that the curl has only marginal influence.
Our results also lead to the conjecture that visually more pleasing reconstructions are obtained if we indeed arrive at singularities along edges rather than in points, since the latter are visible as artefacts in the images. In view of this paper, it is hence recommended to either combine at least three natural vector field operators or the divergence and one component of the shear for the regularisation.

There are various interesting directions for future research. As we mentioned earlier, the denoising case was just an academic testbed for studying the regularisations; its use might become much more relevant in other inverse problems and image reconstruction frameworks. 
Moreover, our results could naturally be reconsidered in the regularisation of problems for vector fields such as motion estimation, where divergence, curl, and shear even have physical interpretations. 
In this context it is an often heard conjecture that in light of the Helmholtz decomposition divergence and curl are sufficient for regularisation. However, the combination of the two operators only yields satisfactory regularisation properties if their joint penalisation is combined with suitable boundary conditions as, for instance, accomplished in \cite{staggeredGrids5}.
Since the results presented in this paper indicate that a functional combining at least three suitably chosen differential operators is also capable of providing an equivalent regularisation in the space of bounded variation without the need to guarantee any boundary conditions, this might be an interesting alternative approach for the regularisation of vector fields that might require a less cumbersome numerical implementation.  
Furthermore, it would be interesting to reconsider higher-order regularisation on graphs, in particular to study variants of TGV on such structures. Since the divergence is the only natural differential operator for vector fields (edge functions) on graphs, our approach might be even more relevant in such a setting. 

Finally, we come to the issue of optimal parameter choice, since our approach yields quite some freedom in this respect. To overcome this, parameter learning using bi-level optimisation techniques might be particularly suited.
\end{sloppypar}


\section*{Acknowledgements}
The authors thank Kristian Bredies, Martin Holler (both University of Graz) and Christoph Schn\"orr (University of Heidelberg) for useful discussions and links to literature.

This work has been supported by ERC via Grant EU FP 7 - ERC Consolidator Grant 615216 LifeInverse. JSG acknowledges support by The Alan Turing Institute under the EPSRC grant EP/N510129/1 and by the NIHR Cambridge Biomedical Research Centre. The authors would like to thank the Isaac Newton Institute for Mathematical Sciences, Cambridge, for support and hospitality during the programme Variational Methods for Imaging and Vision, where work on this paper was undertaken, supported by EPSRC grant no EP/K032208/1 and the Simons Foundation.

\section*{Data Statement}
The corresponding MATLAB\textsuperscript{\textregistered} code (implemented and tested with R2018a) is publicly available on GitHub\footnote{Image denoising using the unified model in this work: \url{https://github.com/JoanaGrah/VectorOperatorSparsity}; image compression using the sparse vector fields model in \cite{SVF}: \url{https://github.com/JoanaGrah/SparseVectorFields}}.

\bibliographystyle{plain}
\bibliography{Manuscript}   


\appendix
\clearpage
\section{Derivation of Nullspaces}
\label{app:nullspaces}
In the following, we aim at characterising the set of all $u \in L^2(\Omega)$ for which $R_{\bm \beta}(u) = 0$ holds.

\begin{sloppypar}
At first we consider the case $\beta_2=0$ and $\beta_3,\beta_4 > 0$. Following the line of argument for the derivation of the nullspaces in Section \ref{sec:diffop}, it is clear that in order to be in the nullspace $u$ has to satisfy
\begin{equation*}
u(x) = U(x_1 + x_2) + V(x_1 - x_2) = U_1(x_1) + U_2(x_2).
\end{equation*}
Calculation of first- and second-order derivatives of $u$ then yields the following identities for the gradient and the Hessian of $u$:
\begin{align*}
\nabla u (x) &= \begin{pmatrix}
\frac{\partial}{\partial x_1}U(x_1 + x_2) + \frac{\partial}{\partial x_1}V(x_1 - x_2)\\
\frac{\partial}{\partial x_2}U(x_1 + x_2) - \frac{\partial}{\partial x_2}V(x_1 - x_2)
\end{pmatrix}\\
&= \begin{pmatrix}
\frac{\partial}{\partial x_1}U_1(x_1) + \frac{\partial}{\partial x_1}U_2(x_2)\\
\frac{\partial}{\partial x_2}U_1(x_1) + \frac{\partial}{\partial x_2}U_2(x_2)
\end{pmatrix}
= \begin{pmatrix}
\frac{\partial}{\partial x_1}U_1(x_1)\\
\frac{\partial}{\partial x_2}U_2(x_2)
\end{pmatrix}
\end{align*}
and 
\begin{equation*}
Hu = \begin{pmatrix}
(Hu)_{11} & (Hu)_{12}\\
(Hu)_{21} & (Hu)_{22}
\end{pmatrix},
\end{equation*}
where
\begin{align*}
(Hu)_{11}(x) &= \frac{\partial^2}{\partial x_1^2}U(x_1 + x_2) + \frac{\partial^2}{\partial x_1^2}V(x_1 - x_2)\\
&= \frac{\partial^2}{\partial x_1^2}U_1(x_1)\\
(Hu)_{12}(x) &= \frac{\partial^2}{\partial x_1\partial x_2}U(x_1 + x_2) - \frac{\partial^2}{\partial x_1\partial x_2}V(x_1 - x_2)\\
&= \frac{\partial^2}{\partial x_1 \partial x_2}U_1(x_1) + \frac{\partial^2}{\partial x_1 \partial x_2}U_2(x_2) = 0\\
(Hu)_{21}(x) &= \frac{\partial^2}{\partial x_1\partial x_2}U(x_1 + x_2) - \frac{\partial^2}{\partial x_1\partial x_2}V(x_1 - x_2)\\
&= \frac{\partial^2}{\partial x_1 \partial x_2}U_1(x_1) + \frac{\partial^2}{\partial x_1 \partial x_2}U_2(x_2) = 0\\
(Hu)_{22}(x) &= \frac{\partial^2}{\partial x_2^2}U(x_1 + x_2) + \frac{\partial^2}{\partial x_2^2}V(x_1 - x_2)\\
&= \frac{\partial^2}{\partial x_2^2}U_2(x_2).
\end{align*}
In particular, we observe:
\begin{equation*}
\frac{\partial^2}{\partial x_1^2} U_1(x_1) = \frac{\partial^2}{\partial x_2^2} U_2(x_2) \quad \text{ for all } x_1, x_2,
\end{equation*}
which can only be true if $\frac{\partial^2}{\partial x_1^2} U_1(x_1)$ and $\frac{\partial^2}{\partial x_2^2} U_2(x_2)$ are equal and constant, i.e. $\frac{\partial^2}{\partial x_1^2} U_1(x_1) = \frac{\partial^2}{\partial x_2^2} U_2(x_2) = c$.\\
Twofold integration of $\frac{\partial^2}{\partial x_1^2} U_1$ respectively $\frac{\partial^2}{\partial x_2^2} U_2$ on condition that the former only depends on $x_1$ while the latter only depends on $x_2$ yields:
\begin{align*}
\frac{\partial}{\partial x_1} U_1(x_1) = \int c \, dx_1 = cx_1 + d_1,\\
\frac{\partial}{\partial x_2} U_2(x_2) = \int c \, dx_2 = cx_2 + e_1
\end{align*}
and thus
\begin{align*}
&U_1(x_1) = \int cx_1 + d_1 \, dx_1 = cx_1^2 + d_1x_1 + d_0\\
&U_2(x_2) = \int cx_2 + e_1 \, dx_1 = cx_2^2 + e_1x_2 + e_0\\
&\Longrightarrow u = c(x_1^2 + x_2^2) + d_1 x_1 + e_1 x_2 + (d_0 + e_0).
\end{align*}
Consequently the nullspace only consists of functions that are linear combinations of $x_1^2 + x_2^2, x_1, x_2$ and $1$.
\end{sloppypar}

We continue with the case $\beta_3 = 0$ and $\beta_2,\beta_4 > 0$. By the discussion of the nullspaces in Section \ref{sec:diffop} $u$ has to be harmonic, i.e. 
\begin{equation*}
\frac{\partial^2}{\partial x_1^2}u(x) + \frac{\partial^2}{\partial x_2^2}u(x) = 0,
\end{equation*}
and moreover it has to be of the form $u(x) = U_1(x_1) + U_2(x_2)$.
Taking into account the calculations of the first- and second-order partial derivatives in the previous case, we easily see that the above equality is equivalent to 
\begin{equation*}
\frac{\partial^2}{\partial x_1^2}U_1(x_1) + \frac{\partial^2}{\partial x_2^2}U_2(x_2) = 0 \quad \text{ for all } x_1,x_2,
\end{equation*}
which obviously can only be true if $\frac{\partial^2}{\partial x_1^2}U_1(x_1)$ and $\frac{\partial^2}{\partial x_2^2}U_2(x_2)$ are constant with constants summing to zero. On this basis we analogously to the previous case integrate $\frac{\partial^2}{\partial x_1^2} U_1$ and $\frac{\partial^2}{\partial x_2^2} U_2$ twice on condition that the former only depends on $x_1$ and the latter only depends on $x_2$
\begin{align*}
\frac{\partial}{\partial x_1} U_1(x_1) &= \int c \, dx_1 = cx_1 + d_1,\\
\frac{\partial}{\partial x_2} U_2(x_2) &= \int -c \, dx_2 = -cx_2 + e_1
\end{align*}
and hence
\begin{align*}
&U_1(x_1) = \int cx_1 + d_1 \, dx_1 = cx_1^2 + d_1x_1 + d_0\\
&U_2(x_2) = \int -cx_2 + e_1 \, dx_1 = -cx_2^2 + e_1x_2 + e_0\\
&\Longrightarrow u = c(x_1^2 - x_2^2) + d_1 x_1 + e_1 x_2 + (d_0 + e_0).
\end{align*}
The nullspace thus only consists of functions that are linear combinations of $x_1^2 - x_2^2, x_1, x_2$ and $1$.

Finally, we study the case $\beta_4 = 0$ and $\beta_2, \beta_3 > 0$. Analogous to the previous case we argue that by the characterisation of the nullspaces in Section \ref{sec:diffop} $u$ is of the form $u(x) = U(x_1 + x_2) + V(x_1 - x_2)$ and again has to be harmonic, i.e.
\begin{equation*}
\frac{\partial^2}{\partial x_1^2}u(x) + \frac{\partial^2}{\partial x_2^2}u(x) = 0.
\end{equation*}
Again, we reconsider the first- and second-order partial derivatives from the first case and obtain for all $x_1,x_2$
\begin{align*}
&2 \left(\frac{\partial^2}{\partial x_1^2}U(x_1 + x_2) + \frac{\partial^2}{\partial x_2^2}V(x_1 - x_2)\right) = 0 
\end{align*}
which implies that $\frac{\partial^2}{\partial x_1^2}U$ and $\frac{\partial^2}{\partial x_2^2}V$ are constant with constants summing to zero. By twofold integration of $\frac{\partial^2}{\partial x_1^2}U$ and $\frac{\partial^2}{\partial x_2^2}V$ on condition that the former depends on $x_1 + x_2$ and the latter depends on $x_1 - x_2$ we thus obtain:
\begin{align*}
&\frac{\partial}{\partial x_1} U(x_1 + x_2) = \int c \, d(x_1 + x_2) = c (x_1 + x_2) + d_1,\\
&\frac{\partial}{\partial x_2} V(x_1 - x_2) = \int -c \, d(x_1 - x_2) = -c(x_1 - x_2) + e_1
\end{align*}
and hence
\begin{align*}
&U(x_1 + x_2) = \int c(x_1 + x_2) + d_1 \,d(x_1 + x_2) = c(x_1 + x_2)^2 + d_1(x_1 + x_2) + d_0\\
&V(x_1 - x_2) = \int -c(x_1 - x_2) + e_1 \,d(x_1 - x_2) = -c(x_1 - x_2)^2 + e_1(x_1 - x_2) + e_0\\
&\Longrightarrow u = 4c x_1 x_2 + (d_1 + e_1) x_1 + (d_1 - e_1) x_2 + (d_0 + e_0).
\end{align*}
As a result the nullspace contains all functions that are linear combinations of $x_1x_2,x_1,x_2$ and $1$.

\section{Proof of Theorem \ref{rotinvthm}}
\label{app:proof of Theorem 4}

\setcounter{thm}{3}

\begin{thm}
Let $\beta_i \geq 0$ for $i = 1,\dots,4$ and let $\beta_3 = \beta_4$. Then the regulariser $R_{\bm \beta}(u)$ is rotationally invariant, i.e., for an orthonormal rotation matrix $\bm{Q} \in \mathbb{R}^{2\times2}$ with
\begin{equation*}
\bm{Q}(\theta) = \begin{pmatrix}
\cos(\theta) & -\sin(\theta)\\
\sin(\theta) & \cos(\theta)
\end{pmatrix} \quad \text{ for } \theta \in \left[0,2\pi\right)
\end{equation*}
and for $u \in BV(\Omega)$ it holds that $\check{u} \in BV(\Omega)$, where $\check{u}=u\circ \bm{Q}$, i.e. $\check{u}(x) = u(\bm{Q}x)$ for a.e. $x \in \Omega$, and
\begin{equation*}
R_{\bm \beta}(\check{u}) = R_{\bm \beta}(u).
\end{equation*}
\end{thm}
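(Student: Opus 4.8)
The plan is to work with the primal (infimal convolution) form of $R_{\bm\beta}$ in \eqref{eq:unifiedRegulariser} and to exhibit an explicit, norm-preserving bijection on the set of competitor vector measures that is induced by the rotation $\bm Q$. Throughout I assume, as is implicit in the notion of rotational invariance, that $\bm Q\,\Omega=\Omega$ (e.g.\ $\Omega$ a disc or $\R^2$), so that $\check u=u\circ\bm Q$ is defined on $\Omega$ and the change of variables $x\mapsto\bm Q x$ maps the relevant function and measure spaces onto themselves while preserving Lebesgue measure. For $w\in\mathcal M(\Omega,\R^2)$ I define its rotated companion $\check w$ by pushforward under $x\mapsto\bm Q^{\top}x$ combined with the action of $\bm Q^{\top}$ on the values; on $C^1$ densities this is simply $\check w(x)=\bm Q^{\top}w(\bm Q x)$. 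The chain rule then gives $\nabla\check u(x)=\bm Q^{\top}(\nabla u)(\bm Q x)$ and the Jacobian transformation rule $\nabla\check w(x)=\bm Q^{\top}(\nabla w)(\bm Q x)\,\bm Q$, both understood distributionally. Since $w\mapsto\check w$ is a bijection of $\mathcal M(\Omega,\R^2)$ onto itself, it suffices to show that each of the two summands in \eqref{eq:unifiedRegulariser} is left invariant when $(u,w)$ is replaced by $(\check u,\check w)$; taking the infimum over $w$ then yields the claim with equality rather than merely an inequality.

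The first summand is immediate: $\nabla\check u-\check w=\bm Q^{\top}\big((\nabla u-w)\circ\bm Q\big)$, and since the Euclidean norm is invariant under the orthogonal matrix $\bm Q^{\top}$ and the Radon norm is invariant under the measure-preserving change of variables $\bm Q$, we obtain $\Vert\nabla\check u-\check w\Vert_{\mathcal M(\Omega,\R^2)}=\Vert\nabla u-w\Vert_{\mathcal M(\Omega,\R^2)}$. The heart of the argument is the behaviour of $\nabla_N$. Writing the Jacobian as a $2\times2$ matrix and decomposing it into its trace, its antisymmetric part and its symmetric traceless part, I would observe that under the conjugation $\nabla w\mapsto\bm Q^{\top}\nabla w\,\bm Q$ the trace and the antisymmetric part are invariant — so that $\mydiv$ and $\mycurl$ are unchanged — while the symmetric traceless part, which carries precisely $\mysheara$ and $\myshearb$, rotates by the doubled angle $2\theta$. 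Concretely this gives $\nabla_N\check w(x)=\widehat{\bm Q}\,(\nabla_N w)(\bm Q x)$ with the block-diagonal orthogonal matrix
\begin{equation*}
\widehat{\bm Q}=\begin{pmatrix} 1 & 0 & 0 & 0\\ 0 & 1 & 0 & 0\\ 0 & 0 & \cos 2\theta & -\sin 2\theta\\ 0 & 0 & \sin 2\theta & \cos 2\theta\end{pmatrix}.
\end{equation*}

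The assumption $\beta_3=\beta_4$ now enters decisively. On the shear block the weighting $\diag(\sqrt{\beta_3},\sqrt{\beta_4})=\sqrt{\beta_3}\,\bm I$ is a multiple of the identity and therefore commutes with the planar rotation, so that $|\diag(\bm\beta)\,\widehat{\bm Q}\,v|=|\diag(\bm\beta)\,v|$ for every $v\in\R^4$. Consequently $\diag(\bm\beta)\nabla_N\check w$ has the same pointwise Euclidean norm as $\big(\diag(\bm\beta)\nabla_N w\big)\circ\bm Q$, and invariance of the Radon norm under the change of variables gives $\Vert\diag(\bm\beta)\nabla_N\check w\Vert_{\mathcal M(\Omega,\R^4)}=\Vert\diag(\bm\beta)\nabla_N w\Vert_{\mathcal M(\Omega,\R^4)}$. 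Combining the two invariances and taking the infimum over $w$ proves $R_{\bm\beta}(\check u)=R_{\bm\beta}(u)$, and the same transformation applied to $\nabla\check u$ shows $\check u\in BV(\Omega)$.

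The main obstacle I anticipate is not the algebra — the $2\theta$-rotation of the shear pair is a short and clean computation — but the measure-theoretic bookkeeping needed to make this rigorous for general $w\in\mathcal M(\Omega,\R^2)$ rather than for smooth densities: one must define the rotated measure $\check w$, justify the distributional identity $\nabla\check w(x)=\bm Q^{\top}(\nabla w)(\bm Q x)\,\bm Q$, and verify invariance of the Radon norm under pushforward, most cleanly by testing against $\varphi\in C_c^\infty$ and transferring the rotation onto the test functions. One should also be explicit about the domain hypothesis $\bm Q\,\Omega=\Omega$, without which $\check u$ is not even defined on $\Omega$.
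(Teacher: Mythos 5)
Your proposal is correct and follows essentially the same route as the paper's proof in Appendix B: both substitute $\check w=\bm Q^{\top}w\circ\bm Q$, observe that the first summand is invariant by orthogonality of $\bm Q$, and reduce the second summand to the fact that $\mydiv$ and $\mycurl$ are unchanged under the conjugation $\nabla w\mapsto\bm Q^{\top}\nabla w\,\bm Q$ while the shear pair rotates by $2\theta$, so that $\beta_3=\beta_4$ makes the weighted Euclidean norm invariant. Your packaging via the trace/antisymmetric/symmetric-traceless decomposition and the block-orthogonal matrix $\widehat{\bm Q}$ is a cleaner way to organise the same componentwise computation the paper carries out explicitly, and your remarks on the pushforward definition of $\check w$ and the implicit hypothesis $\bm Q\,\Omega=\Omega$ address bookkeeping the paper leaves tacit.
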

\begin{proof}
In order to prove the assertion we consider $\check{u}=u \circ \bm{Q}$ and show that we obtain $R_{\bm \beta}(\check{u}) = R_{\bm \beta}(u)$, where as before
\begin{equation*}
R_{\bm \beta}(u) = \inf_{w \in \mathcal{M}(\Omega,\mathbb{R}^2)} \Vert \nabla u - w \Vert_{\mathcal{M}(\Omega,\mathbb{R}^2)} + \Vert \text{diag}({\bm \beta})\nabla_N w\Vert_{\mathcal{M}(\Omega,\mathbb{R}^4)}.
\end{equation*}
Inserting $\check{u}$ in the first term of the regulariser, we realise that we obtain the equivalence to the first term of $R_{\bm \beta}(u)$ by choosing $\check{w}=\bm{Q}^{\top}w \circ \bm{Q}$, i.e., 
$$ \int_\Omega \varphi(x)~d\check w = \int_\Omega \bm{Q} \varphi(\bm{Q}^T x) ~dw, \qquad \forall \varphi \in C_0(\Omega;\mathbb{R}^2),$$
since
\begin{align*}
\Vert \nabla \check{u} - \check{w} \Vert_{\mathcal{M}(\Omega,\mathbb{R}^2)} & \; = \Vert \bm{Q}^{\top}\nabla u\circ \bm{Q} - \bm{Q}^{\top}w\circ \bm{Q}\Vert_{\mathcal{M}(\Omega,\mathbb{R}^2)}\\
& \; = \Vert \bm{Q}^{\top}\left(\nabla u \circ \bm{Q} - w \circ \bm{Q}\right)\Vert_{\mathcal{M}(\Omega,\mathbb{R}^2)}\\
& \; = \Vert \nabla (u \circ \bm{Q}) - w \circ \bm{Q} \Vert_{\mathcal{M}(\Omega,\mathbb{R}^2)}
\end{align*}

Thus, if we can show that for $\check{w}=\bm{Q}^{\top}w \circ \bm{Q}$  we also obtain the equivalence of the second term of the regulariser to the second term of $R_{\bm \beta}(u)$, we have proven the assertion. To this end we set $v = \bm{Q}^{\top}w$ and compute
\begin{equation*}
v = \begin{pmatrix}
\cos(\theta)w_1 + \sin(\theta)w_2\\ - \sin(\theta)w_1 + \cos(\theta)w_2
\end{pmatrix}.
\end{equation*}
In addition we need the Jacobian matrix $\nabla v$ of $v$, where
\begin{align*}
(\nabla v)_{11} &= \cos(\theta)\frac{\partial w_1}{\partial x_1} + \sin(\theta)\frac{\partial w_2}{\partial x_1}, \quad &(\nabla v)_{12} &= \cos(\theta)\frac{\partial w_1}{\partial x_2} + \sin(\theta)\frac{\partial w_2}{\partial x_2},\\
(\nabla v)_{21} &= -\sin(\theta)\frac{\partial w_1}{\partial x_1} + \cos(\theta) \frac{\partial w_2}{\partial x_1}, \quad &(\nabla v)_{22} &= -\sin(\theta)\frac{\partial w_1}{\partial x_2} + \cos(\theta) \frac{\partial w_2}{\partial x_2}.
\end{align*}
We can hence obtain the Jacobian matrix $\nabla \check{w}$ of $\check{w}$ by computing $\nabla \check{w} = \bm{Q}^{\top}\nabla v$ yielding
\begin{align*}
(\nabla \check{w})_{11} & \; = \cos^2(\theta) \frac{\partial w_1}{\partial x_1} + \cos(\theta)\sin(\theta) \frac{\partial w_2}{\partial x_1} + \cos(\theta)\sin(\theta) \frac{\partial w_1}{\partial x_2} + \sin^2(\theta) \frac{\partial w_2}{\partial x_2},\\
(\nabla \check{w})_{12} & \; = -\cos(\theta)\sin(\theta) \frac{\partial w_1}{\partial x_1} - \sin^2(\theta) \frac{\partial w_2}{\partial x_1} + \cos^2(\theta) \frac{\partial w_1}{\partial x_2} + \cos(\theta)\sin(\theta) \frac{\partial w_2}{\partial x_2},\\
(\nabla \check{w})_{21} & \; = \cos^2(\theta) \frac{\partial w_2}{\partial x_1} - \cos(\theta)\sin(\theta) \frac{\partial w_1}{\partial x_1} + \cos(\theta)\sin(\theta) \frac{\partial x_2}{\partial x_2} - \sin^2(\theta) \frac{\partial w_1}{\partial x_2},\\
(\nabla \check{w})_{22} & \; = -\cos(\theta)\sin(\theta) \frac{\partial w_2}{\partial x_1} + \sin^2(\theta) \frac{\partial w_1}{\partial x_1} + \cos^2(\theta) \frac{\partial w_2}{\partial x_2} - \cos(\theta)\sin(\theta)\frac{\partial w_1}{\partial x_2}.
\end{align*}
Based on the Jacobian $\nabla \check{w}$ we can calculate the curl, the divergence and the two components of the shear for $\check{w}$:
\begin{align*}
\mycurl(\check{w}) &= (\nabla \check{w})_{21} - (\nabla \check{w})_{12}\\
&= (\cos^2(\theta) + \sin^2(\theta)) \left(\frac{\partial w_2}{\partial x_1} - \frac{\partial w_1}{\partial x_2}\right) = \mycurl(w),
\end{align*}
\begin{align*}
\mydiv(\check{w}) &= (\nabla \check{w})_{11} - (\nabla \check{w})_{22}\\
&= (\cos^2(\theta) + \sin^2(\theta)) \left(\frac{\partial w_1}{\partial x_1} + \frac{\partial w_2}{\partial x_2}\right) = \mydiv(w),
\end{align*}
\begin{align*}
\mysheara(\check{w}) &= (\nabla \check{w})_{22} - (\nabla \check{w})_{11}\\
&= (\cos^2(\theta) - \sin^2(\theta)) \left(\frac{\partial w_2}{\partial x_2} - \frac{\partial w_1}{\partial x_1}\right) -2\cos(\theta)\sin(\theta) \left(\frac{\partial w_1}{\partial x_2} + \frac{\partial w_2}{\partial x_1} \right)\\
&= (\cos^2(\theta) - \sin^2(\theta)) \mysheara(w) - 2\cos(\theta)\sin(\theta) \myshearb(w),
\end{align*}
\begin{align*}
\myshearb(\check{w}) &= (\nabla \check{w})_{12} + (\nabla \check{w})_{21}\\
&= (\cos^2(\theta) - \sin^2(\theta)) \left(\frac{\partial w_1}{\partial x_2} + \frac{\partial w_2}{\partial x_1}\right) -2\cos(\theta)\sin(\theta) \left(\frac{\partial w_2}{\partial x_2} + \frac{\partial w_1}{\partial x_1} \right)\\
&= (\cos^2(\theta) - \sin^2(\theta)) \myshearb(w) + 2\cos(\theta)\sin(\theta) \mysheara(w),
\end{align*}
Next, we consider $\vert \text{diag}({\bm \beta})\nabla_N \check{w}\vert$, where for the sake of readability, we define 
\begin{align*}
a := (\cos^2(\theta) - \sin^2(\theta)), \qquad b := \cos(\theta)\sin(\theta).
\end{align*}
Then we obtain:
\begin{align*}
\vert \text{diag}({\bm \beta})\nabla_N \check{w} \vert & \; = \beta_1 (\mycurl(\check{w}))^2 + \beta_2 (\mydiv(\check{w}))^2 + \beta_3 (\mysheara(\check{w}))^2 + \beta_4 (\myshearb(\check{w}))^2\\
& \; = \beta_1 (\mycurl(w))^2 + \beta_2 (\mydiv(w))^2\\
& \qquad + \beta_3 a^2 (\mysheara(w))^2 - \beta_3 ab \mysheara(w)\myshearb(w) + \beta_3 4 b^2 (\myshearb(w))^2\\
& \qquad + \beta_4 a^2 (\myshearb(w))^2 + \beta_4 ab\mysheara(w)\myshearb(w) + \beta_4 4 b^2 (\mysheara(w))^2
\end{align*}
We conclude the proof by setting $\beta_3 = \beta_4$ yielding the equivalence of $\vert \text{diag}({\bm \beta})\nabla_N \check{w}\vert$ and $\vert \text{diag}({\bm \beta})\nabla_N w\vert$, which then in turn implies $R_{\bm \beta}(\check{u})=R_{\bm \beta}(u)$.
\begin{align*}
\vert \text{diag}({\bm \beta})\nabla_N \check{w} \vert & \; = \beta_1 (\mycurl(w))^2 + \beta_2 (\mydiv(w))^2\\
& \qquad + \beta_3 a^2 (\mysheara(w))^2 + \beta_3 4 b^2 (\mysheara(w))^2\\
& \qquad + \beta_4 a^2 (\myshearb(w))^2 + \beta_4 4 b^2 (\myshearb(w))^2\\
& \; = \beta_1 (\mycurl(w))^2 + \beta_2 (\mydiv(w))^2\\
& \qquad + \beta_3 (\cos^2(\theta) + \sin^2(\theta))^2 (\mysheara(w))^2\\
& \qquad + \beta_4 (\cos^2(\theta) + \sin^2(\theta))^2 (\myshearb(w))^2\\
& \; = \beta_1 (\mycurl(w))^2 + \beta_2 (\mydiv(w))^2 + \beta_3 (\mysheara(w))^2 + \beta_4 (\myshearb(w))^2\\
& \; = \vert \text{diag}({\bm \beta})\nabla_N w \vert.
\end{align*}

\end{proof}

\section{Alternative visualisations of parts of Figures \ref{fig:SVF_compression}, \ref{fig:interpolation} and \ref{fig:discretisation}}
\label{app:alternative_visualisations}

\begin{figure}[h]
\captionsetup[subfigure]{labelformat=empty}
\centering
\subfloat[Fig.\ \ref{fig:SVF_compression}: $v$ in $x_1$-direction]{\includegraphics[width=.3\textwidth]{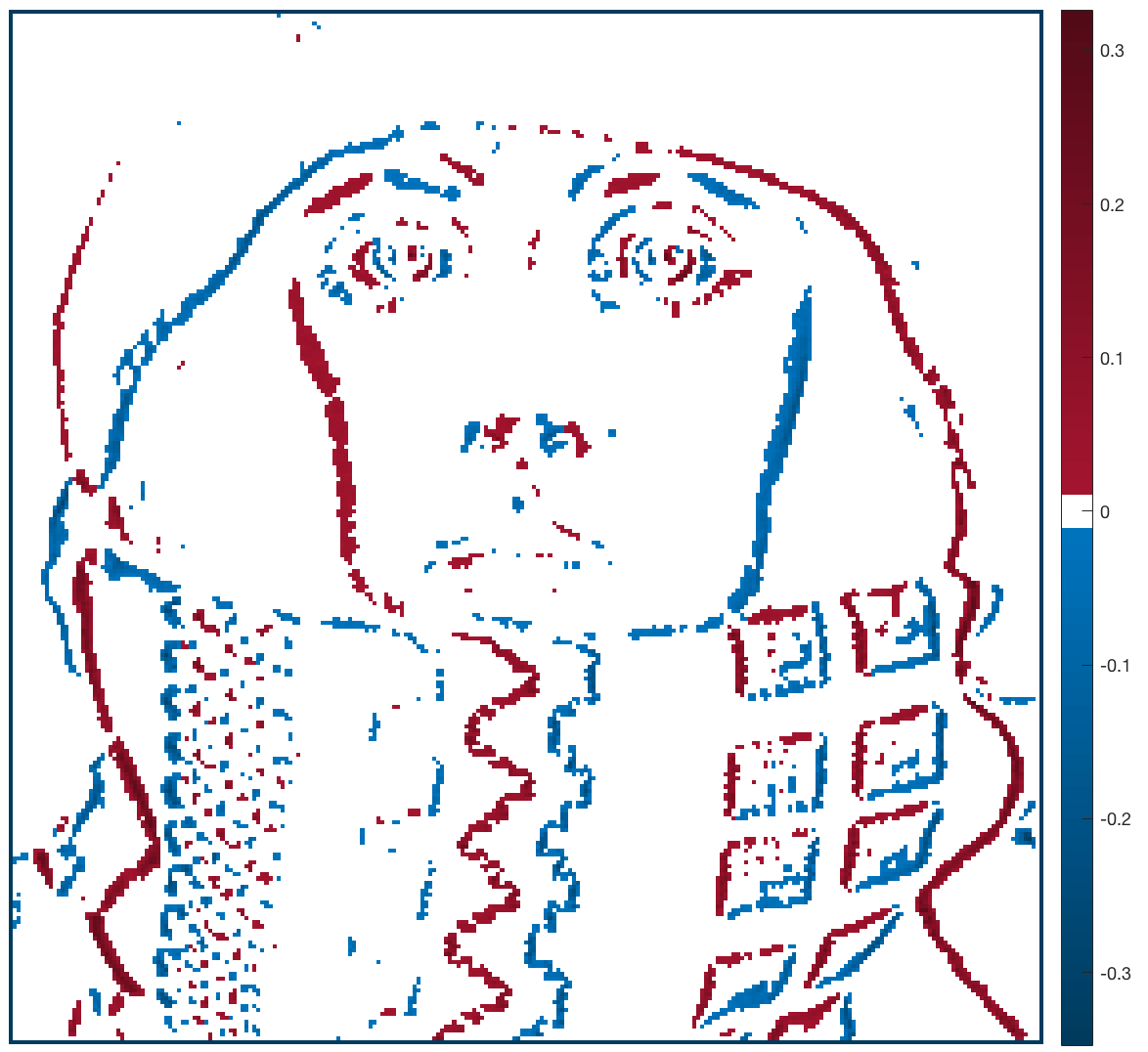}}\hfill
\subfloat[Fig.\ \ref{fig:SVF_compression}: $v$ in $x_2$-direction]{\includegraphics[width=.3\textwidth]{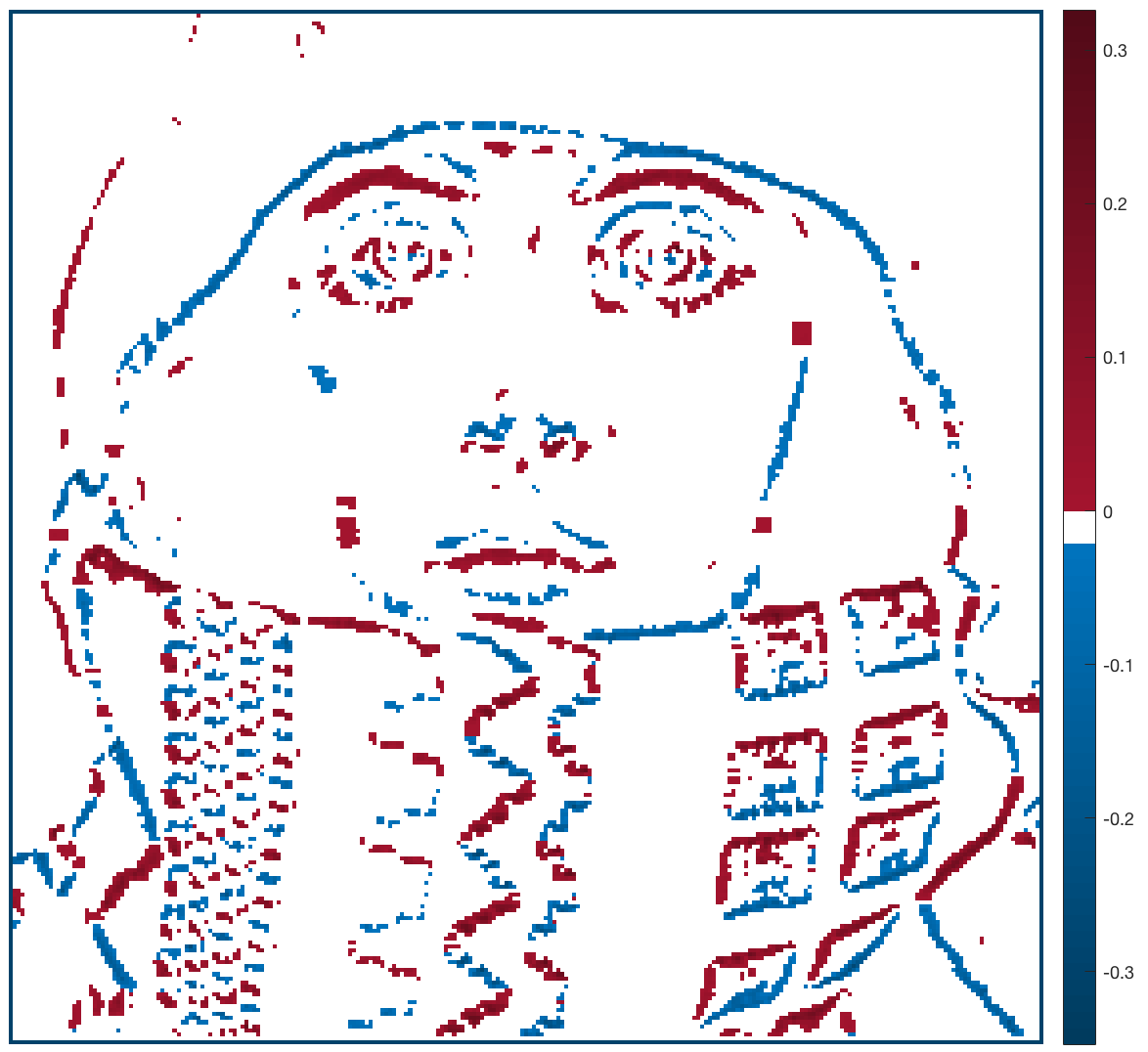}}\hfill
\subfloat[Fig.\ \ref{fig:SVF_compression}: $|v|$]{\includegraphics[width=.3\textwidth]{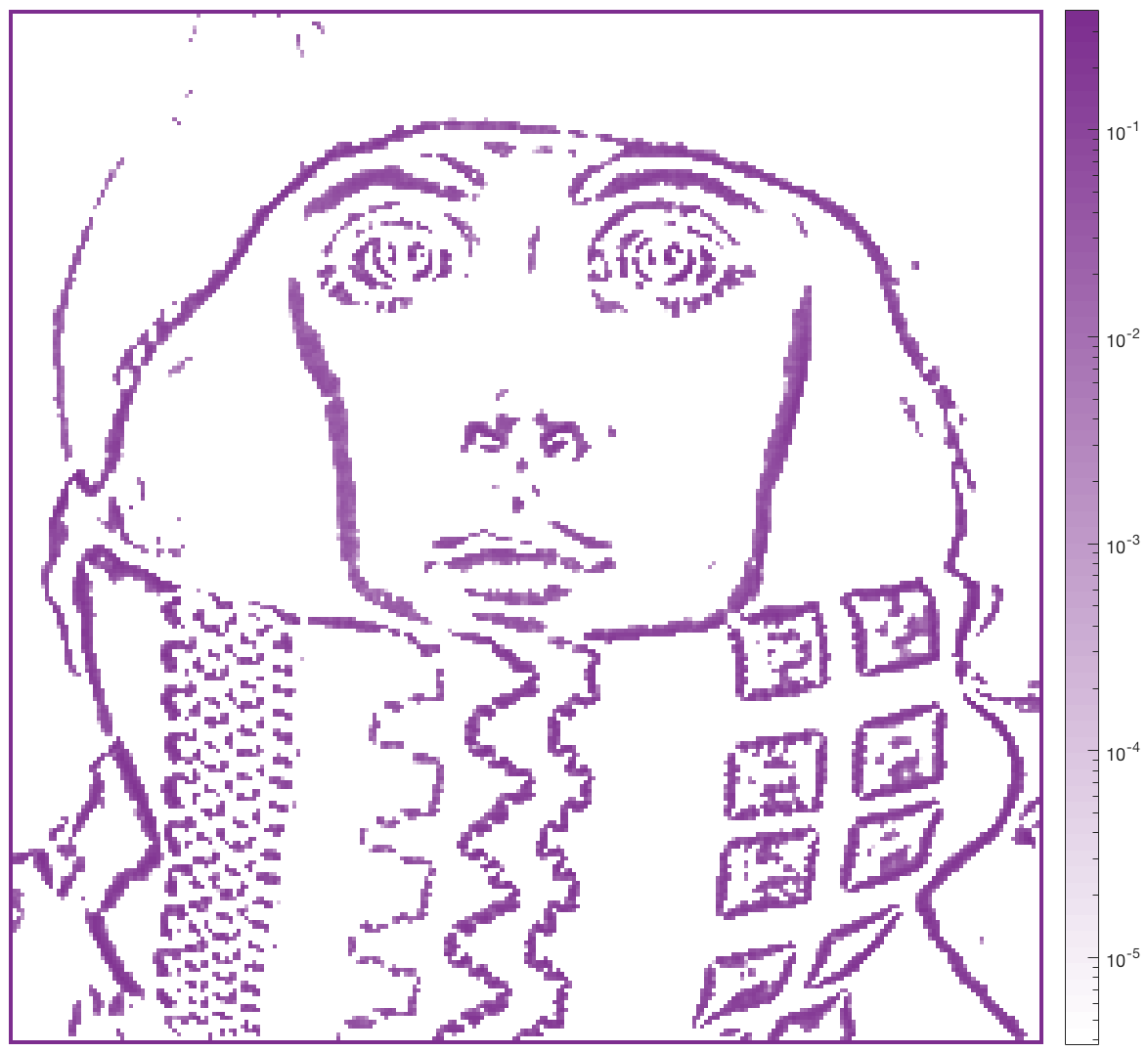}}
\end{figure}

\begin{figure}[h]
\captionsetup[subfigure]{labelformat=empty}
\centering
\subfloat[Fig.\ \ref{fig:interpolation}: TGV-type, $\mycurl$(w)]{\includegraphics[width=.3\textwidth]{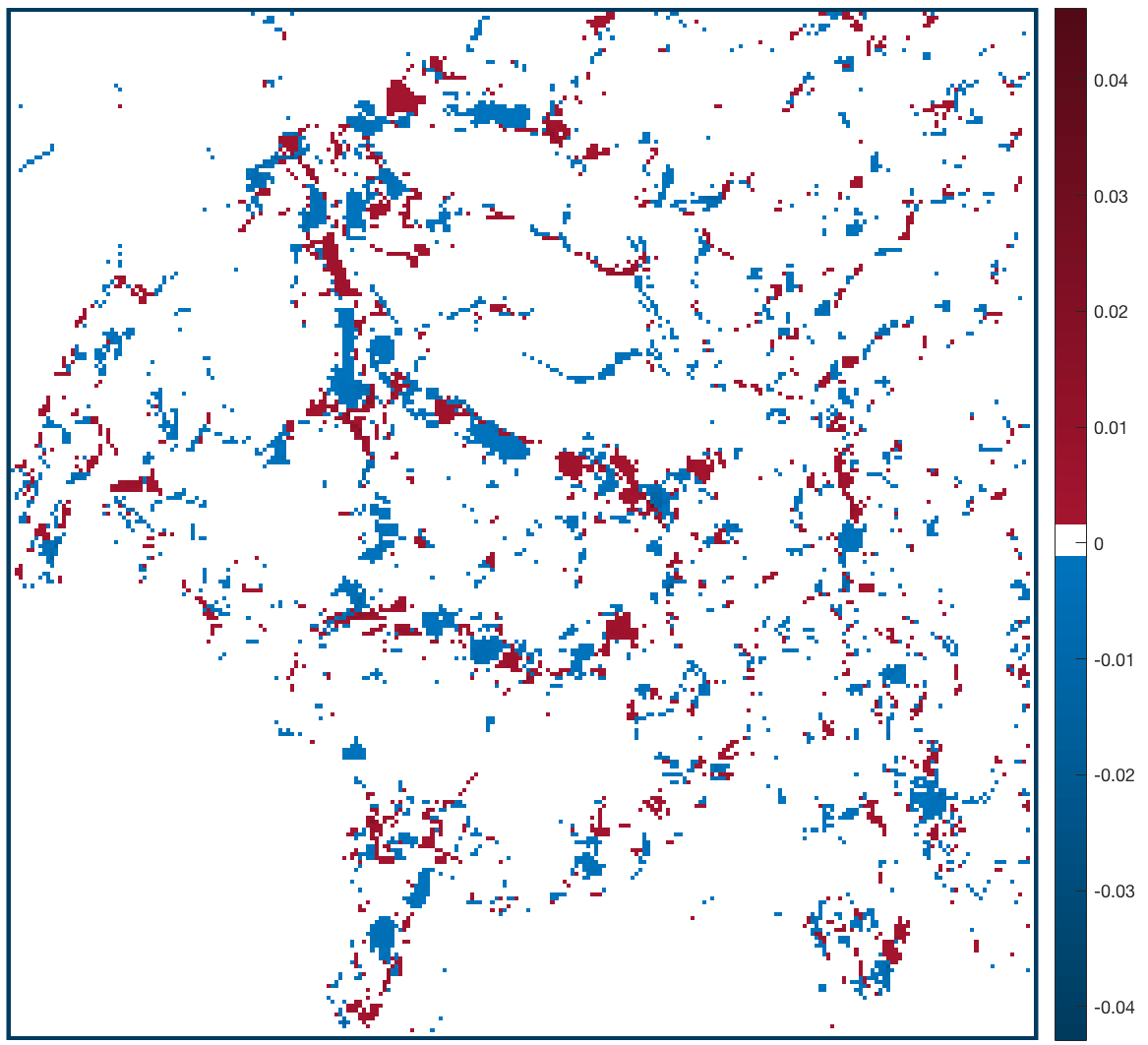}}\hfill
\subfloat[Fig.\ \ref{fig:interpolation}: interpolated, $\mycurl$(w)]{\includegraphics[width=.3\textwidth]{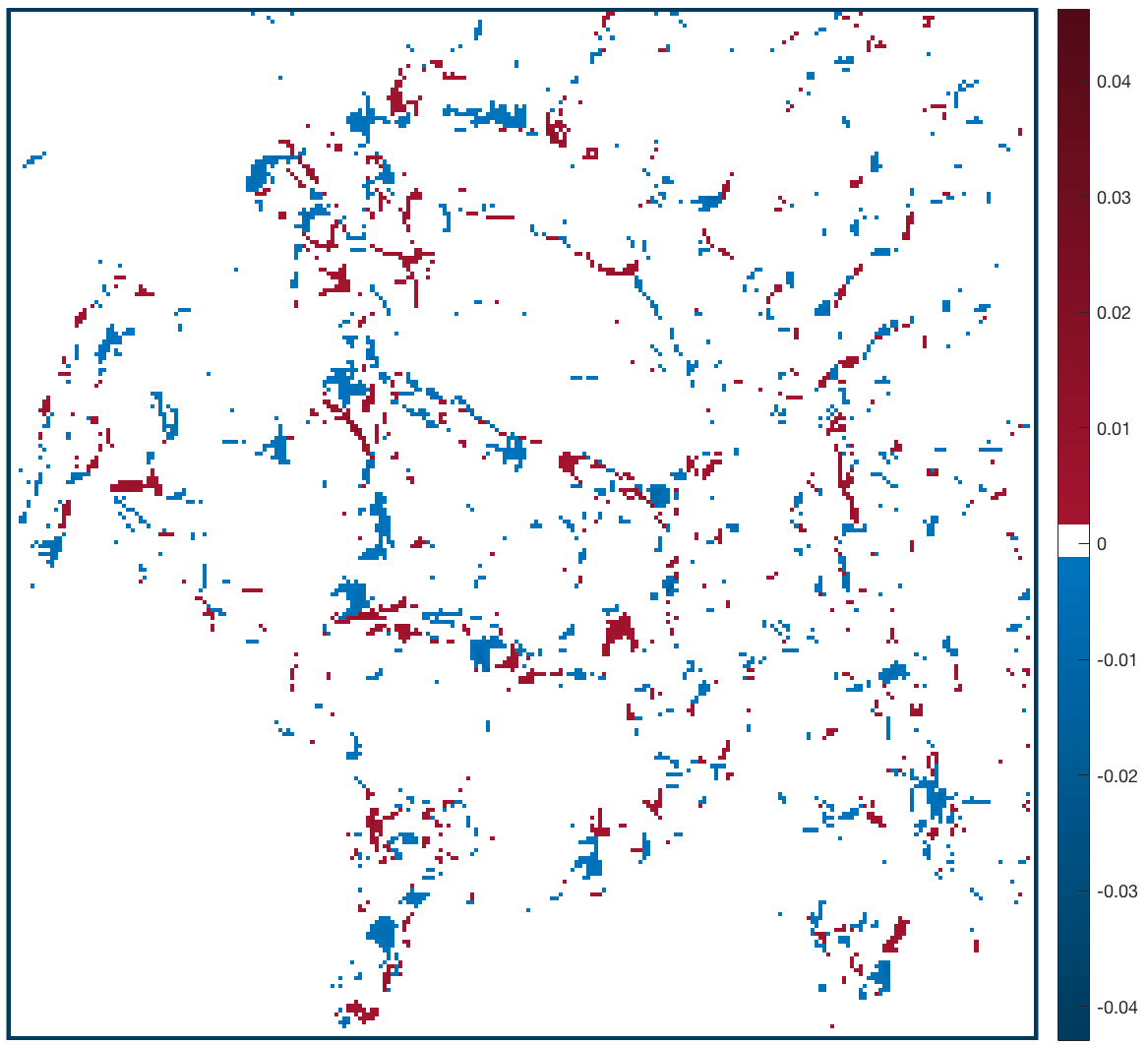}}\hfill
\subfloat[Fig.\ \ref{fig:interpolation}: ICTV-type, $\mycurl$(w)]{\includegraphics[width=.3\textwidth]{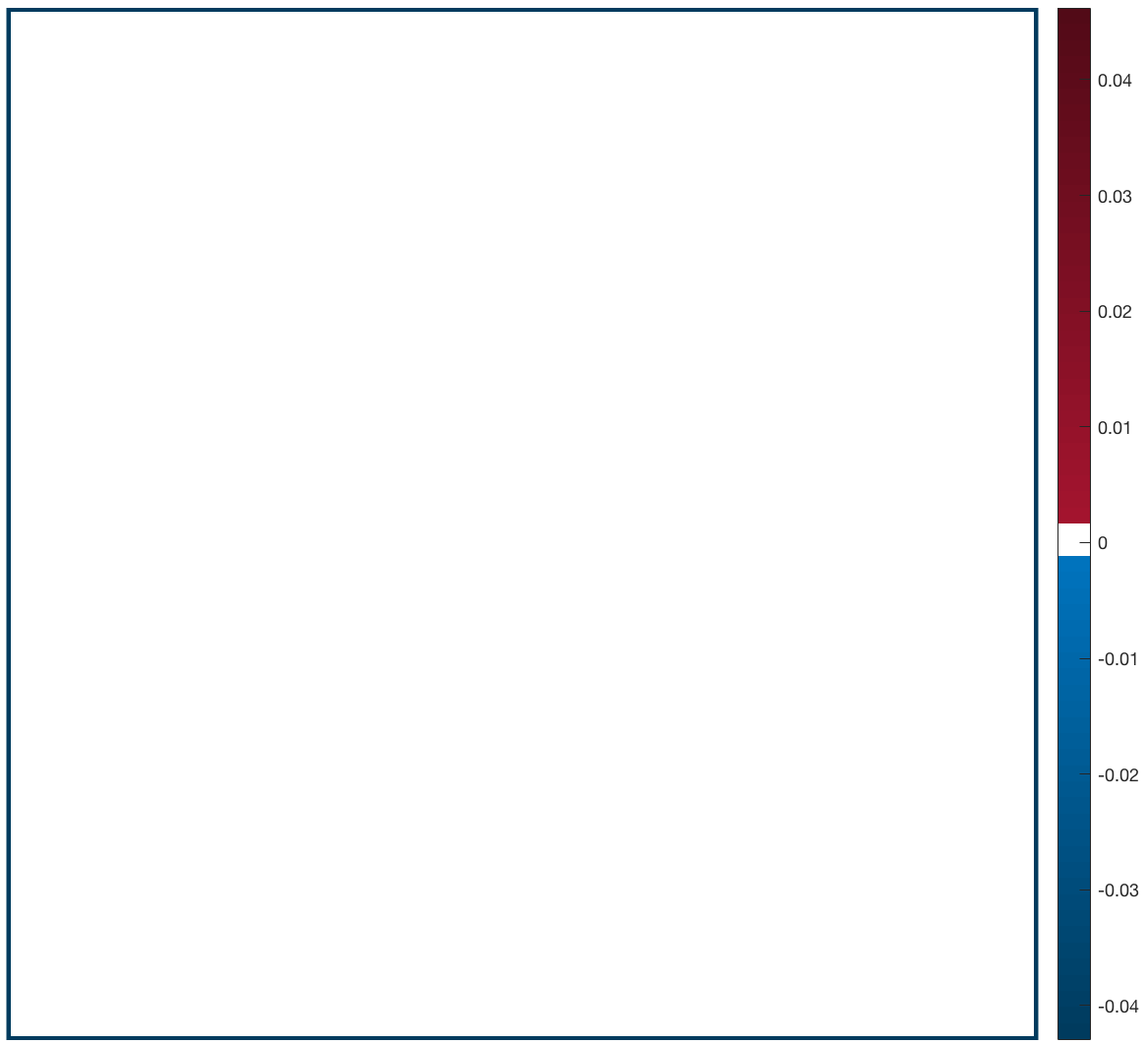}}\\
\subfloat[Fig.\ \ref{fig:interpolation}: TGV-type, $\mydiv$(w)]{\includegraphics[width=.3\textwidth]{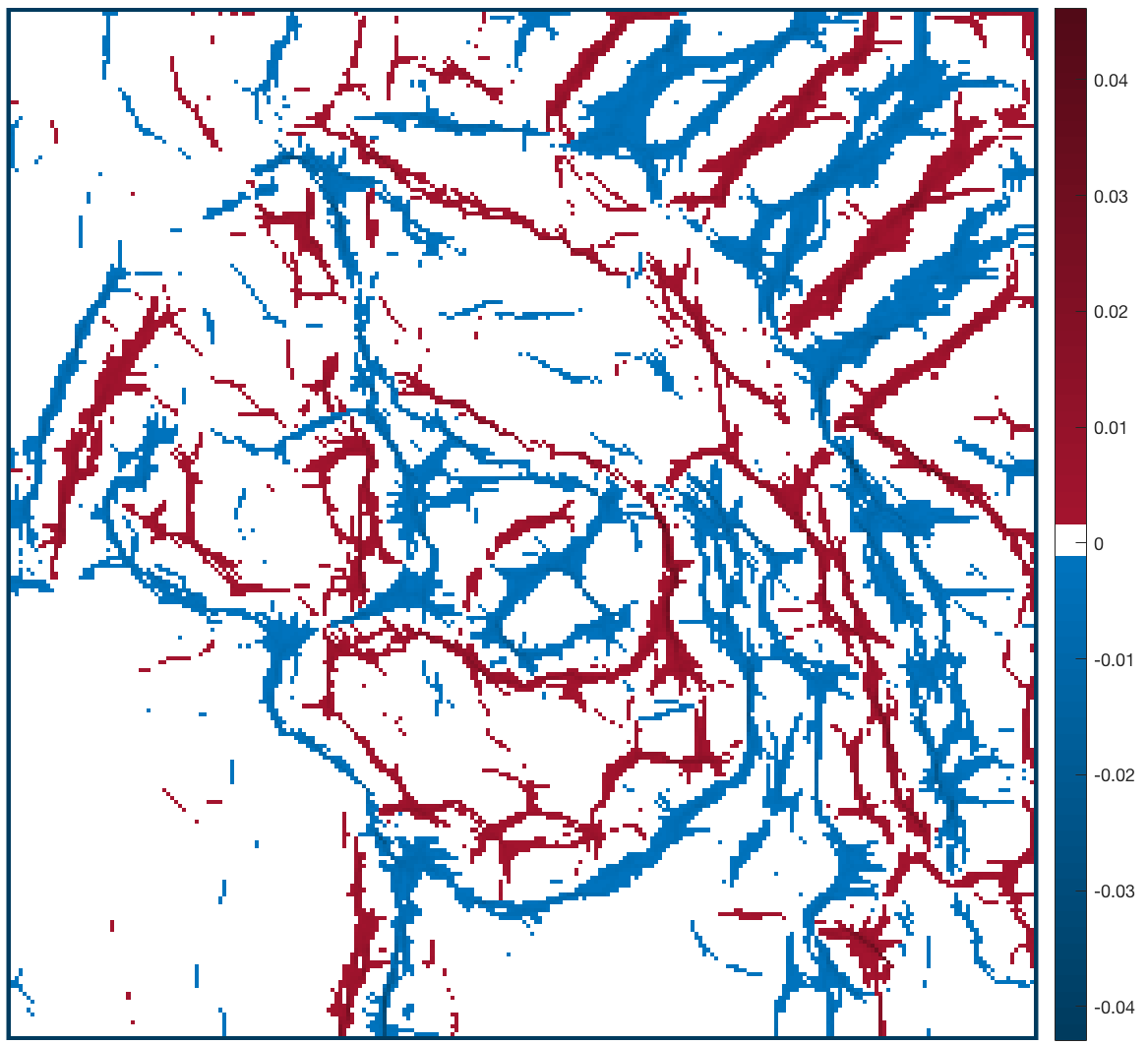}}\hfill
\subfloat[Fig.\ \ref{fig:interpolation}: interpolated, $\mydiv$(w)]{\includegraphics[width=.3\textwidth]{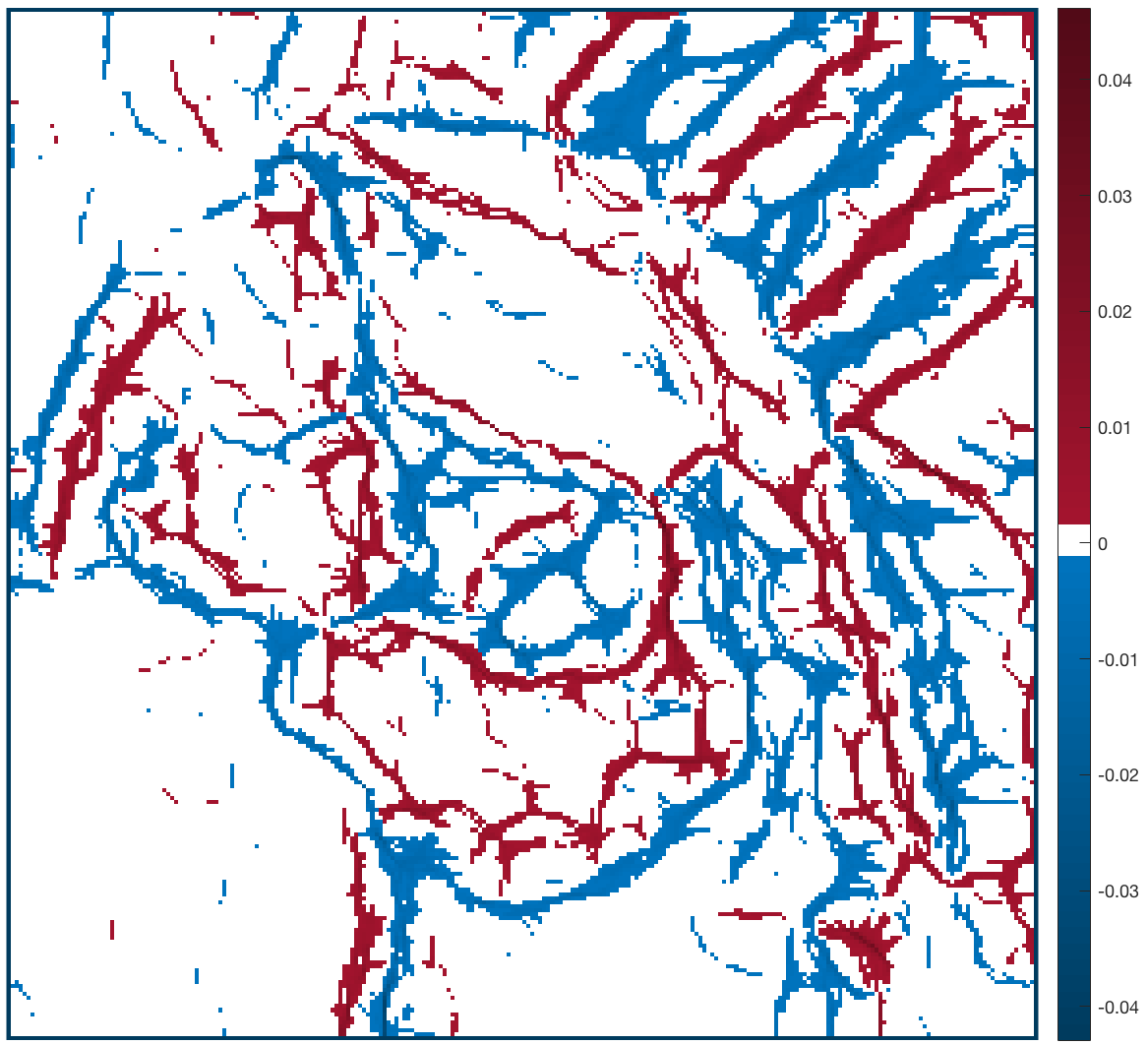}}\hfill
\subfloat[Fig.\ \ref{fig:interpolation}: ICTV-type, $\mydiv$(w)]{\includegraphics[width=.3\textwidth]{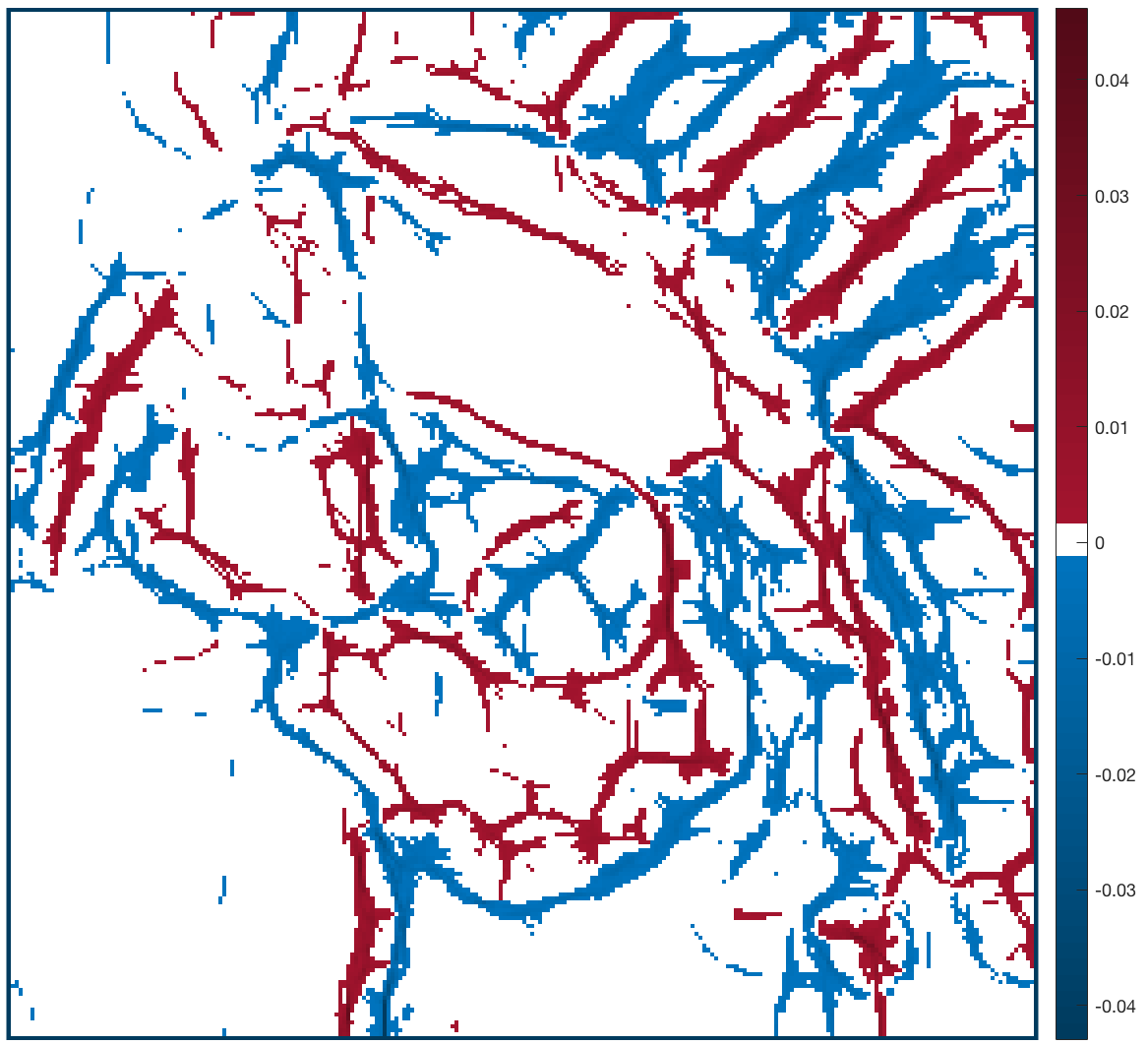}}\\
\subfloat[Fig.\ \ref{fig:interpolation}: TGV-type, $\mysheara$(w)]{\includegraphics[width=.3\textwidth]{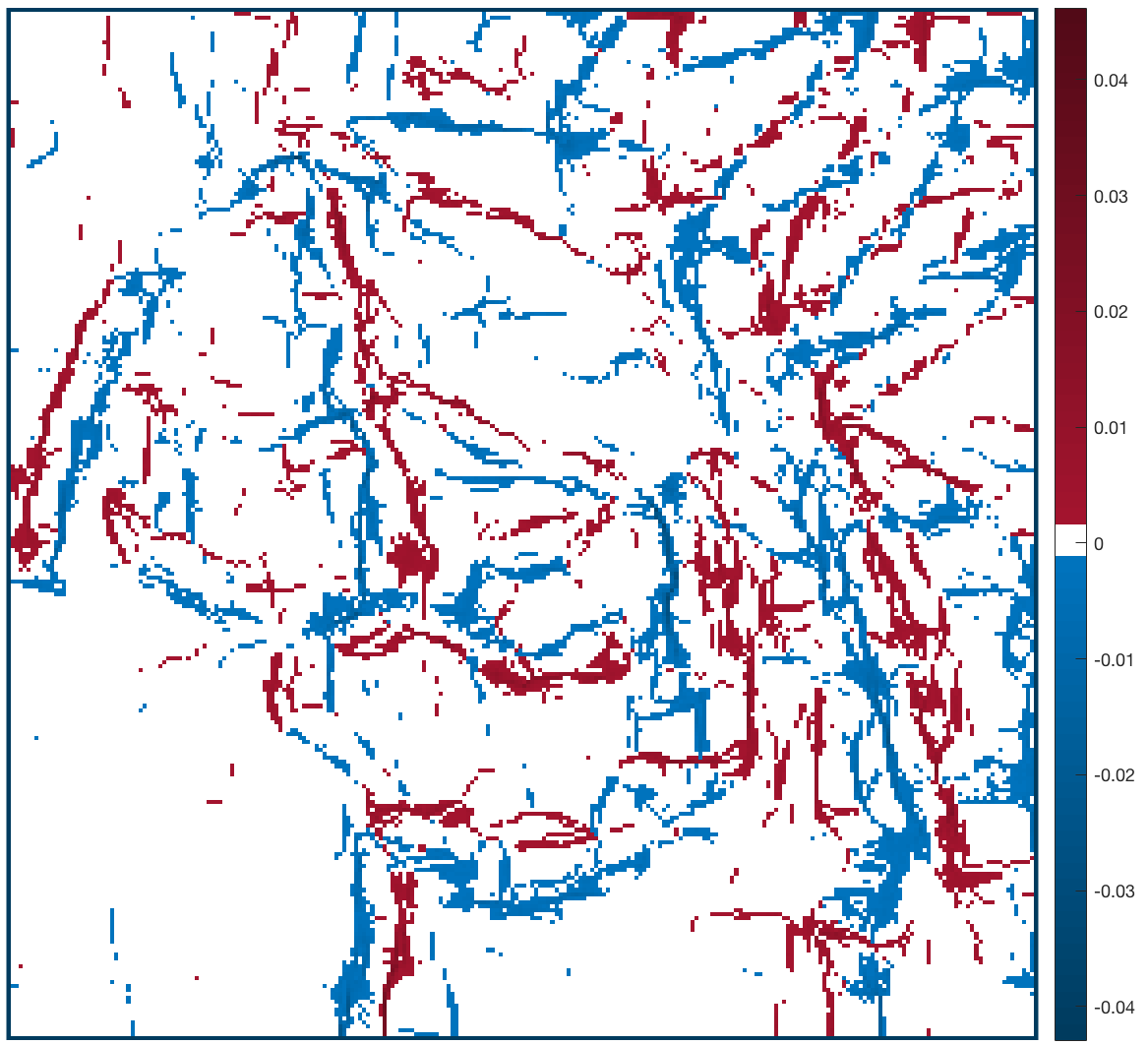}}\hfill
\subfloat[Fig.\ \ref{fig:interpolation}: interpolated, $\mysheara$(w)]{\includegraphics[width=.3\textwidth]{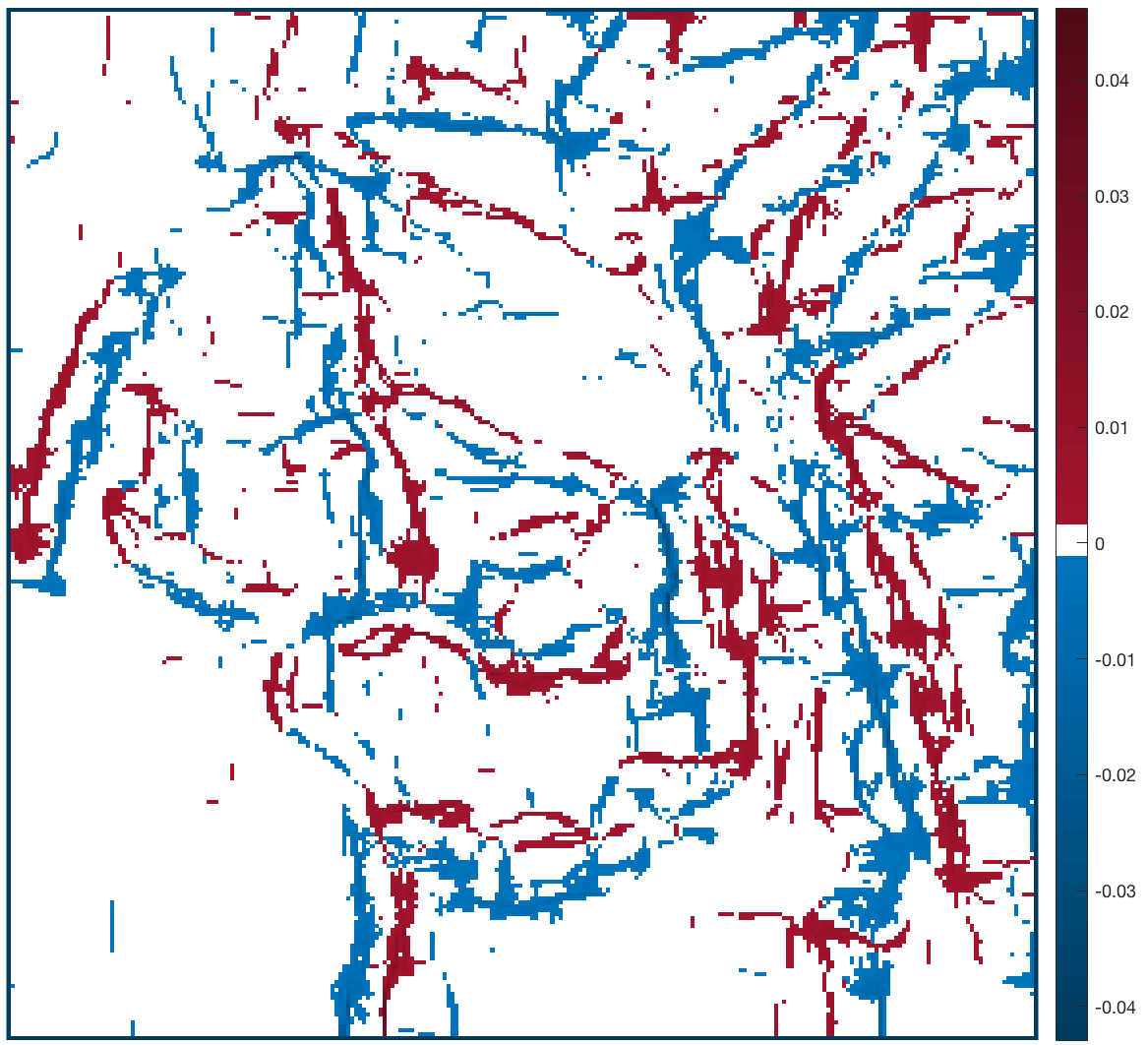}}\hfill
\subfloat[Fig.\ \ref{fig:interpolation}: ICTV-type, $\mysheara$(w)]{\includegraphics[width=.3\textwidth]{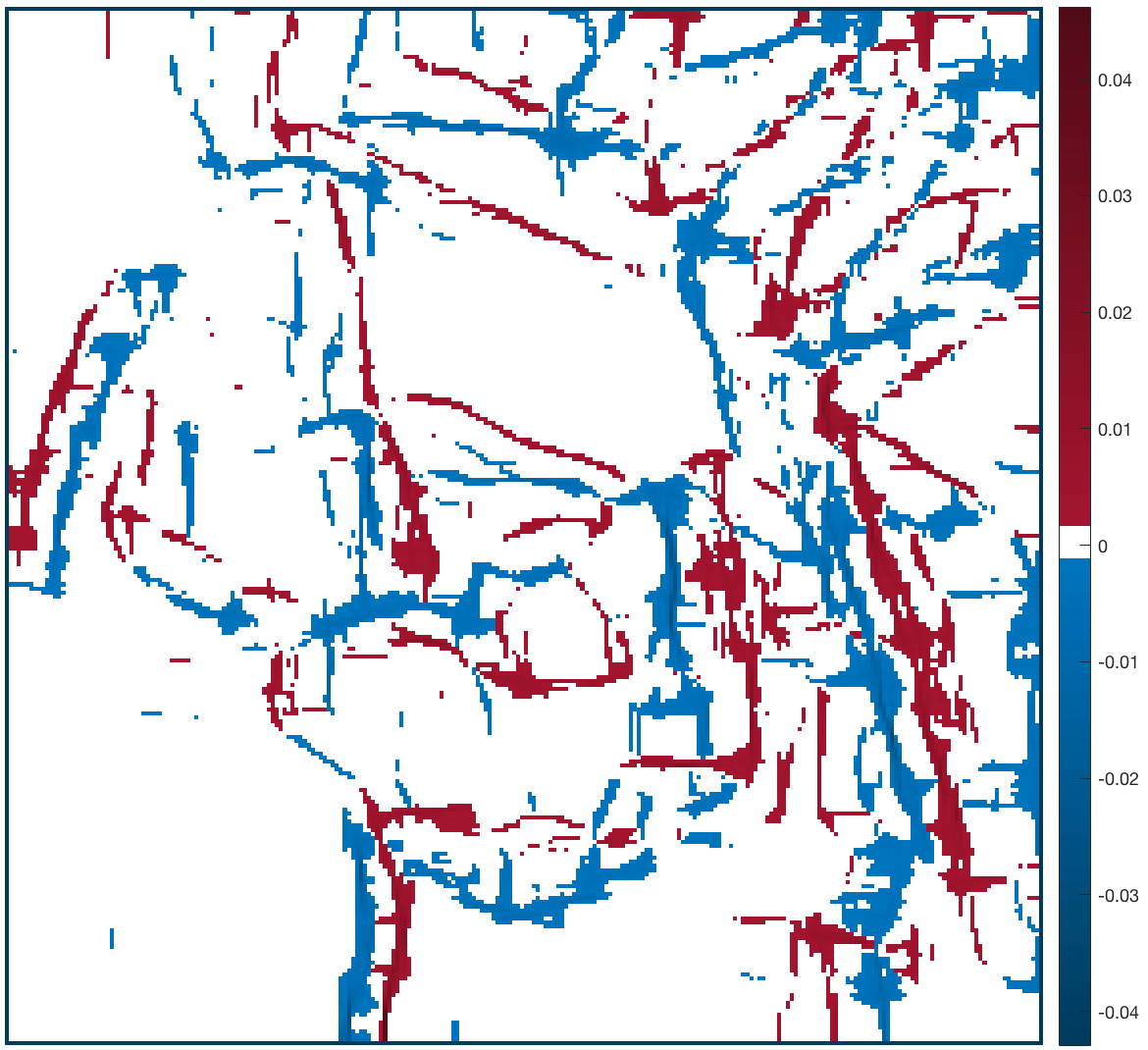}}\\
\subfloat[Fig.\ \ref{fig:interpolation}: TGV-type, $\myshearb$(w)]{\includegraphics[width=.3\textwidth]{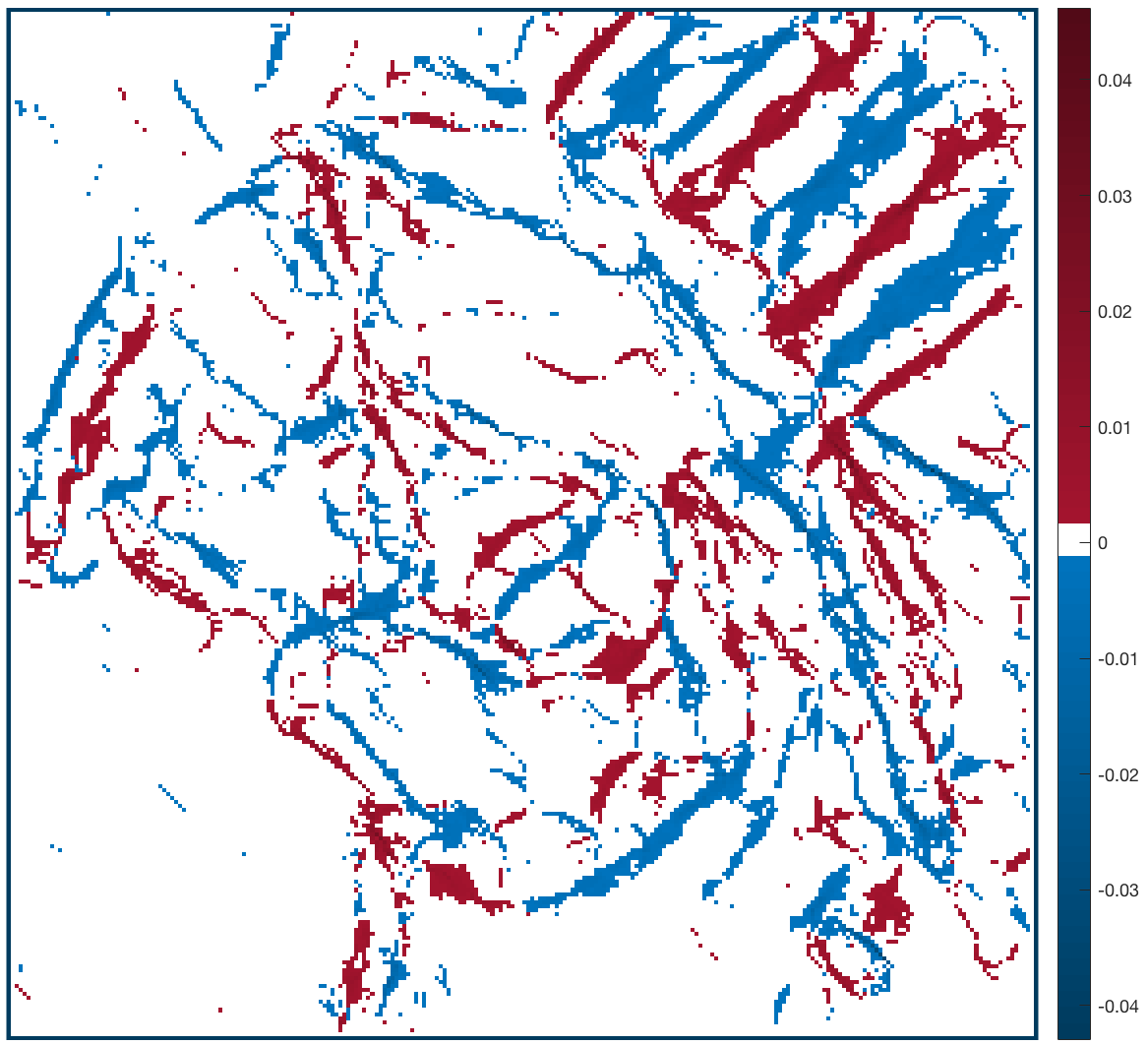}}\hfill
\subfloat[Fig.\ \ref{fig:interpolation}: interpolated, $\myshearb$(w)]{\includegraphics[width=.3\textwidth]{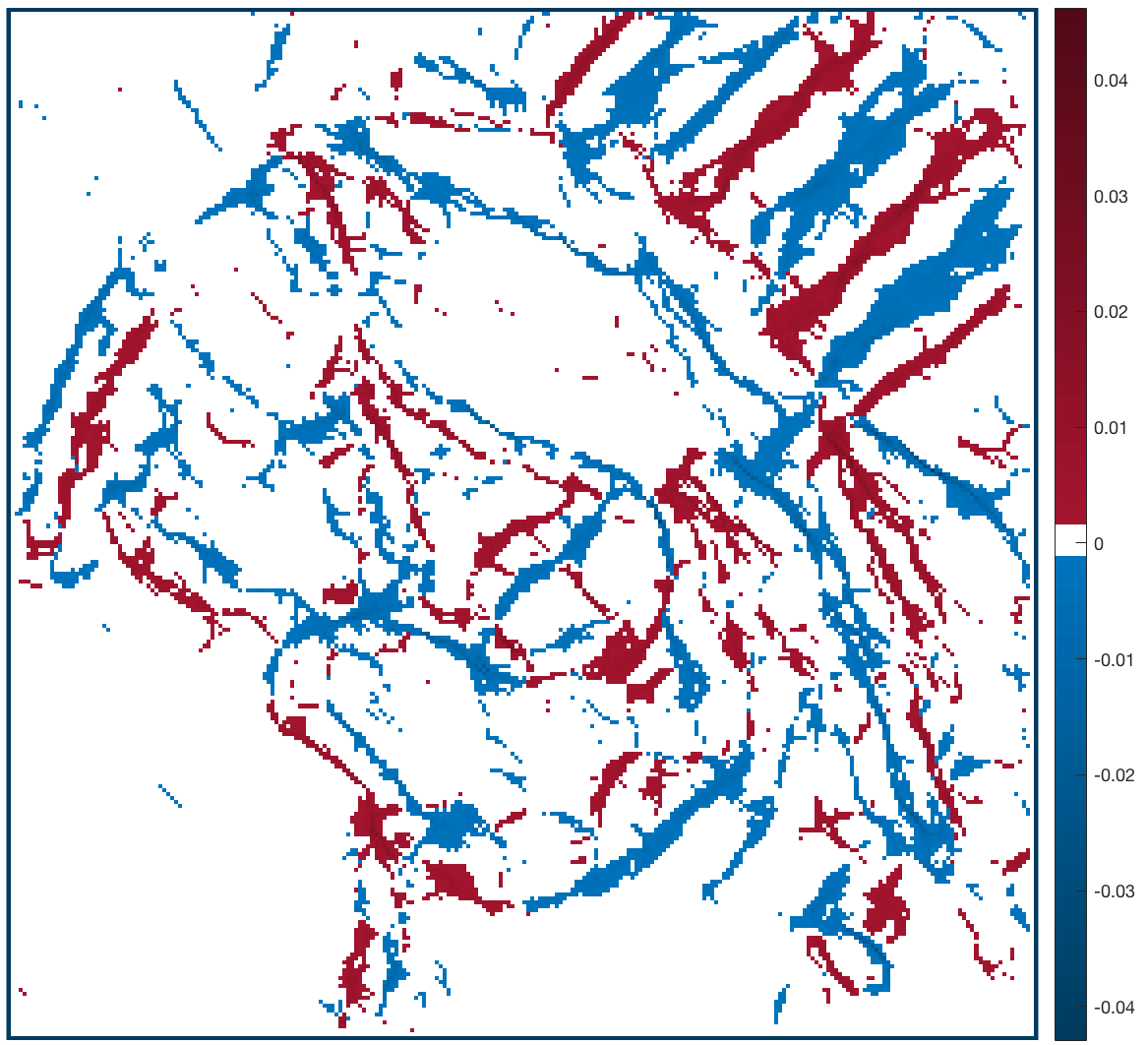}}\hfill
\subfloat[Fig.\ \ref{fig:interpolation}: ICTV-type, $\myshearb$(w)]{\includegraphics[width=.3\textwidth]{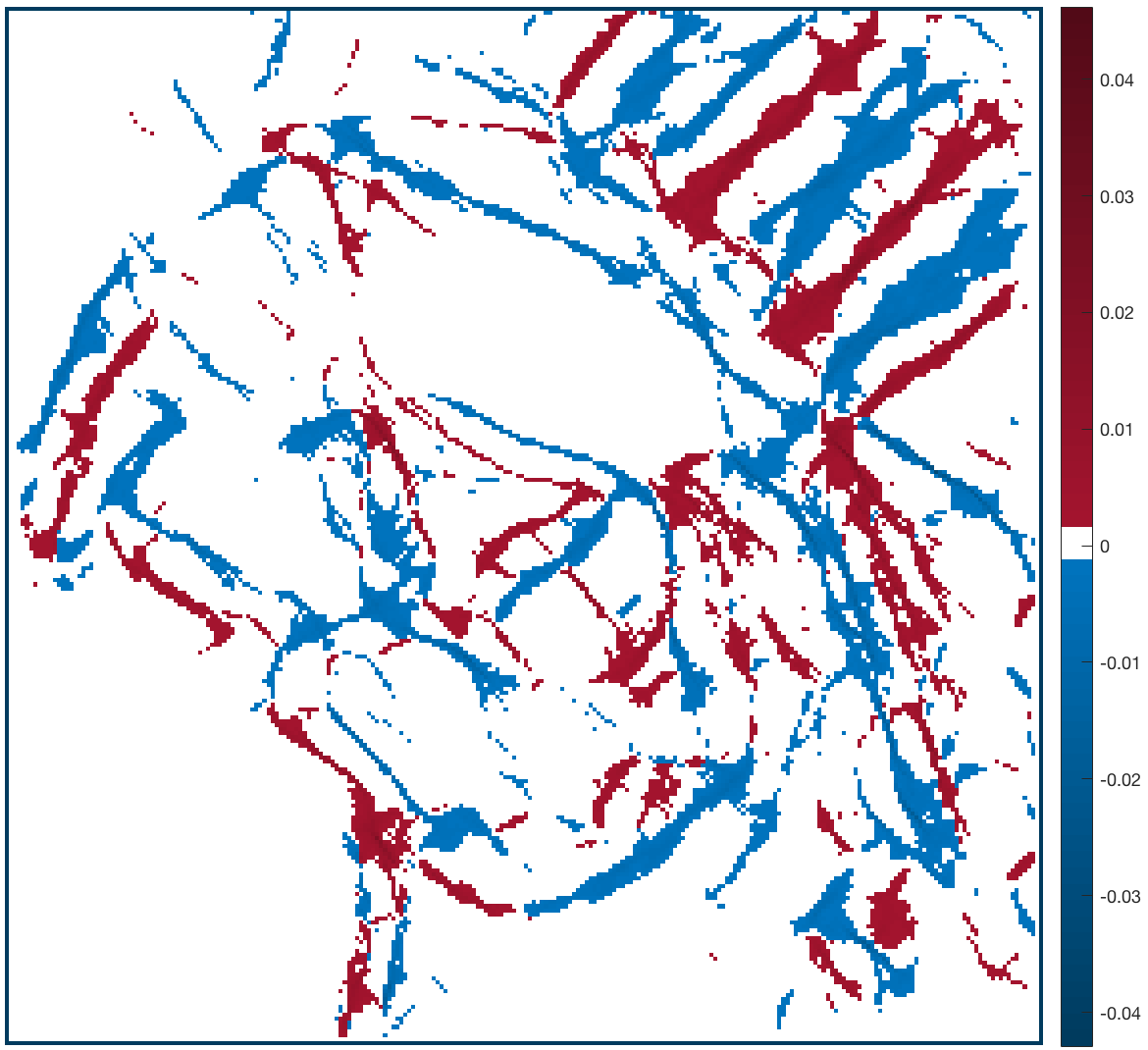}}
\end{figure}

\begin{figure}[h]
\captionsetup[subfigure]{labelformat=empty}
\centering
\subfloat[Fig.\ \ref{fig:discretisation}: difference image, $500 \times 500$ pixels]{\includegraphics[width=.45\textwidth]{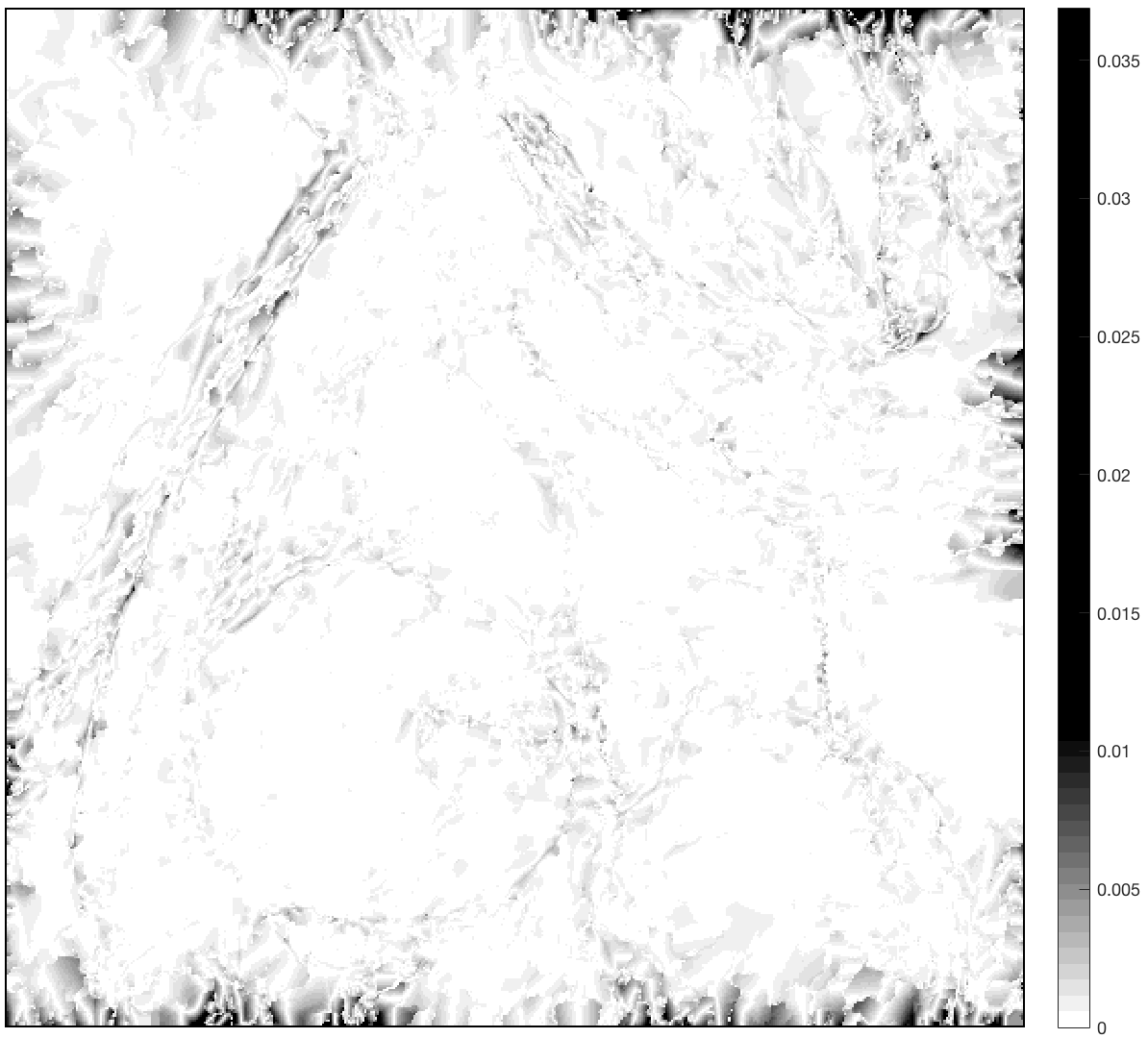}}\hfill
\subfloat[Fig.\ \ref{fig:discretisation}: difference image, $250 \times 250$ pixels]{\includegraphics[width=.45\textwidth]{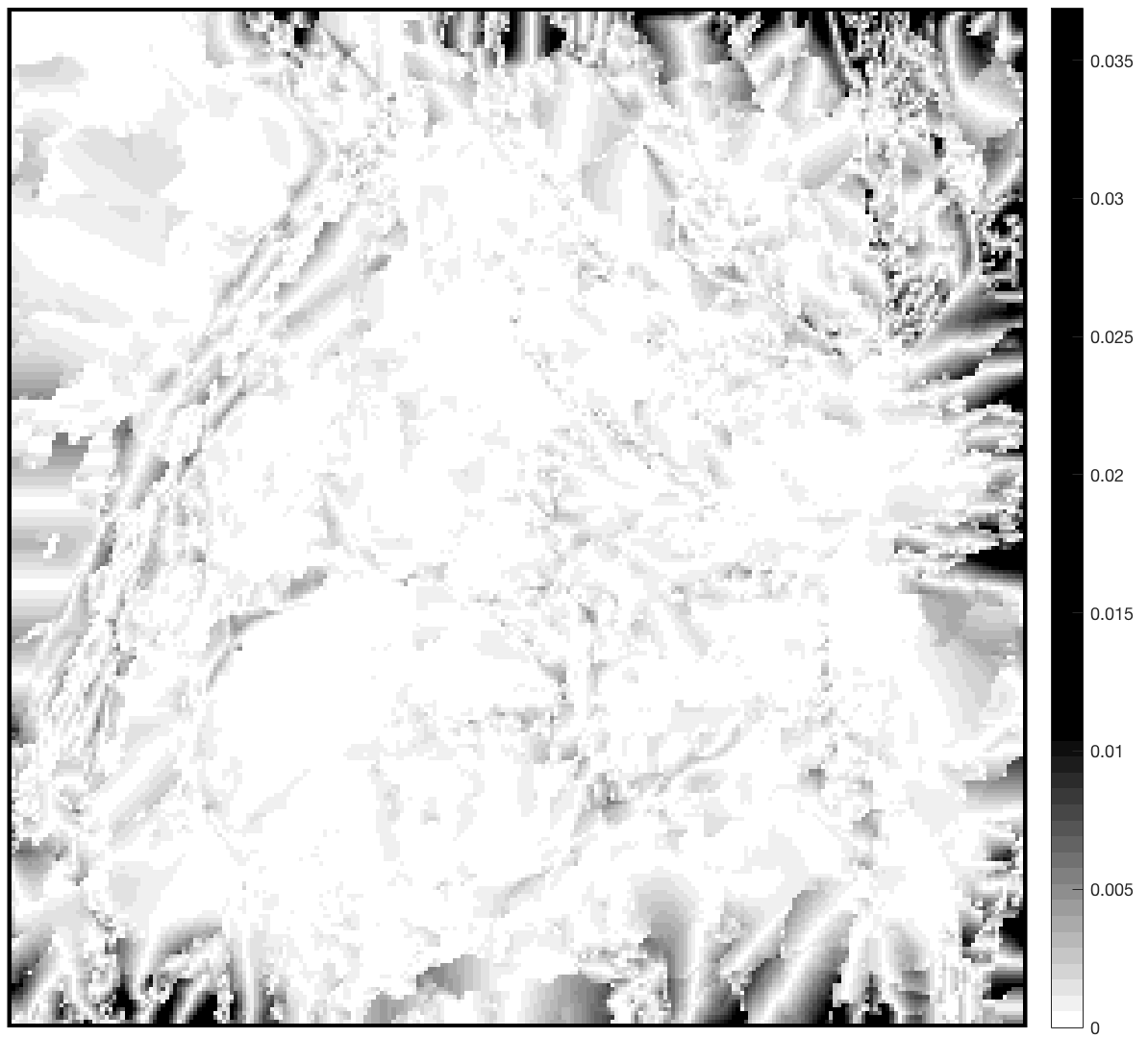}}
\end{figure}

\end{document}